\documentclass[11pt,reqno]{amsart}
\usepackage[margin=2.5cm]{geometry}
\allowdisplaybreaks
\parskip=1ex 

\usepackage[english]{babel}
\usepackage[latin1]{inputenc}
\usepackage[T1]{fontenc}
\usepackage{amssymb,amsmath,amstext,amsfonts,amsthm,mathrsfs}
\usepackage{mathtools}
\usepackage{relsize}
\usepackage[colorlinks=true, urlcolor=blue, linkcolor=blue, citecolor=blue]{hyperref}
\usepackage{todonotes}
\usepackage[normalem]{ulem}
\usepackage{graphicx}
\usepackage{arydshln}
\usepackage{bm}
\usepackage{cases}
\usepackage{caption}
\usepackage{subcaption}
\usepackage{algorithm,algpseudocode}

\definecolor{OliveGreen}{rgb}{0,0.6,0}

\numberwithin{equation}{section}
\theoremstyle{plain}
\newtheorem*{theorem*}{Theorem}
\newtheorem{theorem}{Theorem}
\numberwithin{theorem}{section}
\newtheorem{proposition}[theorem]{Proposition}
\newtheorem{lemma}[theorem]{Lemma}
\newtheorem{corollary}[theorem]{Corollary}

\newtheorem{remark}[theorem]{Remark}
\newtheorem{assumption}[theorem]{Assumption}

\theoremstyle{definition}

\newcommand{\C}{\mathbb{C}}
\newcommand{\E}{\mathbb{E}}

\newcommand{\Q}{\mathbb{Q}}

\newcommand{\R}{\mathbb{R}}
\newcommand{\sA}{\mathcal{A}}

\newcommand{\sF}{\mathcal{F}}

\newcommand{\sL}{\mathcal{L}}
\newcommand{\sT}{\mathcal{T}}
\newcommand{\sW}{\mathcal{W}}

\newcommand{\sX}{\mathcal{X}}

\newcommand{\pId}{\pmb{\operatorname{Id}}}

\DeclareMathOperator{\Tr}{Tr}
\DeclareMathOperator{\nF}{F}

\def \A {\pmb A}

\def \D {\pmb D}

\def \N {\pmb N}
\def \M {\pmb M}

\def \Q {\pmb Q}

\def \D {\pmb D}
\def \C {\pmb C}

\def \u {\pmb u}
\def \v {\pmb v}

\def \x {\pmb x}

\newcommand\scalemath[2]{\scalebox{#1}{\mbox{\ensuremath{\displaystyle #2}}}}

\makeatletter
\@namedef{subjclassname@2020}{%
	\textup{2020} MSC2020 subject classifications: Primary 60B20; secondary 15B52}
\makeatother

\author[Y. Qi]{Yang Qi}
\address{Mathematical and Algorithmic Sciences Lab, Huawei France R\&D, Paris, France}
\email{yang.qi@inria.fr}

\author[A. Decurninge]{Alexis Decurninge}
\address{Mathematical and Algorithmic Sciences Lab, Huawei France R\&D, Paris, France}
\email{alexis.decurninge@huawei.com}

\subjclass[2020]{}
\keywords{random tensor theory, random matrix theory, low-rank approximation, multi-spiked tensor PCA, multi-index model}

\title[Low-rank random tensors]{Statistical Limits in Random Tensors with Multiple Correlated Spikes}

\date{}

\begin{document}

\begin{abstract}

We use tools from random matrix theory to study the multi-spiked tensor model, i.e., a rank-$r$ deformation of a symmetric random Gaussian tensor. In particular, thanks to the nature of local optimization methods used to find the maximum likelihood estimator of this model, we propose to study the phase transition phenomenon for finding critical points of the corresponding optimization problem, i.e., those points defined by the Karush-Kuhn-Tucker (KKT) conditions. 
Moreover, we characterize the limiting alignments between the estimated signals corresponding to a critical point of the likelihood and the ground truth signals. With the help of these results, we propose a new estimator of the rank-$r$ tensor weights by solving a system of polynomial equations, which is asymptotically unbiased contrary the maximum likelihood estimator.

\end{abstract}

\maketitle

\section{Introduction}

In modern statistics, machine learning and signal processing, data often appear in an extremely high dimensional way. To overcome the computational challenges due to high dimensionality, one is interested in developing an observation model to describe or approximate the data set such that the model is as simple as possible but can satisfy simultaneously the following four purposes, i.e., dimension reduction, extracting the most useful information, detecting latent variable structures, and convenience for denoising. In numerous scenarios, high-dimensional data are given in a higher-order multiway form and therefore are also called tensor data~\cite{Comon14, CMDZZCP15, SDFHPF17}. For such a real-valued (symmetric) order-$d$ data tensor $\mathcal{T}$ of size $N\times \dots\times N$, a natural statistical observation model is given for $1\leq i_1,\dots,i_d\leq N$ by
\[
\mathcal{T}_{i_1\dots i_d} = \sum_{j=1}^r \beta_j u^{(j)}_{i_1} \cdots u^{(j)}_{i_d} + \frac{1}{\sqrt{N}} \mathcal{X}_{i_1\dots i_d} \enspace,
\]
with $\mathcal{X}$ Gaussian noise, or in tensor notation, 
\begin{equation}\label{eq: symPCA}
\mathcal{T} = \sum_{j=1}^r \beta_j \pmb{u}_j^{\otimes d} + \frac{1}{\sqrt{N}} \sX \enspace,
\end{equation}
where all unit vectors $\pmb{u}_j = (u^{(j)}_1, \dots, u^{(j)}_N)$, together with the weights $\beta_j$, are to be inferred from the noisy multi-dimensional measurements. Equation~\eqref{eq: symPCA} is called the \emph{noisy symmetric rank-$r$ model} or \emph{multi-spiked tensor model}.
We will assume that the rank-$r$ signal in~\eqref{eq: symPCA} characterized by $\pmb{u}_j$ and $\beta_j$, namely $\sum_{j=1}^r \beta_j \pmb{u}_j^{\otimes d}$, is deterministic. Therefore, only the noise tensor $\sX$ is random and the maximum-likelihood estimator of the signal is given by a solution of the following optimization problem
\begin{equation}\label{lowrkopt}
\begin{aligned}
& \underset{\gamma_j, \pmb{v}_j}{\text{minimize}}
& & \left\lVert \mathcal{T} - \sum_{j=1}^r \gamma_j \pmb{v}_j^{\otimes d} \right\rVert_{\nF} \\
& \text{subject to}
& & \lVert \pmb{v}_1 \rVert_2 = \cdots = \lVert \pmb{v}_r \rVert_2 = 1.
\end{aligned}
\end{equation}
From the low-rank approximation perspective, solving~\eqref{lowrkopt} for the symmetric rank-$r$ model~\eqref{eq: symPCA} can be viewed as a higher-dimensional generalization of the classical Principal Component Analysis. Unlike the matrix case, solving Equation~\eqref{lowrkopt} exactly is at least NP-hard \cite{HL13}. As a special case, the rank-one model, i.e., when $r = 1$, is also called the spiked tensor model and was introduced by Montanari and Richard for tensor Principal Component Analysis (tensor PCA) \cite{richard2014statistical}. In general, a best rank-$r$ approximation of $\mathcal{T}$, i.e., an optimal solution of~\eqref{lowrkopt}, cannot be obtained by performing a tensor deflation, i.e. consecutively computing and subtracting best rank-one approximations like the matrix case \cite{SC10, VNVM14}. In fact, the asymptotic behavior of such tensor deflation has been analyzed in \cite{SGG23} using random matrix tools. However, these results cannot be applied directly to the rank-$r$ tensor decomposition and it is necessary to study Model~\eqref{eq: symPCA} as a whole, which motivates this paper.

In addition to the low-rank approximation problems arising from data science, another source of motivation comes from the study of the phase transition phenomenon of matrices. More precisely, for the matrix case, given an $N\times N$ symmetric rank-$r$ matrix $\pmb{B}_N$ and a noise matrix $\pmb{X}_N$, we are interested in how the eigenvalues and eigenvectors of
\begin{equation}\label{eq:lrkmatBBP}
\pmb{M}_N = \pmb{B}_N + \pmb{X}_N
\end{equation}
are related to the eigenvalues and eigenvectors of $\pmb{B}_N$ and $\pmb{X}_N$. An important feature of Model~\eqref{eq:lrkmatBBP} was pointed out in the seminal work~\cite{BBP05} by Baik, Ben Arous, and P{\'e}ch{\'e}, namely the BBP phase transition phenomenon. They showed in this work the existence of a phase transition threshold, expressed in function of a signal-to-noise ratio, above which the statistical strong detection is possible and below which it is impossible to detect the spikes \cite{onatski2013asymptotic, montanari2015limitation, perry2018optimality}. See also~\cite{johnstone2001distribution, Peche06, baik2006eigenvalues, feral2007largest, paul2007asymptotics, el2008spectrum, CDMF09, johnstone2009consistency, BGN11, bai2012sample, benaych2012singular, ledoit2012nonlinear, birnbaum2013minimax, cai2013sparse, ma2013sparse, vu2013minimax, cai2015optimal, donoho2018optimal, johnstone2018pca} for more work in this direction. A natural question is to investigate similar questions for tensors.

However, different from the matrix case, for tensors, we may have three types of transition thresholds, namely the information-theoretic threshold, statistical thresholds, and algorithmic thresholds. More precisely, when $r = 1$, i.e., in the tensor PCA model~\cite{richard2014statistical}, Model~\eqref{eq: symPCA} takes the form
\begin{equation}\label{eq: symPCArk1}
\sT = \beta\, \u^{\otimes d} + \frac{1}{\sqrt{N}} \sX.
\end{equation}
The information-theoretic threshold $\beta_{\operatorname{IT}}$ is the one such that when $\beta > \beta_{\operatorname{IT}}$ one can possibly detect the spiked signal $\beta \u^{\otimes d}$ and it is not possible to detect this signal when $\beta < \beta_{\operatorname{IT}}$. Once we pick a statistical procedure to estimate $\u$, i.e., once we determine the optimization problem, we have the corresponding statistical transition threshold. For instance, for MLE, the statistical threshold $\beta_{\operatorname{stat}, \operatorname{MLE}}$ is the threshold above which the MLE estimator is weakly correlated with $\u$ and below which the MLE estimator is not correlated with $\u$. In practice, given an optimization problem, instead of assuming unlimited computational resources, we need to design a polynomial-time algorithm and consider its algorithmic threshold $\beta_{\operatorname{Algo}}$, i.e., when $\beta > \beta_{\operatorname{Algo}}$ the algorithm detects the signal and when $\beta < \beta_{\operatorname{Algo}}$ the algorithm cannot find a solution correlated with $\u$. In addition, more subtle thresholds have been proposed, for instance, the statistical weak detection threshold, algorithmic weak detection threshold, and weak recovery threshold. We invite the interested readers to refer to~\cite{PWB20} for more details.


\subsection{Related work}

When $r = 1$, it is known that $\beta_{\operatorname{IT}} = \beta_{\operatorname{stat}, \operatorname{MLE}}$~\cite{chen2019phase, jagannath2020statistical}. Moreover, in~\cite{chen2019phase, jagannath2020statistical}, the sharp information-theoretic threshold is settled for a uniform spherical prior or a Rademacher prior $\u$. The detection and recovery thresholds for other priors, such as spherical and sparse Rademacher, have been studied in~\cite{PWB20}. In~\cite{richard2014statistical}, Montanari and Richard provide three polynomial-time algorithms. More precisely, they show that tensor unfolding provides a polynomial-time algorithm when $\beta \gtrsim N^{(\lceil d/2 \rceil - 1)/2}$, power iteration succeeds when $\beta \gtrsim N^{(d - 1)/2}$, and Approximate Message Passing (AMP) is effective. In~\cite{ben2023long} it is confirmed that the algorithmic threshold for tensor unfolding is $N^{(d-2)/4}$. Moreover, the algorithmic threshold for first-order optimization methods has been shown to be $N^{(d-2)/2}$ in~\cite{richard2014statistical, arous2020algorithmic}, and the algorithmic thresholds for sum-of-squares and spectral methods have been given in~\cite{hopkins2015tensor, perry2018optimality, PWB20}.

When $r > 1$, the detection and recovery problems for Model~\eqref{eq: symPCA} are much more difficult to study. In~\cite{lesieur2017statistical, chen2021phase}, the information-theoretic threshold is given under the condition that $\u_1, \dots, \u_r$ are sampled independently. In \cite{huang2022power}, the algorithmic threshold for power iteration is addressed under the condition that $\u_1, \dots, \u_r$ are assumed to be orthogonal to each other. In~\cite{arous2024high, arous2024stochastic}, the algorithmic thresholds for two local methods, i.e., online stochastic gradient descent and gradient flow, are provided, under the assumption that $\u_1, \dots, \u_r$ are orthogonal to each other.

\subsection{Contributions}

In general, local methods provide us critical points instead of global optima. Thus we propose to consider the phase transition phenomenon for finding critical points of the maximum likelihood optimization problem in \eqref{lowrkopt}, i.e., those points defined by the KKT conditions. In particular, we investigate the threshold above which detection of critical points is possible. Moreover, we study the asymptotic behavior of \textit{summary statistics} expressing the relationship between the ground truth signals $\u_i$ and the estimated signals corresponding to a critical point, as introduced in \cite{SGG23}. As a side-product, we provide estimators of the summary statistics related to the true signals, including estimators of $\beta_i$. Our approach is inspired by~\cite{GCC22, SGC24}, which study the rank-one version of Model~\eqref{eq: symPCA} in both symmetric and asymmetric case. In fact, studying the critical points mentioned above is an interesting question for its own sake. For instance, in~\cite{arous2019landscape}, the expected number of critical points of the rank-one case is computed. It is worth mentioning that our results do not assume orthogonality neither between the true signals in the model nor between the computed estimators.

\subsection{Organization}

This paper is organized as follows. We briefly recall basic tools from random matrix theory in Section~\ref{sec:pre}, and set up the problem we will be working on in Section~\ref{sec:setting}. 
In Section~\ref{sec:spectrum}, we define a random matrix $\flat(\sT)$ which arises from the KKT conditions and in Theorem~\ref{th:explicitmu_noise} we give its limiting empirical spectral measure.
Then, we study the limiting alignments between the critical points and ground truth signals in Section~\ref{sec:limitingalignments}. In Section~\ref{sec:inference}, we propose estimators of the summary statistics related to the ground truth signals. 

\section{Preliminaries}\label{sec:pre}

\subsection{Notations}

In this paper, we use calligraphic letters to denote tensors. For a $d$th order tensor $\mathcal{Y}$, we use the uppercase letter $Y_{i_1\dots i_d}$ for its $(i_1, \dots, i_d)$th coordinate. To distinguish from tensors, we use lowercase bold letters to denote vectors. Given a vector $\v_1$, we denote its $j$th entry by the lowercase letter $v^1_j$. For matrices, we employ uppercase bold letters for their notations. For a matrix $\Q^{11}$, by the uppercase letter $Q^{11}_{ij}$ we denote its $(i, j)$th entry. Given an arbitrary matrix $\pmb{M}$, we note $\pmb{M}^{\odot d}$ its Hadamard product iterated $d$ times. The identity matrix will be denoted by $\pId$.
In order to distinguish random quantities that depend on the random tensor $\sX$ from deterministic quantities, we use hat notation for the former, e.g., $\hat\v_1$ denotes a random vector depending on $\sX$.
The tensor contraction operation is denoted by $\langle , \rangle$. For instance, the $(i_{d-1}, i_d)$th entry of $\langle \mathcal{Y}, \v_1^{\otimes (d-2)}\rangle$ is given by
\[
\langle \mathcal{Y}, \v_1^{\otimes (d-2)}\rangle_{i_{d-1}i_d} = \sum_{i_1 = 1}^N \cdots \sum_{i_{d-2} = 1}^N Y_{i_1 \dots i_d} v^1_{i_1} \cdots v^1_{i_{d-2}} \enspace.
\]
For convenience, we use $[r]$ to denote the set $\{1, \dots, r\}$.

\subsection{Random matrix theory tools}

In this subsection we recall some useful tools from random matrix theory, which will play important roles in our analysis. 
In random matrix theory, given a random symmetric matrix, we are interested in its spectral measure. More precisely, given an $N\times N$ symmetric matrix $\pmb{A}$, the empirical spectral measure of $\pmb{A}$ is defined by
\[
\mu_{\pmb{A}} = \frac{1}{N} \sum_{i=1}^N \delta_{\lambda_i(\pmb{A})},
\]
where $\lambda_1(\pmb{A}) \ge \cdots \ge \lambda_N(\pmb{A})$ are the ordered eigenvalues of $\pmb{A}$, and $\delta_{\lambda_i(\pmb{A})}$ is a Dirac mass on $\lambda_i(\pmb{A})$. The limiting behavior of $\mu_{\pmb{A}}$ is a central subject in random matrix theory. The resolvent method provides a powerful tool to study these spectral characteristics. For any $z \in \mathbb{C}_+ = \{z\in \mathbb{C}\mid \mathfrak{Im}(z) > 0\}$, the \emph{resolvent} of $\pmb{A}$ is defined by
\[
\Q_{\pmb{A}}(z) = (\A - z \pId)^{-1}.
\]
On the other hand, for a finite Borel measure $\mu$ on $\mathbb{R}$, we define its \emph{Cauchy-Stieltjes transform} $S_{\mu}$ on $\mathbb{C} \setminus \mathbb{R}$ by
\[
S_{\mu} (z) = \int_{\R} \frac{1}{t - z} \,d\mu(t) \quad \text{for } z \in \mathbb{C}\setminus\R.
\]
Thus $S_{\mu_{\pmb{A}}} (z)$ contains all the information about $\mu_{\pmb{A}}$. The important relation between the Cauchy-Stieltjes transform of $\mu_{\pmb{A}}$ and the resolvent $\Q(z)$ is given by
\begin{equation}\label{relStieltjesSpec}
S_{\mu_{\pmb{A}}} (z) = \frac{1}{N} \operatorname{Tr} (\Q_{\pmb{A}}(z)) \enspace.
\end{equation}
Below we list several basic properties of the resolvent matrix.
\begin{enumerate}
\item Analytic: given $\pmb{A}$, each $(i, j)$th entry of $\Q_{\pmb{A}}(z)$ is an analytic function from $\mathbb{C}_+$ to $\mathbb{C}_+$.
\item Bounded: $\lVert \Q_{\pmb{A}}(z) \rVert \le \operatorname{dist}(z, \operatorname{Spec}(\pmb{A}))^{-1}$, where $\operatorname{Spec}(\pmb{A})$ is the spectrum of $\pmb{A}$.
\item Complement formula: for fixed $i$, the $(i, i)$th entry of $\Q_{\pmb{A}}(z)$, denoted by $Q_{ii}$, is given by
\[
Q_{ii} = - \bigl(z - A_{ii} + (\pmb{Q}_iA_i)^{\dag}A_i\bigr)^{-1},
\]
where $A_i = (A_{ij})_{j\ne i} \in \mathbb{C}^{N-1}$, $\pmb{Q}_i = (\pmb{A}^i - z\pId)^{-1}$, $\pmb{A}^i$ is the principal submatrix of $\pmb{A}$ obtained by removing the $i$th row and the $i$th column of $\pmb{A}$.
\end{enumerate}
By~\eqref{relStieltjesSpec}, we can study the limits of the moments of $\mu_{\pmb{A}}$ via studying the limit of the trace of the resolvent matrix and using the following inverse formula.
\begin{theorem}[Inverse formula of Stieltjes-Perron]
For two points $a < b$ where the cumulative distribution function of $\mu$ is continuous,
\[
\mu([a, b]) = \frac{1}{\pi} \lim_{\epsilon\to 0} \int_a^b \mathfrak{Im}\bigl(S_{\mu}(x + i\epsilon)\bigr) \,dx \enspace.
\]
If $\mu$ admits a density function at $x$, then the density at $x$ is given by
\[
\frac{1}{\pi} \lim_{\epsilon \to 0} \mathfrak{Im}\bigl(S_{\mu} (x + i\epsilon)\bigr) \enspace.
\]
\end{theorem}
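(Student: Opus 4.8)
The plan is to recognize the quantity $\frac{1}{\pi}\mathfrak{Im}\bigl(S_\mu(x+i\epsilon)\bigr)$ as the convolution of $\mu$ with the Poisson kernel of the upper half-plane, and then to apply standard dominated-convergence and approximate-identity arguments. First I would write, for $z = x + i\epsilon$ with $\epsilon > 0$,
\[
S_\mu(x+i\epsilon) = \int_\R \frac{1}{t-x-i\epsilon}\,d\mu(t) = \int_\R \frac{(t-x)+i\epsilon}{(t-x)^2+\epsilon^2}\,d\mu(t),
\]
so that $\frac{1}{\pi}\mathfrak{Im}\bigl(S_\mu(x+i\epsilon)\bigr) = \int_\R P_\epsilon(x-t)\,d\mu(t)$, where $P_\epsilon(y) = \frac{1}{\pi}\,\frac{\epsilon}{y^2+\epsilon^2}$ is nonnegative and satisfies $\int_\R P_\epsilon(y)\,dy = 1$ for every $\epsilon > 0$.

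For the first identity, I would integrate this expression in $x$ over $[a,b]$ and interchange the order of integration (legitimate by Tonelli's theorem, since the integrand is nonnegative and $\mu$ is finite), which gives
\[
\frac{1}{\pi}\int_a^b \mathfrak{Im}\bigl(S_\mu(x+i\epsilon)\bigr)\,dx = \int_\R g_\epsilon(t)\,d\mu(t), \qquad g_\epsilon(t) := \frac{1}{\pi}\left(\arctan\frac{b-t}{\epsilon} - \arctan\frac{a-t}{\epsilon}\right).
\]
Since $0 \le g_\epsilon(t) \le 1$ uniformly in $t$ and $\epsilon$, while $g_\epsilon(t) \to 1$ for $t \in (a,b)$, $g_\epsilon(t) \to 0$ for $t \notin [a,b]$, and $g_\epsilon(t) \to \tfrac12$ for $t \in \{a,b\}$ as $\epsilon \to 0$, the dominated convergence theorem (with dominating function the $\mu$-integrable constant $1$) yields
\[
\lim_{\epsilon \to 0}\frac{1}{\pi}\int_a^b \mathfrak{Im}\bigl(S_\mu(x+i\epsilon)\bigr)\,dx = \mu\bigl((a,b)\bigr) + \tfrac12\mu(\{a\}) + \tfrac12\mu(\{b\}).
\]
The assumption that the cumulative distribution function of $\mu$ is continuous at $a$ and at $b$ is precisely the statement $\mu(\{a\}) = \mu(\{b\}) = 0$, so the right-hand side collapses to $\mu([a,b])$, which proves the first assertion.

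For the density statement I would instead fix $x$ and use that $\{P_\epsilon\}_{\epsilon > 0}$ is an approximate identity: it is nonnegative, has total mass $1$, and for every $\delta > 0$ satisfies $\int_{|y| > \delta} P_\epsilon(y)\,dy \to 0$ as $\epsilon \to 0$. If $\mu$ has a density $f$ in a neighborhood of $x$ with $f$ continuous at $x$ (or, more generally, if $x$ is a Lebesgue point of a locally integrable density), then writing $(P_\epsilon * \mu)(x) - f(x) = \int_\R P_\epsilon(x-t)\bigl(f(t) - f(x)\bigr)\,dt$ and splitting this integral over the regions $|x-t| \le \delta$ and $|x-t| > \delta$ --- controlling the first piece by the continuity of $f$ at $x$ and the second by the mass-escape bound together with local integrability of $f$ --- one obtains $\frac{1}{\pi}\mathfrak{Im}\bigl(S_\mu(x+i\epsilon)\bigr) = (P_\epsilon * \mu)(x) \to f(x)$ as $\epsilon \to 0$.

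All the individual estimates here are routine; the only point that genuinely needs care is the boundary behavior of $g_\epsilon$ at $t = a$ and $t = b$, where the limit is $\tfrac12$ rather than $0$ or $1$. This is exactly why the first statement must be phrased in terms of continuity of the distribution function at the endpoints, and it is the step I would verify most carefully.
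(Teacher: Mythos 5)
Your proof is correct. Note that the paper states this theorem only as a recalled standard fact in the preliminaries (pointing to standard references such as \cite{tao2012topics}) and gives no proof of its own, so there is nothing to compare against; your Poisson-kernel argument is exactly the classical textbook proof that the paper implicitly relies on. Two small points of care: (i) in the first part you correctly identify that the only delicate step is the value $\tfrac12$ of $g_\epsilon$ at the endpoints, which is precisely what the continuity hypothesis on the distribution function at $a$ and $b$ removes; (ii) in the density part, the identity $(P_\epsilon * \mu)(x) - f(x) = \int_\R P_\epsilon(x-t)\bigl(f(t)-f(x)\bigr)\,dt$ as written presupposes that $\mu$ has density $f$ on all of $\R$ --- if the density is only assumed near $x$, you should split off the contribution of $\mu$ restricted to $\{t : |t-x|>\delta\}$ and kill it with the bound $\sup_{|y|>\delta} P_\epsilon(y) \le \epsilon/(\pi\delta^2)$ and the finiteness of $\mu$, which is the mass-escape estimate you already invoke; and the hypothesis should be read as continuity of the density at $x$ (or $x$ a Lebesgue point), as you correctly observe, since the paper's phrase ``admits a density function at $x$'' is not strong enough by itself for pointwise convergence.
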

The inversion formula does not only imply a one-to-one correspondence between Stieltjes functions $S_{\mu}(z)$ and probability measures $\mu$, it also implies the approximate statement holds. More precisely, we have the following equivalence (see for example~\cite{tao2012topics}).
\begin{theorem}
Let $\mu$ and $(\mu_n)_{n\ge 1}$ be a sequence of real probability measures. Then the following are equivalent
\begin{enumerate}
\item $\mu_n \to \mu$ weakly.
\item For any $z\in \mathbb{C}_+$, $S_{\mu_n}(z) \to S_{\mu}(z)$.
\item There is a subset $D \subseteq \mathbb{C}_+$ with an accumulation point in $\mathbb{C}_+$ such that for any $z\in D$, $S_{\mu_n}(z) \to S_{\mu}(z)$.
\end{enumerate}
\end{theorem}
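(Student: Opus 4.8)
The plan is to prove the cyclic chain $(1)\Rightarrow(2)\Rightarrow(3)\Rightarrow(1)$; the first two implications are immediate and the substance is in $(3)\Rightarrow(1)$, which I would decompose into $(3)\Rightarrow(2)$ (an analytic-continuation argument) and $(2)\Rightarrow(1)$ (a tightness argument that invokes the Stieltjes--Perron inversion formula already recorded above). For $(1)\Rightarrow(2)$: fix $z\in\mathbb{C}_+$; since $|t-z|\ge\mathfrak{Im}(z)>0$ for every $t\in\mathbb{R}$, the map $t\mapsto(t-z)^{-1}$ is bounded and continuous on $\mathbb{R}$, so splitting it into its real and imaginary parts and testing weak convergence against each gives $S_{\mu_n}(z)=\int_{\mathbb{R}}(t-z)^{-1}\,d\mu_n(t)\to\int_{\mathbb{R}}(t-z)^{-1}\,d\mu(t)=S_\mu(z)$. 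The implication $(2)\Rightarrow(3)$ is trivial, taking $D=\mathbb{C}_+$.

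For $(3)\Rightarrow(2)$: the family $\{S_{\mu_n}\}_{n\ge1}$ consists of holomorphic functions on the domain $\mathbb{C}_+$ that are uniformly bounded on compact subsets, since $|S_{\mu_n}(z)|\le\int_{\mathbb{R}}|t-z|^{-1}\,d\mu_n(t)\le\mathfrak{Im}(z)^{-1}$ for every probability measure $\mu_n$. By Montel's theorem this family is normal, so every subsequence has a further subsequence converging locally uniformly on $\mathbb{C}_+$ to some holomorphic function $g$. By hypothesis $g=S_\mu$ on $D$, and since $D$ has an accumulation point in $\mathbb{C}_+$, the identity theorem forces $g\equiv S_\mu$ on all of $\mathbb{C}_+$. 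As the limit is $S_\mu$ along every subsequence, the subsequence principle yields $S_{\mu_n}(z)\to S_\mu(z)$ for all $z\in\mathbb{C}_+$.

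For $(2)\Rightarrow(1)$: I would first establish tightness of $(\mu_n)_{n\ge1}$. A direct computation gives $\mathfrak{Im}\,S_{\mu_n}(iy)=\int_{\mathbb{R}}\tfrac{y}{t^2+y^2}\,d\mu_n(t)$, whence $1-y\,\mathfrak{Im}\,S_{\mu_n}(iy)=\int_{\mathbb{R}}\tfrac{t^2}{t^2+y^2}\,d\mu_n(t)\ge\tfrac12\,\mu_n(\{t:|t|\ge y\})$. Since $y\,\mathfrak{Im}\,S_\mu(iy)\to1$ as $y\to\infty$ by dominated convergence, and $y\,\mathfrak{Im}\,S_{\mu_n}(iy)\to y\,\mathfrak{Im}\,S_\mu(iy)$ as $n\to\infty$ for each fixed $y$ by $(2)$, given $\varepsilon>0$ one chooses $y$ large and then $N_0$ so that $\mu_n(\{|t|\ge y\})<\varepsilon$ for all $n\ge N_0$; enlarging $y$ to accommodate the finitely many indices $n<N_0$ yields tightness. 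By Prokhorov's theorem any subsequence of $(\mu_n)$ then has a weakly convergent sub-subsequence whose limit $\nu$ is a probability measure (no mass escapes, by tightness). Applying $(1)\Rightarrow(2)$ along this sub-subsequence gives $S_\nu=S_\mu$ on $\mathbb{C}_+$, and the Stieltjes--Perron inversion formula forces $\nu=\mu$. Since $\mu$ is the weak limit along every subsequence, $\mu_n\to\mu$ weakly.

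I expect the tightness estimate in the last step to be the main obstacle: it is where one must convert the merely pointwise convergence of $S_{\mu_n}$ along the imaginary axis into uniform-in-$n$ control of the tails of $\mu_n$, which requires handling the finitely many small indices separately and coordinating the two limits $y\to\infty$ and $n\to\infty$ in the right order. Everything else is soft (normal families, the identity theorem, Prokhorov, and uniqueness of the Cauchy--Stieltjes transform).
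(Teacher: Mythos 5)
Your proof is correct. Note that the paper does not prove this statement at all --- it is quoted as a standard fact with a citation to the random matrix theory literature --- and your argument (testing weak convergence against the bounded continuous kernel $t\mapsto(t-z)^{-1}$, Montel plus the identity theorem for $(3)\Rightarrow(2)$, the tightness bound via $1-y\,\mathfrak{Im}\,S_{\mu_n}(iy)\ge\tfrac12\mu_n(\{|t|\ge y\})$, Prokhorov, and uniqueness from the Stieltjes--Perron inversion) is exactly the standard route used in that reference, so there is nothing to add.
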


\subsection{Gaussian calculations}

We will focus on the Gaussian noise in this paper, and will frequently apply the following Stein's lemma, a.k.a. Stein's identity or Gaussian integration by parts, to the computations.
\begin{lemma}[Stein's lemma]
Let $X \sim \mathcal{N}(0, \sigma^2)$ and $f: \mathbb{R} \to \mathbb{R}$ be continuously differentiable with at most polynomial growth. Then
\[
\E[Xf(X)] = \sigma^2\E[f'(X)].
\]
\end{lemma}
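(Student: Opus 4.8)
The plan is to reduce the identity to a single integration by parts against the Gaussian density. Let $\varphi_\sigma(x) = (2\pi\sigma^2)^{-1/2}\exp(-x^2/(2\sigma^2))$ denote the density of $X$, and record the elementary relation $\varphi_\sigma'(x) = -\sigma^{-2}x\,\varphi_\sigma(x)$, equivalently $x\,\varphi_\sigma(x) = -\sigma^2\varphi_\sigma'(x)$. Substituting this into the definition of the expectation gives
\[
\E[Xf(X)] \;=\; \int_{\R} x f(x)\,\varphi_\sigma(x)\,dx \;=\; -\sigma^2\int_{\R} f(x)\,\varphi_\sigma'(x)\,dx,
\]
where the first integral converges absolutely because $|xf(x)|$ has at most polynomial growth while $\varphi_\sigma$ has Gaussian decay.

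Next I would integrate by parts on a truncated interval and pass to the limit. For $R>0$,
\[
-\sigma^2\int_{-R}^{R} f(x)\varphi_\sigma'(x)\,dx \;=\; -\sigma^2\bigl(f(R)\varphi_\sigma(R) - f(-R)\varphi_\sigma(-R)\bigr) \;+\; \sigma^2\int_{-R}^{R} f'(x)\varphi_\sigma(x)\,dx .
\]
Since $f$ has at most polynomial growth while $\varphi_\sigma(\pm R) = O\!\bigl(e^{-R^2/(2\sigma^2)}\bigr)$, the boundary term tends to $0$ as $R\to\infty$; and the left-hand side converges to $-\sigma^2\int_{\R} f\varphi_\sigma'$ by dominated convergence (again by polynomial growth of $f$ against Gaussian decay). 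Rearranging and letting $R\to\infty$ yields $\E[Xf(X)] = \sigma^2\int_{\R} f'(x)\varphi_\sigma(x)\,dx = \sigma^2\E[f'(X)]$.

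The delicate point — and the only place needing care — is the interpretation of $\E[f'(X)]$: polynomial growth of $f$ does not by itself force $f'$ to have polynomial growth, so a priori $f'\varphi_\sigma$ need not be absolutely integrable, and the computation above produces $\E[f'(X)]$ only as the convergent improper integral $\lim_{R\to\infty}\int_{-R}^{R} f'\varphi_\sigma$. In all applications in this paper, however, $f$ is a polynomial or a bounded analytic function of resolvent entries, for which $f'$ does have polynomial growth and $\E|f'(X)|<\infty$; under that mild additional hypothesis one may apply dominated convergence directly to $\int_{-R}^{R} f'\varphi_\sigma$, so that $\E[f'(X)]$ is an honest, absolutely convergent expectation and the stated identity holds. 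The same argument, applied coordinate by coordinate, gives the multivariate Gaussian integration-by-parts formula that underlies the resolvent computations in the sequel.
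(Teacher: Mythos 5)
Your proof is correct: it is the standard Gaussian integration-by-parts argument (differentiate the density, integrate by parts on $[-R,R]$, kill the boundary terms by polynomial growth against Gaussian decay), and the paper states this lemma without proof as a recalled classical fact, so there is no competing argument to compare against. Your caveat is also well placed: polynomial growth of $f$ alone does not ensure absolute integrability of $f'\varphi_\sigma$, so one should either interpret $\E[f'(X)]$ as the convergent improper integral your computation produces or strengthen the hypothesis to require polynomial growth of $f'$ as well — which holds in every application of the lemma made in this paper.
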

The following property of Gaussian random variables will help us control the variance of a function.
\begin{lemma}\label{lem:Poincare}
A standard Gaussian variable $X$ on $\mathbb{R}^N$ satisfies Poincar\'e inequality with constant $1$, namely, for any differential function $f$ with $\E f(X)^2 < \infty$,
\[
\operatorname{\mathbb{V}ar} \bigl(f(X)\bigr) \le \E\lVert \nabla f(X)\rVert^2_2.
\]
\end{lemma}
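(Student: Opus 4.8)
The plan is to prove the inequality via the Ornstein--Uhlenbeck semigroup and its gradient-commutation property. Write $\gamma_N$ for the standard Gaussian measure on $\mathbb{R}^N$ and, for $t \ge 0$, define the operator
\[
P_t f(x) = \int_{\mathbb{R}^N} f\bigl(e^{-t} x + \sqrt{1 - e^{-2t}}\, y\bigr)\, d\gamma_N(y)
\]
given by Mehler's formula. Since replacing $f$ by $f - \E f$ alters neither side of the claimed inequality, I may assume $\E f = 0$; then $P_0 f = f$ and $P_t f \to \E f = 0$ in $L^2(\gamma_N)$ as $t\to\infty$. By a routine mollification argument I may also assume $f$ is smooth with derivatives controlled well enough to justify the manipulations below, deferring the general case to a density step at the end.

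First I would record the two structural facts needed. The generator $L = \Delta - x\cdot\nabla$ of $(P_t)$ satisfies the Dirichlet-form identity $\int (Lg)\,h\, d\gamma_N = -\int \langle \nabla g, \nabla h\rangle\, d\gamma_N$, which is exactly Gaussian integration by parts (Stein's lemma, stated above). Differentiating Mehler's formula under the integral sign yields the commutation relation $\nabla (P_t f) = e^{-t}\, P_t(\nabla f)$, understood coordinatewise. With these in hand I would telescope along the semigroup: using $P_0 f = f$, $P_t f \to 0$ in $L^2(\gamma_N)$, the fundamental theorem of calculus, $\frac{d}{dt} P_t f = L P_t f$, and the Dirichlet-form identity,
\[
\operatorname{\mathbb{V}ar}(f) = \E[f^2] = -\int_0^\infty \frac{d}{dt}\,\E\bigl[(P_t f)^2\bigr]\, dt = 2\int_0^\infty \E\bigl\|\nabla (P_t f)\bigr\|_2^2\, dt.
\]
Next I would bound the integrand. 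By the commutation relation and Jensen's inequality applied to each coordinate of $P_t$ (an average against a probability measure),
\[
\bigl\|\nabla (P_t f)(x)\bigr\|_2^2 = e^{-2t}\sum_{i=1}^N \bigl(P_t(\partial_i f)(x)\bigr)^2 \le e^{-2t}\sum_{i=1}^N P_t\bigl((\partial_i f)^2\bigr)(x) = e^{-2t}\, P_t\bigl(\|\nabla f\|_2^2\bigr)(x),
\]
and since $\gamma_N$ is invariant under $P_t$, integrating against $\gamma_N$ gives $\E\|\nabla (P_t f)\|_2^2 \le e^{-2t}\,\E\|\nabla f\|_2^2$. Plugging this in yields $\operatorname{\mathbb{V}ar}(f) \le 2\bigl(\int_0^\infty e^{-2t}\, dt\bigr)\,\E\|\nabla f\|_2^2 = \E\|\nabla f\|_2^2$, the claim.

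The main obstacle is not the algebra but the analytic bookkeeping around differentiation under the integral in Mehler's formula, the interchange $\frac{d}{dt}\E[(P_t f)^2] = 2\E[(P_t f)(L P_t f)]$, and the validity of the integration-by-parts identity when $f$ is merely once-differentiable with $\E\|\nabla f\|_2^2 < \infty$ rather than Schwartz. The standard remedy is to establish the inequality first for smooth $f$ with bounded derivatives of all orders, where every step is transparently legitimate, and then to pass to the general case by approximating $f$ in the norm $\|f\|_{L^2(\gamma_N)} + \|\nabla f\|_{L^2(\gamma_N)}$ by such functions and using continuity of both the variance and the Dirichlet energy. A shorter alternative, which I would mention as a sanity check, is the Hermite-polynomial route: expanding $f = \sum_{\alpha\in\mathbb{N}^N} c_\alpha H_\alpha$ in the orthonormal basis of tensorized Hermite polynomials gives $\operatorname{\mathbb{V}ar}(f) = \sum_{\alpha\ne 0} c_\alpha^2$ and, since $\partial_i H_\alpha = \sqrt{\alpha_i}\, H_{\alpha - e_i}$, also $\E\|\nabla f\|_2^2 = \sum_{\alpha} |\alpha|\, c_\alpha^2$, so the inequality is immediate from $|\alpha| \ge 1$ for $\alpha \ne 0$; here the only delicate points are completeness of the Hermite basis and termwise differentiation, again handled by truncation.
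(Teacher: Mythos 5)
Your argument is correct. Note, however, that the paper offers no proof of this lemma at all: it is stated in the preliminaries as a standard fact of Gaussian analysis (the Gaussian Poincar\'e inequality) and used later only to bound the variance of $\frac{1}{rN}\operatorname{Tr}\Q(z)$, so there is no in-paper argument to compare yours against. Your Ornstein--Uhlenbeck route is the canonical proof and all the key steps check out: the Dirichlet-form identity for $L=\Delta-x\cdot\nabla$, the commutation $\nabla P_t f=e^{-t}P_t(\nabla f)$, the semigroup interpolation $\operatorname{\mathbb{V}ar}(f)=2\int_0^\infty \E\lVert\nabla P_t f\rVert_2^2\,dt$, Jensen plus invariance to get the factor $e^{-2t}$, and the evaluation $2\int_0^\infty e^{-2t}\,dt=1$ giving the sharp constant. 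The regularity caveats you flag are exactly the right ones, and the standard fix you propose (prove it for smooth $f$ with bounded derivatives, then approximate in $\lVert f\rVert_{L^2(\gamma_N)}+\lVert\nabla f\rVert_{L^2(\gamma_N)}$) closes them; one could also remark that if $\E\lVert\nabla f\rVert_2^2=\infty$ the inequality is vacuous, so one may assume $f$ lies in the Gaussian Sobolev space from the start. Your Hermite-expansion alternative is likewise correct and arguably the quickest self-contained argument ($\operatorname{\mathbb{V}ar}(f)=\sum_{\alpha\neq 0}c_\alpha^2\le\sum_\alpha|\alpha|c_\alpha^2=\E\lVert\nabla f\rVert_2^2$), at the cost of invoking completeness of the Hermite basis and justifying termwise differentiation. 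Either route would serve as a complete proof of the lemma as stated.
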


\subsection{Symmetric Gaussian distribution}
Let us explicitly describe the distribution of the symmetric Gaussian noise tensor $\sX$. It is characterized by the following equality:
\begin{equation}\label{eq:symmodel_noise}
\sX_{i_1\dots i_d} =  \frac{1}{d!} \sum_{\pi\in\mathfrak{S}_d} \sW_{i_{\pi(1)}\dots i_{\pi(d)}},
\end{equation}
where $\sW$ is a Gaussian noise tensor with i.i.d. entries $\sW_{i_1\dots i_d} \sim \mathcal{N}(0, 1)$ and $\mathfrak{S}_d$ is the symmetric group on the set $[d]$. Thus $\E(\sX_{i_1\dots i_d}) = 0$, and we denote the variance by $\sigma_{\bm{i}}^2 = \sigma^2_{i_1 \dots i_d}(\sX)$. More precisely, assume as a multiset
\[
I = \{i_1, \dots, i_d\} = \{\ell_1, \dots, \ell_1, \dots, \ell_k, \dots, \ell_k\},
\]
where $\ell_1, \dots, \ell_k$ are distinct integers, and the number of $\ell_i$ contained in $I$ is denoted by $m_i$. Then
\begin{equation}\label{eq:var}
\sigma^2_{\bm{i}} = \frac{1}{\binom{d}{m_1, \dots, m_k}} \enspace.
\end{equation}
\section{General setting}\label{sec:setting}

Let us recall that we study in this paper the following noisy symmetric rank-$r$ model
\begin{equation}\label{eq:tensormodel}
\mathcal{T} = \sum_{i=1}^r \beta_i \u_i^{\otimes d} + \frac{1}{\sqrt{N}} \sX,
\end{equation}
where $\sT$ is our observed tensor, $\beta_1, \dots, \beta_r$ are the signal-to-noise ratios, $\u_1, \dots, \u_r$ are unit-length vectors to be inferred, and $\sX$ is a symmetric Gaussian noise whose elements are drawn as in \eqref{eq:symmodel_noise}. Without loss of generality, we further assume
\begin{equation}\label{eq:betarelation}
\lvert \beta_1 \rvert \ge \cdots \ge \lvert \beta_r \rvert.
\end{equation}
Given a sequence of tensors $(\sT_N)_{N\ge 1}$ such that each member $\sT_N$ satisfies Model~\eqref{eq:tensormodel}, we are interested in its limiting behaviour when $N$ goes to infinity. For convenience of notation, we drop the $N$ assuming implicitly its dependence on $N$.

Recall that, by definition, $\sum_{j=1}^r \hat\gamma_j \hat\v_j^{\otimes d}$ is a best symmetric rank-$r$ approximation of $\sT$ if and only if $\sum_{j=1}^r \hat\gamma_j \hat\v_j^{\otimes d}$ is a solution of the  minimization problem \eqref{lowrkopt}.
Generally speaking, given a tensor $\sT$, it is possible that the infimum in~\eqref{lowrkopt} cannot be achieved \cite{deSilva2008tensor}. However, in our case, due to the Gaussian noise $\mathcal{\sW}$, the infimum can be always obtained. For this optimization problem~\eqref{lowrkopt}, we start from its Karush-Kuhn-Tucker (KKT) conditions. To this end, we will consider the Lagrangian of~\eqref{lowrkopt}, which is given by
\begin{equation}\label{eq:Lagop}
\sL(\gamma_1, \dots, \gamma_r, \v_1, \dots, \v_r, \mu_1, \dots, \mu_r) = \Vert \sT - \sum_{j=1}^r \gamma_j \v_j^{\otimes d}\Vert^2 + \sum_{j=1}^r \mu_j(\Vert \v_j \Vert^2 - 1).
\end{equation}
Then the KKT conditions are given by
\begin{proposition}\label{prop:lowrkappopt}
Given a symmetric tensor $\sT$, assume $\sum_{j=1}^r \hat\gamma_j \hat\v_j^{\otimes d}$ is a best symmetric rank-$r$ approximation of $\sT$. Then the tuple $(\hat\gamma_1, \dots, \hat\gamma_r, \hat\v_1, \dots, \hat\v_r)$ satisfies
\begin{equation}\label{eq:rkrapprox}
\begin{dcases}
\langle \sT - \sum_{j=1}^r \hat\gamma_j \hat\v_j^{\otimes d}, \hat\v_i^{\otimes (d-1)} \rangle = 0 \\
\langle \hat\v_i, \hat\v_i \rangle = 1
\end{dcases}
\end{equation}
for any $i\in [r]$.
\end{proposition}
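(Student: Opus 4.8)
The plan is the standard first-order necessary-conditions argument for the constrained problem~\eqref{lowrkopt}, organized around the Lagrangian $\sL$ in~\eqref{eq:Lagop}. A minimizer $(\hat\gamma_1,\dots,\hat\gamma_r,\hat\v_1,\dots,\hat\v_r)$ is given by hypothesis. Since every constraint is the equality $\lVert\v_j\rVert^2=1$, the KKT conditions reduce to stationarity of $\sL$ together with primal feasibility, and the relevant constraint qualification is immediate: the constraint gradients are the vectors $2\v_j$ sitting in pairwise disjoint coordinate blocks, each nonzero because $\lVert\v_j\rVert=1$, hence linearly independent at every feasible point. So there exist multipliers $\hat\mu_1,\dots,\hat\mu_r$ with $\nabla\sL=0$ at the minimizer.

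Write $\mathcal{R}:=\sT-\sum_{j=1}^r\hat\gamma_j\hat\v_j^{\otimes d}$ for the residual, which is symmetric since $\sT$ and each $\hat\v_j^{\otimes d}$ are. Using $\langle\v_j^{\otimes d},\v_k^{\otimes d}\rangle=\langle\v_j,\v_k\rangle^d$, the symmetry of $\sT$, and the elementary identities $\nabla_{\v_i}\langle\sT,\v_i^{\otimes d}\rangle=d\langle\sT,\v_i^{\otimes(d-1)}\rangle$ and $\nabla_{\v_i}\langle\v_i,\v_k\rangle^d=d\langle\v_i,\v_k\rangle^{d-1}\v_k$, a direct computation of the partial gradients gives
\[
\partial_{\gamma_i}\sL=-2\langle\mathcal{R},\hat\v_i^{\otimes d}\rangle,\qquad \nabla_{\v_i}\sL=-2d\,\hat\gamma_i\langle\mathcal{R},\hat\v_i^{\otimes(d-1)}\rangle+2\hat\mu_i\hat\v_i .
\]
The point worth checking here is that the self-interaction term $\hat\gamma_i^2\langle\hat\v_i,\hat\v_i\rangle^d$ is constant on the feasible set and contributes nothing to $\nabla_{\v_i}\sL$. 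Setting both gradients to zero yields, for each $i\in[r]$,
\[
\langle\mathcal{R},\hat\v_i^{\otimes d}\rangle=0,\qquad d\,\hat\gamma_i\langle\mathcal{R},\hat\v_i^{\otimes(d-1)}\rangle=\hat\mu_i\hat\v_i,
\]
while primal feasibility is exactly $\langle\hat\v_i,\hat\v_i\rangle=1$.

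To finish I would eliminate the multiplier: taking the inner product of the second relation with $\hat\v_i$ gives $d\,\hat\gamma_i\langle\langle\mathcal{R},\hat\v_i^{\otimes(d-1)}\rangle,\hat\v_i\rangle=\hat\mu_i\lVert\hat\v_i\rVert^2$, whose left-hand side equals $d\,\hat\gamma_i\langle\mathcal{R},\hat\v_i^{\otimes d}\rangle=0$ (here symmetry of $\mathcal{R}$ makes the iterated contraction unambiguous) and whose right-hand side equals $\hat\mu_i$; hence $\hat\mu_i=0$. Substituting back gives $\hat\gamma_i\langle\mathcal{R},\hat\v_i^{\otimes(d-1)}\rangle=0$, so $\langle\mathcal{R},\hat\v_i^{\otimes(d-1)}\rangle=0$ as soon as $\hat\gamma_i\neq0$, which together with $\langle\hat\v_i,\hat\v_i\rangle=1$ is precisely~\eqref{eq:rkrapprox}.

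The one place requiring care — and the main obstacle — is the degenerate case $\hat\gamma_i=0$: then the $i$th summand vanishes, $\hat\v_i$ is unconstrained by the objective, and~\eqref{eq:rkrapprox} need not hold for that particular $\hat\v_i$. I would handle this by observing that one may assume $\hat\gamma_i\neq0$ for all $i$ without loss of generality, since otherwise the best rank-$r$ approximation is in fact of lower rank and the corresponding $\hat\v_i$ may be dropped or chosen arbitrarily; and, for the Gaussian model~\eqref{eq:tensormodel} of primary interest, a best rank-$r$ approximation has all $\hat\gamma_i\neq0$ almost surely. I would also make sure the constraint-qualification remark is phrased precisely enough to justify calling~\eqref{eq:rkrapprox} the KKT conditions of~\eqref{lowrkopt}.
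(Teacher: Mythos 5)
Your argument is correct and is essentially the paper's proof: both derive \eqref{eq:rkrapprox} from first-order stationarity of the Lagrangian \eqref{eq:Lagop}, the only difference being bookkeeping --- you keep the multiplier and eliminate it by pairing $\nabla_{\v_i}\sL=0$ with $\hat\v_i$ and invoking $\partial_{\gamma_i}\sL=0$, whereas the paper tests the $\v_i$-derivative on tangent directions $\pmb{w}_i$ and recovers the radial component from the $\gamma_i$-equation by decomposing an arbitrary vector as $\alpha_i\hat\v_i+\pmb{w}_i$. Two minor points: your aside that the self-interaction term contributes nothing to $\nabla_{\v_i}\sL$ is literally false (its gradient is $2d\hat\gamma_i^2\hat\v_i$, which is already absorbed into your correct residual formula), and the degenerate case $\hat\gamma_i=0$ --- which the paper's proof glosses over as well --- needs no genericity or WLOG fix: optimality over $\gamma_i\in\mathbb{R}$ for arbitrary unit $\v$ forces $\langle\sT-\sum_j\hat\gamma_j\hat\v_j^{\otimes d},\v^{\otimes d}\rangle=0$ for all $\v$, hence the residual vanishes and \eqref{eq:rkrapprox} holds trivially for that $i$.
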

\begin{proof}
See Appendix~\ref{subsec:proplowrkopt}.
\end{proof}

Similarly to ~\eqref{eq:betarelation}, we will also assume in the sequel
\begin{equation}
\vert\hat\gamma_1\vert \geq \dots \geq \vert\hat\gamma_r\vert .
\end{equation} 
We are interested in evaluating how  \textit{estimated signals} $(\hat\gamma_1\hat\v_1,\dots,\hat\gamma_r\hat\v_r)$ satisfying \eqref{eq:rkrapprox} relate to the \textit{true signals} $(\beta_1\u_1,\dots,\beta_r\u_r)$, when $N$ goes large.
For this purpose, we first reformulate \eqref{eq:rkrapprox} so that the signals $(\hat\gamma_1\hat\v_1,\dots,\hat\gamma_r\hat\v_r)$ form an eigenvector of some matrix correlated with the noise tensor, and then apply tools from random matrix theory to study its asymptotic behavior. More precisely, let us define the Gram matrix
\begin{equation}
\pmb{\hat R}_{vv,r} = \begin{bmatrix}
    \langle \hat\v_1,\hat\v_1\rangle  & \dots & \langle \hat\v_1,\hat\v_r\rangle \\
    \vdots & \ddots  & \vdots  \\
    \langle \hat\v_r,\hat\v_1\rangle & \dots & \langle \hat\v_r,\hat\v_r\rangle \\
\end{bmatrix},
\end{equation}
whose $(d-1)$th Hadamard power is given by
\[
\pmb{\hat{R}}^{\odot (d-1)}_{vv,r} = \begin{bmatrix}
    1 & \langle \hat\v_1, \hat\v_2 \rangle^{d-1} & \langle \hat\v_1, \hat\v_3 \rangle^{d-1} & \dots  & \langle \hat\v_1, \hat\v_r \rangle^{d-1} \\
    \langle \hat\v_1, \hat\v_2 \rangle^{d-1} &  & \langle \hat\v_2, \hat\v_3 \rangle^{d-1} & \dots  & \langle \hat\v_2, \hat\v_r \rangle^{d-1} \\
    \vdots & \vdots & \vdots & \ddots & \vdots \\
    \langle \hat\v_1, \hat\v_r \rangle^{d-1} & \langle \hat\v_2, \hat\v_r \rangle^{d-1} & \langle \hat\v_3, \hat\v_r \rangle^{d-1} & \dots  & 1
\end{bmatrix}.
\]
In addition, let
\begin{equation}\label{eq:Wdef}
\pmb{\hat W}_r = 
\Big(\pmb{\hat{R}}^{\odot (d-1)}_{vv,r}\Big)^{-1}
\begin{bmatrix}
    \frac{\hat\gamma_r}{\hat\gamma_1}  & 0 & \dots  & 0 \\
    0 & \frac{\hat\gamma_r}{\hat\gamma_2}  & \dots  & 0 \\
    \vdots & \vdots & \ddots & \vdots \\
    0 & 0 & \dots  & 1
\end{bmatrix}, 
\end{equation}
and
\begin{equation}\label{eq:defbsTgeneral}
\flat(\sT) = (\pmb{\hat W}_r\otimes\pId) \begin{bmatrix}
    \langle \sT, \hat\v^{\otimes (d-2)}_1 \rangle & 0 & \dots  & 0 \\
    0 & \langle \sT, \hat\v^{\otimes (d-2)}_2 \rangle & \dots  & 0 \\
    \vdots & \vdots & \ddots & \vdots \\
    0 & 0 & \dots  & \langle \sT, \hat\v^{\otimes (d-2)}_r \rangle
\end{bmatrix},
\end{equation}
where $\otimes$ denotes the Kronecker product.
Note that the definition of $\flat(\sT)$ depends on the choice of $\hat\v_1, \dots, \hat\v_r, \hat\gamma_1, \dots, \hat\gamma_r$ as well. Yet, to simplify the notation, we will only keep the dependency on $\sT$.
We have then the following result expressing the solutions of \eqref{eq:rkrapprox} as eigenvectors of $\flat(\sT)$.
\begin{proposition}\label{prop:lowrkappopt2}
Consider a tuple $(\hat\gamma_1, \dots, \hat\gamma_r, \hat\v_1, \dots, \hat\v_r)\in\mathbb{R} \times \cdots \times \mathbb{R} \times\mathbb{S}^{N-1}\times \cdots \times \mathbb{S}^{N-1}$ such that ${\hat \v_1}^{\otimes (d-1)}, \dots, {\hat \v_r}^{\otimes (d-1)}$ are linearly independent.
Then $(\hat\gamma_1, \dots, \hat\gamma_r, \hat\v_1, \dots, \hat\v_r)$ is a solution of Equation~\eqref{eq:rkrapprox} if and only if the vector $\begin{bmatrix}\hat\gamma_1\hat\v_1 \cdots \hat\gamma_r\hat\v_r\end{bmatrix}^{\top}$ is an eigenvector of $\flat(\sT)$ associated with eigenvalue $\hat\gamma_r$, i.e.,
\begin{equation}\label{eq:kktrkrmatrix}
\flat(\sT)\begin{bmatrix}\hat\gamma_1\hat\v_1\\ \vdots \\ \hat\gamma_r\hat\v_r\end{bmatrix} = \hat\gamma_r \begin{bmatrix}\hat\gamma_1\hat\v_1\\ \vdots \\ \hat\gamma_r\hat\v_r\end{bmatrix} \enspace.
\end{equation}
\end{proposition}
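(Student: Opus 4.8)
The plan is to show that, under the stated linear-independence hypothesis, the two conditions in \eqref{eq:rkrapprox} and the single eigenvector equation \eqref{eq:kktrkrmatrix} are linked by a short chain of \emph{reversible} algebraic steps, so that the ``if and only if'' follows at once. Write $\hat{\pmb z}:=\begin{bmatrix}\hat\gamma_1\hat\v_1\\ \vdots\\ \hat\gamma_r\hat\v_r\end{bmatrix}\in\mathbb{R}^{rN}$ and $\hat{\pmb M}_i:=\langle\sT,\hat\v_i^{\otimes(d-2)}\rangle$ for the $N\times N$ matrices appearing on the diagonal of \eqref{eq:defbsTgeneral}; throughout I assume $\hat\gamma_i\ne 0$ for all $i$, which is what is needed for $\flat(\sT)$ to be defined via $\pmb{\hat W}_r$ and for $\pmb{\hat W}_r$ to be invertible. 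The normalization constraints $\langle\hat\v_i,\hat\v_i\rangle=1$ are part of the hypothesis $\hat\v_i\in\mathbb{S}^{N-1}$, so the whole task is to match the stationarity equation (the first line of \eqref{eq:rkrapprox}) with \eqref{eq:kktrkrmatrix}.

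First I would rewrite the stationarity equation coordinate-freely. Since $\hat\v_j^{\otimes d}$ is symmetric, a direct computation gives $\langle\hat\v_j^{\otimes d},\hat\v_i^{\otimes(d-1)}\rangle=\langle\hat\v_i,\hat\v_j\rangle^{d-1}\hat\v_j$, so the first line of \eqref{eq:rkrapprox} is equivalent, for each $i\in[r]$, to the $\mathbb{R}^N$-identity
\[
\langle\sT,\hat\v_i^{\otimes(d-1)}\rangle=\sum_{j=1}^r\hat\gamma_j\,\langle\hat\v_i,\hat\v_j\rangle^{d-1}\hat\v_j.
\]
Using $\langle\sT,\hat\v_i^{\otimes(d-1)}\rangle=\hat{\pmb M}_i\hat\v_i$, multiplying the $i$-th identity by $\hat\gamma_i\ (\ne 0)$ and stacking over $i\in[r]$ turns this into the single identity in $\mathbb{R}^{rN}$
\[
\mathrm{diag}(\hat{\pmb M}_1,\dots,\hat{\pmb M}_r)\,\hat{\pmb z}=\Bigl(\mathrm{diag}(\hat\gamma_1,\dots,\hat\gamma_r)\,\pmb{\hat R}^{\odot(d-1)}_{vv,r}\otimes\pId\Bigr)\hat{\pmb z},
\]
because the $i$-th block of the right-hand side is exactly $\hat\gamma_i\sum_j\langle\hat\v_i,\hat\v_j\rangle^{d-1}\hat\gamma_j\hat\v_j$; since the $\hat\gamma_i$ are nonzero, this passage is an equivalence.

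Next I would left-multiply the last display by $\pmb{\hat W}_r\otimes\pId$. By the definition \eqref{eq:defbsTgeneral} its left-hand side becomes $\flat(\sT)\,\hat{\pmb z}$, while on the right, \eqref{eq:Wdef} and the mixed-product rule for Kronecker products give
\[
\pmb{\hat W}_r\,\mathrm{diag}(\hat\gamma_1,\dots,\hat\gamma_r)\,\pmb{\hat R}^{\odot(d-1)}_{vv,r}=\bigl(\pmb{\hat R}^{\odot(d-1)}_{vv,r}\bigr)^{-1}\mathrm{diag}\!\Bigl(\tfrac{\hat\gamma_r}{\hat\gamma_1},\dots,\tfrac{\hat\gamma_r}{\hat\gamma_{r-1}},1\Bigr)\mathrm{diag}(\hat\gamma_1,\dots,\hat\gamma_r)\,\pmb{\hat R}^{\odot(d-1)}_{vv,r}=\hat\gamma_r\,\pId,
\]
so the right-hand side collapses to $\hat\gamma_r\,\hat{\pmb z}$ and we recover \eqref{eq:kktrkrmatrix}. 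Conversely, left-multiplying \eqref{eq:kktrkrmatrix} by $\pmb{\hat W}_r^{-1}\otimes\pId$ undoes this step, and then dividing the $i$-th block by $\hat\gamma_i\ (\ne 0)$ undoes the stacking, giving back the stationarity equation. The only place the hypothesis is used is in inverting $\pmb{\hat W}_r$: since $\langle\hat\v_i^{\otimes(d-1)},\hat\v_j^{\otimes(d-1)}\rangle=\langle\hat\v_i,\hat\v_j\rangle^{d-1}$, the matrix $\pmb{\hat R}^{\odot(d-1)}_{vv,r}$ is precisely the Gram matrix of $\hat\v_1^{\otimes(d-1)},\dots,\hat\v_r^{\otimes(d-1)}$, hence invertible if and only if these $r$ tensors are linearly independent.

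I do not expect a genuine obstacle here; the proof is essentially bookkeeping with Kronecker products and tensor contractions. The two points that need care are (i) identifying $\pmb{\hat R}^{\odot(d-1)}_{vv,r}$ with the Gram matrix of the tensors $\hat\v_i^{\otimes(d-1)}$, which is what makes $\pmb{\hat W}_r$ (hence $\flat(\sT)$) well posed and supplies the converse implication, and (ii) keeping track of the scalar factors $\hat\gamma_i$ used to assemble the $r$ vector equations into one matrix equation, i.e.\ checking that none of the $\hat\gamma_i$ vanishes --- which is in any case forced by the ratios $\hat\gamma_r/\hat\gamma_i$ in \eqref{eq:Wdef} and by the invertibility of $\pmb{\hat W}_r$.
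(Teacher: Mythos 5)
Your proof is correct and follows essentially the same route as the paper: you rewrite the stationarity conditions of \eqref{eq:rkrapprox} as $\langle \sT,\hat\v_i^{\otimes(d-2)}\rangle\hat\v_i=\sum_j\hat\gamma_j\langle\hat\v_i,\hat\v_j\rangle^{d-1}\hat\v_j$, stack them, apply $\pmb{\hat W}_r\otimes\pId$ to reach \eqref{eq:kktrkrmatrix}, and identify $\pmb{\hat R}^{\odot(d-1)}_{vv,r}$ as the Gram matrix of $\hat\v_1^{\otimes(d-1)},\dots,\hat\v_r^{\otimes(d-1)}$ to justify invertibility, exactly as in the paper's argument. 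Your explicit flagging of $\hat\gamma_i\neq 0$ (needed for the block rescaling and for inverting $\pmb{\hat W}_r$ in the converse) only makes precise an assumption the paper leaves implicit in the definition of $\pmb{\hat W}_r$.
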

\begin{proof}
See Appendix~\ref{subsec:proplowrkopt2}.
\end{proof}


We will call the tuples $(\hat\gamma_1\hat\v_1, \dots,\hat\gamma_r\hat\v_r)$ satisfying \eqref{eq:kktrkrmatrix} (or equivalently \eqref{eq:rkrapprox}) the \textit{critical points} of the likelihood. The maximum likelihood estimator is obviously one of the critical points but is not the only one. We want to study the asymptotic behavior of these sequences of critical points. In particular, we consider the asymptotic behavior of summary statistics among 
\[
\langle\hat\v_i,\hat\v_j\rangle,\quad\langle\u_i,\u_j\rangle,\quad\langle\u_i,\hat\v_j\rangle \text{\quad  and \quad} \gamma_j\text{\quad for $1\leq i,j\leq r$}.
\]
We will focus on sequences of critical points such that their summary statistics converge when $N\rightarrow\infty$. Showing such convergence is challenging and it is likely that an arbitrary sequence of critical points is not convergent. However, we conjecture that specific sequences of critical points such as the sequence of maximum likelihood estimators can be proven to have convergent summary statistics as it is the case in the rank-1 case \cite{jagannath2020statistical}.

In order to understand the asymptotic behavior of sequence of critical points, we study the spectra of $\flat(\sT)$. 
To this end, we define its resolvent as
\[
\Q(z) = (\flat(\sT) - z\pId)^{-1} = \begin{bmatrix}
\Q^{11}(z) & \Q^{12}(z) & \dots & \Q^{1r}(z) \\
\Q^{21}(z) & \Q^{22}(z) & \dots & \Q^{2r}(z) \\
\vdots &\vdots  & \ddots & \vdots \\
\Q^{r1}(z) & \Q^{r2}(z) & \dots & \Q^{rr}(z) \\
\end{bmatrix}.
\]
Note that $\flat(\sT)$ may not be symmetric. Nevertheless, the following lemma holds.
\begin{lemma}\label{lem:eigenreal}
When ${\hat \v_1}^{\otimes (d-1)}, \dots, {\hat \v_r}^{\otimes (d-1)}$ are linearly independent, the eigenvalues of $\flat(\sT)$ are real, and the Stieltjes transform of $\mu_{\flat(\sT)}$ satisfies
\begin{equation}
S_{\mu_{\flat(\sT)}} (z) = \frac{1}{rN} \operatorname{Tr} (\Q(z)) \enspace.
\end{equation}
\end{lemma}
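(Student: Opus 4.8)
The plan is to realize $\flat(\sT)$ as the product of a symmetric positive-definite matrix and a real symmetric matrix; such a product is always similar to a real symmetric matrix, which forces the spectrum to be real, and the Stieltjes identity then follows by transporting \eqref{relStieltjesSpec} through that similarity.

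Concretely, I would first record two structural facts. Since $\sT$ is symmetric, each contraction $\pmb{M}_i := \langle \sT, \hat\v_i^{\otimes(d-2)}\rangle$ is a symmetric $N \times N$ matrix, so the block-diagonal factor $\pmb{D} := \mathrm{blockdiag}(\pmb{M}_1, \dots, \pmb{M}_r)$ appearing in \eqref{eq:defbsTgeneral} is symmetric. And $\pmb{\hat{R}}^{\odot (d-1)}_{vv,r}$ is exactly the Gram matrix of the tensors $\hat\v_1^{\otimes(d-1)}, \dots, \hat\v_r^{\otimes(d-1)}$, since $\langle \hat\v_i, \hat\v_j\rangle^{d-1} = \langle \hat\v_i^{\otimes(d-1)}, \hat\v_j^{\otimes(d-1)}\rangle$; under the linear-independence hypothesis this Gram matrix $\pmb{G} := \pmb{\hat{R}}^{\odot (d-1)}_{vv,r}$ is symmetric positive definite. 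Writing $\pmb{\Gamma} := \mathrm{diag}(\hat\gamma_r/\hat\gamma_1, \dots, \hat\gamma_r/\hat\gamma_{r-1}, 1)$ so that $\pmb{\hat W}_r = \pmb{G}^{-1}\pmb{\Gamma}$, a short block computation (the mixed-product property of the Kronecker product) gives
\[
\flat(\sT) = (\pmb{G}^{-1}\otimes\pId)\,\pmb{D}', \qquad \pmb{D}' := (\pmb{\Gamma}\otimes\pId)\,\pmb{D} = \mathrm{blockdiag}\Bigl(\tfrac{\hat\gamma_r}{\hat\gamma_1}\pmb{M}_1, \dots, \tfrac{\hat\gamma_r}{\hat\gamma_{r-1}}\pmb{M}_{r-1}, \pmb{M}_r\Bigr),
\]
and $\pmb{D}'$ is still symmetric, being a real rescaling of each block of a block-diagonal symmetric matrix. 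Since $\pmb{G}^{-1}\otimes\pId$ is symmetric positive definite, it has a symmetric positive-definite square root $\pmb{G}^{-1/2}\otimes\pId$, and conjugating by $\pmb{P} := \pmb{G}^{1/2}\otimes\pId$ yields $\pmb{P}\,\flat(\sT)\,\pmb{P}^{-1} = (\pmb{G}^{-1/2}\otimes\pId)\,\pmb{D}'\,(\pmb{G}^{-1/2}\otimes\pId) =: \pmb{A}$, a real symmetric matrix. Hence $\flat(\sT)$ is similar to $\pmb{A}$, so all its eigenvalues are real (and it is diagonalizable), and $\mu_{\flat(\sT)} = \mu_{\pmb{A}}$.

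For the second assertion, from $\Q(z) = (\flat(\sT) - z\pId)^{-1}$ I get $\pmb{P}\,\Q(z)\,\pmb{P}^{-1} = (\pmb{A} - z\pId)^{-1}$, hence $\Tr \Q(z) = \Tr\bigl((\pmb{A} - z\pId)^{-1}\bigr)$; applying \eqref{relStieltjesSpec} to the $rN \times rN$ symmetric matrix $\pmb{A}$ then gives $\frac{1}{rN}\Tr\Q(z) = S_{\mu_{\pmb{A}}}(z) = S_{\mu_{\flat(\sT)}}(z)$. The only real content is the factorization $\flat(\sT) = (\pmb{G}^{-1}\otimes\pId)\pmb{D}'$ with $\pmb{D}'$ symmetric, so I expect the main (and quite mild) obstacle to be keeping the Kronecker/block bookkeeping straight — in particular checking that $\pmb{\hat{R}}^{\odot (d-1)}_{vv,r}$ is genuinely the Gram matrix of the $\hat\v_i^{\otimes(d-1)}$'s, so that linear independence promotes positive semidefiniteness to strict positive definiteness; after that the conclusion is the textbook fact that the product of a positive-definite matrix with a symmetric matrix has real spectrum.
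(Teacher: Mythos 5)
Your proof is correct and follows essentially the same route as the paper: both arguments exploit that $\pmb{\hat{R}}^{\odot (d-1)}_{vv,r}$ is the (positive-definite) Gram matrix of the $\hat\v_i^{\otimes(d-1)}$, symmetrize $\flat(\sT)$ by conjugating with the square root of that matrix (tensored with $\pId$), and then use invariance of the spectrum and of the trace of the resolvent under similarity to get both the realness of the eigenvalues and the identity $S_{\mu_{\flat(\sT)}}(z)=\frac{1}{rN}\Tr\Q(z)$. Your explicit factorization $\flat(\sT)=(\pmb{G}^{-1}\otimes\pId)\pmb{D}'$ is just a slightly more careful writing of the same computation.
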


\begin{proof}
See Appendix~\ref{subsec:lemeigenreal}.
\end{proof}

\begin{remark}
    From the proof of Proposition~\ref{prop:lowrkappopt2} we can see that $\pmb{\hat{R}}^{\odot (d-1)}_{vv,r}$ is positive-definite if and only if 
    ${\hat \v_1}^{\otimes (d-1)}, \dots, {\hat \v_r}^{\otimes (d-1)}$ are linearly independent.
    Here the condition that ${\hat \v_1}^{\otimes (d-1)}, \dots, {\hat \v_r}^{\otimes (d-1)}$ are linearly independent is weaker than requiring $\hat \v_1, \dots, \hat \v_r$ to be linearly independent. For example, for linearly dependent vectors $\hat \v_1 = (1, 0)$, $\hat \v_2 = (0, 1)$, and $\hat \v_3 = (1, 1)$, the powers ${\hat \v_1}^{\otimes 3}, {\hat \v_2}^{\otimes 3}, {\hat \v_3}^{\otimes 3}$ are linearly independent.
\end{remark}


\section{Asymptotic spectrum of $\flat(\sT)$}\label{sec:spectrum}
\subsection{Sequence of critical points with fully convergent summary statistics}\label{sec:full}
Consider the following assumption where we assume that the summary statistics of the $r$ estimated signals are converging.
\begin{assumption}\label{assump:general}
The sequence $(\hat\gamma_1, \dots,\hat\gamma_r, \hat\v_1, \dots,\hat\v_r)$ satisfies 
\[
 \hat\gamma_i \xrightarrow{\text{a.s.}} \gamma_i \text{\ for $1\leq i\leq r$ }, \pmb{\hat R}_{vv,r} \xrightarrow{\text{a.s.}} \pmb{R}_{vv,r}\in\mathbb{R}^{r\times r} \text{such that $\pmb{ R}^{\odot (d-1)}_{vv,r}$ is invertible.} 
\]
\end{assumption}
From the proof of Proposition~\ref{prop:lowrkappopt2}, we can see Assumption~\ref{assump:general} indicates that $\pmb{ R}^{\odot (d-1)}_{vv,r}$ is positive-definite. It is immediate to see that Assumption~\ref{assump:general} yields the convergence of $\pmb{\hat W}_r$,
\begin{equation}\label{eq:Wdef2}
\pmb{\hat W}_{r}\xrightarrow{a.s.} \pmb{W}_{r} = ( \pmb{ R}^{\odot (d-1)}_{vv,r})^{-1}
\begin{bmatrix}
    \frac{\gamma_{r}}{\gamma_1}  & 0 & \dots  & 0 \\
    0 & \frac{\gamma_{r}}{\gamma_2}  & \dots  & 0 \\
    \vdots & \vdots & \ddots & \vdots \\
    0 & 0 & \dots  & 1
\end{bmatrix}.
\end{equation}
Since $\pmb{W}_{r}$ is the product of a symmetric positive-definite matrix and a symmetric matrix, $\pmb{W}_{r}$ is diagonalizable. By applying techniques from random matrix theory, we have the following property of the limits of the Stieltjes transforms $\frac{1}{N} \Tr \Q^{ij}(z)$.

\begin{theorem}\label{thm:generalized_measure}
Define $\pmb{P}_r$ and $\lvert\kappa_1^{(r)}\rvert\geq \dots\geq \lvert\kappa_r^{(r)}\rvert$ through the following eigenvalue decomposition
\begin{equation}\label{eq:Pdef2}
\frac{2}{\sqrt{d(d-1)}}\pmb{W}_{r} = \pmb{P}_r\begin{bmatrix}
    \kappa_1^{(r)}  & 0  & 0 \\
     0 & \ddots & 0 \\
    0 & 0   & \kappa_r^{(r)}\end{bmatrix}\pmb{P}_r^{-1}.
\end{equation}
Under Assumption~\ref{assump:general}, when
\begin{equation}
|\gamma_r| \geq (d-1) \lvert\kappa_1^{(r)}\rvert,
\end{equation}
the quantity $\frac{1}{N} \Tr \Q^{ij}(z)$ converges almost surely to a deterministic function $g_{ij}(z)$ which is complex analytic on $\mathbb{C} \setminus \mathbb{R}$ as $N \to \infty$ for all $i, j\in [r]$. Let $\pmb{G}(z)$ be the matrix whose $(i, j)$th entry is $g_{ij}(z)$. Then
\begin{equation} \label{eq:measurefpeqmatf_noise}
\pmb{G}(z) =
\pmb{P}_r\begin{pmatrix} \zeta(\kappa_1^{(r)}) & 0 & 0 \\ 0 & \ddots & 0 \\ 0 & 0 &\zeta(\kappa_r^{(r)})\end{pmatrix}\pmb{P}_r^{-1},
\quad \text{\ where \ }
\zeta(\kappa) = \left\{\begin{array}{ll}
\frac{2}{\kappa^2}\left(-z+\sqrt{z^2-\kappa^2}\right) & \text{if $\kappa \neq 0$}\\
\frac{-1}{z} & \text{if $\kappa=0$}
\end{array}\right..
\end{equation}
\end{theorem}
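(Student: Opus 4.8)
The plan is to compute the limit of $\frac{1}{N}\Tr\Q^{ij}(z)$ by a resolvent/self-consistent-equation argument, exploiting the block structure of $\flat(\sT)$ together with the fact that, up to the deterministic similarity transformation $\pmb{\hat W}_r$, the matrix $\flat(\sT)$ is built out of the $r$ contracted-noise blocks $\langle\sT,\hat\v_i^{\otimes(d-2)}\rangle$. First I would write $\sT=\sum_{k}\beta_k\u_k^{\otimes d}+\tfrac{1}{\sqrt N}\sX$ and note that each contraction $\langle\sT,\hat\v_i^{\otimes(d-2)}\rangle$ splits as a rank-$r$ deterministic perturbation plus $\tfrac{1}{\sqrt N}\langle\sX,\hat\v_i^{\otimes(d-2)}\rangle$; since we are after the bulk spectral measure (not the outliers), the rank-$r$ part is negligible in the limiting Stieltjes transform, so the essential object is the $rN\times rN$ matrix $(\pmb{\hat W}_r\otimes\pId)\,\mathrm{blockdiag}\bigl(\tfrac{1}{\sqrt N}\langle\sX,\hat\v_i^{\otimes(d-2)}\rangle\bigr)$. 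Using Assumption~\ref{assump:general} to replace $\pmb{\hat W}_r$ by its deterministic limit $\pmb{W}_r$, and conjugating by $\pmb P_r\otimes\pId$ (which diagonalizes $\tfrac{2}{\sqrt{d(d-1)}}\pmb W_r$), I would reduce to understanding, for each eigenvalue $\kappa=\kappa_\ell^{(r)}$ of $\tfrac{2}{\sqrt{d(d-1)}}\pmb W_r$, the limiting resolvent of $\tfrac{\sqrt{d(d-1)}}{2}\kappa$ times a single symmetric-Gaussian contracted matrix $M_i:=\tfrac{1}{\sqrt N}\langle\sX,\hat\v_i^{\otimes(d-2)}\rangle$.

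The heart of the computation is therefore a one-block statement: the contracted Gaussian matrix $M_i$ has a semicircular limiting spectral distribution with variance $\tfrac{d(d-1)}{4}$ (equivalently, $\tfrac{2}{\sqrt{d(d-1)}}M_i$ is standard semicircular), so that $\tfrac1N\Tr(cM_i-z)^{-1}\to \tfrac{1}{c}\,s_{\mathrm{sc}}(z/c)$ with the right scaling, which is exactly the function $\zeta$ in the statement: one checks $\zeta(\kappa)$ solves $\tfrac{\kappa^2}{4}\zeta^2+z\zeta+1=0$, the semicircle self-consistent equation with the correct variance. I would establish the semicircle law for $M_i$ by the resolvent method: apply the complement (Schur) formula listed in Section~\ref{sec:pre} to a diagonal entry $Q_{aa}$ of $(M_i-z)^{-1}$, then use Stein's lemma (Gaussian integration by parts) on the entries of $\sX$ — being careful about the symmetrized covariance structure \eqref{eq:var}, which is what produces the variance constant $\tfrac{d(d-1)}{4}$ after accounting for how many index patterns survive the contraction — together with the Poincaré inequality (Lemma~\ref{lem:Poincare}) to show the off-diagonal and fluctuation terms are $o(1)$, arriving at the fixed-point equation $S=(-z-\tfrac{d(d-1)}{4}S)^{-1}$ for $S=\lim\tfrac1N\Tr(M_i-z)^{-1}$. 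For the cross-blocks, the same Gaussian-integration-by-parts bookkeeping shows $\tfrac1N\Tr(Q^{ij})$ for the non-diagonalized picture, or rather in the $\pmb P_r$-diagonalized picture the off-diagonal blocks vanish in normalized trace, which is what yields the clean conjugated form $\pmb G(z)=\pmb P_r\,\mathrm{diag}(\zeta(\kappa_\ell^{(r)}))\,\pmb P_r^{-1}$.

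The main obstacle, I expect, is twofold. First, $\hat\v_i$ depends on the noise $\sX$ (the critical point is not independent of $\sX$), so the Gaussian integration by parts must be done carefully: differentiating an entry of $\sX$ also differentiates $\hat\v_i$ through the KKT equations, producing extra terms that one must argue are lower order — this is the familiar "leave-one-out"/concentration subtlety and is where Assumption~\ref{assump:general} (a.s.\ convergence of the summary statistics) and the Poincaré inequality do the real work; one likely needs an a priori bound showing the derivative $\partial\hat\v_i/\partial\sX_{\bm j}$ is $O(N^{-1/2})$ in a suitable sense. Second, the threshold hypothesis $|\gamma_r|\ge(d-1)|\kappa_1^{(r)}|$ must be used to guarantee that $\hat\gamma_r$ — the eigenvalue of $\flat(\sT)$ singled out in Proposition~\ref{prop:lowrkappopt2} — lies outside (or at the edge of) the support of the limiting bulk, so that the resolvent $\Q(z)$ is well-controlled near the relevant region and the inversion/analytic-continuation steps (via the Stieltjes–Perron theorems of Section~\ref{sec:pre}) are legitimate; tracking where exactly this inequality is needed, and that the branch of $\sqrt{z^2-\kappa^2}$ is the one making $\zeta$ a genuine Stieltjes transform (i.e.\ $\zeta(\kappa)\sim -1/z$ at infinity, $\mathfrak{Im}\,\zeta>0$ on $\mathbb C_+$), is the delicate part of assembling the final formula.
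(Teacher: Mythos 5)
Your toolkit matches the paper's (Stein's lemma with implicit-function-theorem control of $\partial\hat\v_i/\partial X_{\bm i}$, Poincar\'e-based concentration, negligibility of the rank-$r$ signal via a Woodbury-type argument, and the role of the threshold $|\gamma_r|\ge(d-1)|\kappa_1^{(r)}|$ in keeping the Jacobian of the KKT map invertible), but the central reduction step is a genuine gap. You propose to conjugate $(\pmb W_r\otimes\pId)\,\mathrm{blockdiag}\bigl(\tfrac{1}{\sqrt N}\langle\sX,\hat\v_i^{\otimes(d-2)}\rangle\bigr)$ by $\pmb P_r\otimes\pId$ and thereby reduce to $r$ independent one-block semicircle laws, one for each $\kappa_\ell^{(r)}$. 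This does not work as stated: $\mathrm{blockdiag}(M_1,\dots,M_r)$ does not commute with $\pmb P_r\otimes\pId$, so the conjugated matrix is not block-diagonal; its $(\ell,m)$ block is a linear mixture $\sum_i c^{\ell m}_i M_i$ of the $r$ contracted matrices, which are mutually correlated (they are contractions of the same noise $\sX$ against different $\hat\v_i$, with correlations governed by the overlaps $\langle\hat\v_i,\hat\v_j\rangle$). The statement you invoke to close this — that in the $\pmb P_r$-picture the off-diagonal blocks vanish in normalized trace — is a property of the \emph{limit} and is exactly equivalent to what has to be proven; it cannot be extracted from a single-block semicircle law. The paper's route is the opposite order: the Stein-calculus bookkeeping is carried out on the full coupled system of traces $\tfrac1N\E\Tr[\Q^{su}\A^{tt}]$, yielding the matrix fixed-point equation $\tfrac{1}{d(d-1)}\pmb G(z)\pmb W_r\pmb G(z)\pmb W_r+z\pmb G(z)+\pId=\pmb 0$, and only \emph{then} does one observe that this equation forces $\pmb G(z)$ to be diagonalized by $\pmb P_r$, with eigenvalues solving $\tfrac{\kappa^2}{4}\zeta^2+z\zeta+1=0$. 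Your plan would have to derive this coupled equation anyway, so the "reduce to one block" step should be dropped rather than patched.

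A secondary issue: your normalization of the single contracted block is inconsistent. The limiting spectral law of $M_i=\tfrac{1}{\sqrt N}\langle\sX,\hat\v_i^{\otimes(d-2)}\rangle$ is a semicircle of variance $\tfrac{1}{d(d-1)}$ (support $[-\tfrac{2}{\sqrt{d(d-1)}},\tfrac{2}{\sqrt{d(d-1)}}]$, as in the $s=1$ case of Theorem~\ref{thm:mostgeneral}), not $\tfrac{d(d-1)}{4}$; equivalently $\sqrt{d(d-1)}\,M_i$ is standard semicircular, and the single-block fixed point is $S=(-z-\tfrac{1}{d(d-1)}S)^{-1}$. Your final consistency check ($\zeta(\kappa)$ solving $\tfrac{\kappa^2}{4}\zeta^2+z\zeta+1=0$) is correct, but it is only compatible with the correct variance, so the constants as written would not assemble into \eqref{eq:measurefpeqmatf_noise}. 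Your identification of where the threshold hypothesis enters (bounding the resolvent at $z=\gamma_r/(d-1)$ so that the Jacobian block $\pmb C$ in the implicit-function argument has bounded inverse) and of the concentration step is otherwise in line with the paper's proof.
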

\begin{proof}
See Appendix~\ref{sec:prooftotal}.
\end{proof}

In particular, with this theorem, we can explicitly give the limiting empirical spectral distribution of $\flat(\sT)$.
\begin{theorem}\label{th:explicitmu_noise}
The empirical spectral measure of $\flat(\sT)$ converges weakly almost surely to a deterministic measure $\mu$ whose Stieltjes transform is $g(z) = \frac1r \sum_{i=1}^r g_{ii}(z)$, i.e.,
\begin{equation}\label{eq:measure_limit}
\mu(dx) = \frac{1}{r}\sum_{i=1}^r \mu_{\kappa_i^{(r)}}(dx)\text{\quad with\quad}
\mu_{\kappa}(dx) = \left\{\begin{array}{ll}
\frac{2}{\pi\kappa^2}\sqrt{\bigg(\kappa^2 - x^2\bigg)_+}dx  & \text{if $\kappa\neq 0$}\\
\delta_{0}( dx) & \text{if $\kappa = 0$}
\end{array}\right.,
\end{equation}
which is a measure supported on $[-\lvert\kappa_1^{(r)}\rvert, \lvert\kappa_1^{(r)}\rvert]$.
\end{theorem}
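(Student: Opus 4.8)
The plan is to derive Theorem~\ref{th:explicitmu_noise} as a direct consequence of Theorem~\ref{thm:generalized_measure} together with the Stieltjes--Perron inversion formula. By Lemma~\ref{lem:eigenreal}, the Stieltjes transform of $\mu_{\flat(\sT)}$ equals $\frac{1}{rN}\Tr\Q(z) = \frac1r\sum_{i=1}^r \frac1N\Tr\Q^{ii}(z)$. Theorem~\ref{thm:generalized_measure} gives that each $\frac1N\Tr\Q^{ii}(z)\to g_{ii}(z)$ almost surely, so $S_{\mu_{\flat(\sT)}}(z)\to g(z) := \frac1r\sum_{i=1}^r g_{ii}(z)$ pointwise on $\mathbb{C}_+$. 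By the equivalence theorem recalled in Section~\ref{sec:pre} (weak convergence of measures $\Leftrightarrow$ pointwise convergence of Stieltjes transforms), it remains to (i) identify $g(z)$ as the Stieltjes transform of the explicit measure $\mu$ in~\eqref{eq:measure_limit}, and (ii) note that this forces weak almost sure convergence $\mu_{\flat(\sT)}\to\mu$.

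For step (i), I would first use~\eqref{eq:measurefpeqmatf_noise}: since $\pmb{G}(z) = \pmb{P}_r\,\mathrm{diag}(\zeta(\kappa_1^{(r)}),\dots,\zeta(\kappa_r^{(r)}))\,\pmb{P}_r^{-1}$, taking the trace (which is conjugation-invariant) gives $\frac1r\Tr\pmb{G}(z) = \frac1r\sum_{i=1}^r \zeta(\kappa_i^{(r)})$; and $\frac1N\sum_{i,j}$ is not needed --- only the diagonal blocks contribute to $g(z)$ because $g_{ii}(z)$ are exactly the diagonal entries of $\pmb{G}(z)$, so $g(z) = \frac1r\sum_i g_{ii}(z) = \frac1r\Tr\pmb{G}(z) = \frac1r\sum_{i=1}^r \zeta(\kappa_i^{(r)})$. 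Then the core computation is to check that for each fixed $\kappa\neq 0$, the function $\zeta(\kappa) = \frac{2}{\kappa^2}\bigl(-z+\sqrt{z^2-\kappa^2}\bigr)$ (with the branch of the square root chosen so that $\sqrt{z^2-\kappa^2}\sim z$ as $|z|\to\infty$, forcing $\zeta(\kappa)\sim -1/z$, consistent with a probability measure) is precisely the Stieltjes transform $S_{\mu_\kappa}(z) = \int \frac{1}{t-z}\,\mu_\kappa(dt)$ of the rescaled semicircle law $\mu_\kappa(dx) = \frac{2}{\pi\kappa^2}\sqrt{(\kappa^2-x^2)_+}\,dx$ supported on $[-|\kappa|,|\kappa|]$. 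This is the standard semicircle Stieltjes-transform identity after the substitution scaling $x\mapsto \kappa x/2$ (the classical semicircle on $[-2,2]$ has Stieltjes transform $\frac12(-z+\sqrt{z^2-4})$); I would either quote it or verify it by applying the Stieltjes--Perron inversion formula, computing $\frac1\pi\lim_{\epsilon\to 0}\mathfrak{Im}\,\zeta(\kappa)(x+i\epsilon)$ and recovering the claimed density on $(-|\kappa|,|\kappa|)$ and zero outside. The degenerate case $\kappa = 0$ is immediate since $-1/z = \int\frac{1}{t-z}\,\delta_0(dt)$. Hence $g(z) = \frac1r\sum_{i=1}^r S_{\mu_{\kappa_i^{(r)}}}(z) = S_{\mu}(z)$ with $\mu = \frac1r\sum_i \mu_{\kappa_i^{(r)}}$, and since each $\mu_{\kappa_i^{(r)}}$ is supported in $[-|\kappa_i^{(r)}|,|\kappa_i^{(r)}|]\subseteq[-|\kappa_1^{(r)}|,|\kappa_1^{(r)}|]$ by the ordering $|\kappa_1^{(r)}|\geq\cdots\geq|\kappa_r^{(r)}|$, so is $\mu$.

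For step (ii), the almost sure weak convergence follows because the convergence $S_{\mu_{\flat(\sT)}}(z)\to g(z)$ from Theorem~\ref{thm:generalized_measure} holds almost surely for each $z$ in a countable dense subset $D\subseteq\mathbb{C}_+$ (which has an accumulation point in $\mathbb{C}_+$), so on an event of probability one it holds simultaneously for all $z\in D$; the equivalence theorem then upgrades this to weak convergence $\mu_{\flat(\sT)}\to\mu$ on that event.

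The main obstacle here is not conceptual but bookkeeping: one must be careful about the branch of $\sqrt{z^2-\kappa^2}$ so that $\zeta(\kappa)$ is genuinely a Stieltjes transform (i.e.\ maps $\mathbb{C}_+\to\mathbb{C}_+$ and decays like $-1/z$ at infinity) rather than its conjugate, and one must confirm that this is the branch implicitly produced by the resolvent analysis in the proof of Theorem~\ref{thm:generalized_measure}. Everything else --- linearity of the trace, conjugation invariance, and the semicircle inversion --- is routine once the branch is pinned down.
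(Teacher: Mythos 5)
Your proposal is correct and follows essentially the same route as the paper: take the trace of $\pmb{G}(z)$ from Theorem~\ref{thm:generalized_measure}, so that $g(z)=\frac1r\sum_i\zeta(\kappa_i^{(r)})$, and apply the Stieltjes--Perron inversion to each summand to recover the rescaled semicircle densities (and $\delta_0$ when $\kappa=0$). Your extra care about the branch of the square root and the countable-dense-set argument for almost sure weak convergence only makes explicit steps the paper leaves implicit.
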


\begin{proof}
See Appendix~\ref{subsec:proofexplicitmu}.
\end{proof}

Because the statement of Theorem~\ref{th:explicitmu_noise} does not depend on the choice of signal-to-noise ratios, as a direct corollary of Theorem~\ref{th:explicitmu_noise}, assuming $\beta_1 = \cdots = \beta_r = 0$ in Model~\ref{eq:tensormodel} gives us the pure noise case, namely,
\begin{corollary}
    For the pure noise $\frac{1}{\sqrt{N}} \sX$, the empirical spectral measure of $\flat(\frac{1}{\sqrt{N}} \sX)$ is given by Equation~\eqref{eq:measure_limit}.
\end{corollary}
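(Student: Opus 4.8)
The plan is to deduce the statement as the $\beta_1=\dots=\beta_r=0$ specialization of Theorem~\ref{th:explicitmu_noise}. The first step is to note that setting all signal-to-noise ratios to zero in Model~\eqref{eq:tensormodel} leaves exactly $\sT=\frac{1}{\sqrt{N}}\sX$, so that a sequence of critical points of the best rank-$r$ approximation of the pure noise tensor $\frac{1}{\sqrt{N}}\sX$ is nothing but a sequence of critical points of Model~\eqref{eq:tensormodel} with vanishing signal, and the matrix $\flat(\frac{1}{\sqrt{N}}\sX)$ built from such a tuple $(\hat\gamma_1,\dots,\hat\gamma_r,\hat\v_1,\dots,\hat\v_r)$ through \eqref{eq:Wdef}--\eqref{eq:defbsTgeneral} coincides with $\flat(\sT)$ for that choice. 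Thus nothing new has to be defined; the corollary is a re-reading of Theorem~\ref{th:explicitmu_noise} in this instance.

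The second step is to check that neither the hypotheses nor the conclusion of Theorem~\ref{th:explicitmu_noise} involve the $\beta_i$. On the hypothesis side, Assumption~\ref{assump:general} constrains only the limits $\gamma_i$ of $\hat\gamma_i$ and the limit $\pmb R_{vv,r}$ of the Gram matrix $\pmb{\hat R}_{vv,r}$, together with invertibility of $\pmb R^{\odot (d-1)}_{vv,r}$ and the threshold $|\gamma_r|\ge(d-1)|\kappa_1^{(r)}|$; none of these reference the signal. On the conclusion side, the limiting measure $\mu$ of \eqref{eq:measure_limit} is assembled solely from the eigenvalues $\kappa_1^{(r)},\dots,\kappa_r^{(r)}$ of $\frac{2}{\sqrt{d(d-1)}}\pmb W_r$, and $\pmb W_r$ in \eqref{eq:Wdef2} depends only on $\pmb R^{\odot (d-1)}_{vv,r}$ and the ratios $\gamma_i/\gamma_1$. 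Hence, given any sequence of critical points of the rank-$r$ approximation of $\frac{1}{\sqrt{N}}\sX$ satisfying Assumption~\ref{assump:general} and $|\gamma_r|\ge(d-1)|\kappa_1^{(r)}|$, Theorem~\ref{th:explicitmu_noise} applies verbatim with $\sT=\frac{1}{\sqrt{N}}\sX$ and yields $\mu_{\flat(\frac{1}{\sqrt{N}}\sX)}\to\mu$ weakly almost surely, with $\mu$ as in \eqref{eq:measure_limit}.

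The only point needing genuine care, and what I would treat as the main obstacle, is to confirm that the proof of Theorem~\ref{th:explicitmu_noise} (via Theorem~\ref{thm:generalized_measure} and the resolvent computations of Appendix~\ref{sec:prooftotal}) does not covertly exploit $\beta_i\neq0$. This is indeed harmless: in $\flat(\sT)$ the signal enters each block $\langle\sT,\hat\v_j^{\otimes(d-2)}\rangle$ only through $\langle\sum_i\beta_i\u_i^{\otimes d},\hat\v_j^{\otimes(d-2)}\rangle=\sum_i\beta_i\langle\u_i,\hat\v_j\rangle^{d-2}\u_i\u_i^{\top}$, which adds a deterministic perturbation of rank at most $r^2$ uniformly in $N$; such a bounded-rank perturbation affects neither the Gaussian concentration step (Stein's lemma and the Poincar\'e inequality of Lemma~\ref{lem:Poincare}) nor the fixed-point equation for $\pmb G(z)$, hence leaves the limiting bulk unchanged — so the computation at $\beta_i=0$ is the same computation, only with the signal term absent. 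Equivalently, one simply invokes the statement of Theorem~\ref{th:explicitmu_noise} as already proved for arbitrary $\beta_i$ and instantiates it at $\beta_i=0$. For honesty I would add that, like Theorem~\ref{th:explicitmu_noise} itself, the corollary is conditional on the existence of a sequence of critical points of the rank-$r$ approximation of $\frac{1}{\sqrt{N}}\sX$ with fully convergent summary statistics, in line with the conjecture discussed in Section~\ref{sec:setting}.
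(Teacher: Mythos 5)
Your proposal is correct and matches the paper's own argument: the corollary is obtained by specializing Theorem~\ref{th:explicitmu_noise} to $\beta_1=\cdots=\beta_r=0$, observing that neither Assumption~\ref{assump:general} nor the limiting measure in \eqref{eq:measure_limit} depends on the signal-to-noise ratios. Your extra remark about the rank-$\le r^2$ signal perturbation not affecting the bulk is exactly the mechanism used in Appendix~\ref{sec:prooftotal} (which in fact treats the pure-noise case first and then absorbs the signal via a Woodbury/finite-rank argument), so nothing further is needed.
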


In the rank-two case, i.e., when $r=2$, we can provide explicit formulas for the eigenvalue decomposition defined in Equation~\ref{eq:Pdef2}. 
Indeed, if we note $\lambda=\lim_{N \rightarrow\infty}\langle\hat\v_1,\hat\v_2\rangle^{d-1}$ and $\nu = \frac{\gamma_2}{\gamma_1}$, we have the following eigendecomposition
\begin{equation}
\frac{2}{(1-\lambda^2)\sqrt{d(d-1)}}\begin{bmatrix} \nu & - \lambda \\ - \lambda\nu & 1 \end{bmatrix} = \pmb{P}_2\begin{pmatrix} \kappa_1 & 0\\ 0 & \kappa_2\end{pmatrix}\pmb{P}_2^{-1}
\end{equation}
for some $\pmb{P}_2$, where
\begin{equation}\label{eq:kappas}
\begin{dcases}
\kappa_1 = \frac{\nu + 1 + \sqrt{(\nu+1)^2-4\nu(1-\lambda^2)}}{\sqrt{d(d-1)}(1-\lambda^2)} \\
\kappa_2 = \frac{\nu+1 - \sqrt{(\nu+1)^2-4\nu(1-\lambda^2)}}{\sqrt{d(d-1)}(1-\lambda^2)}\end{dcases}.
\end{equation}
Then Theorem~\ref{thm:generalized_measure} can then be rewritten as follows.
\begin{corollary}\label{thm:pure_noise_Stieltjes}
Under Assumption~\ref{assump:general},when $|\gamma_2|\geq (d-1)|\kappa_1|$, $\frac{1}{N} \Tr \Q^{ij}(z)$ converges almost surely to a deterministic function $g_{ij}(z)$ which is complex analytic on $\mathbb{C} \setminus \mathbb{R}$ as $N \to \infty$ for all $i, j\in [2]$. Moreover, 
\begin{equation} 
\pmb{G}(z) =
\pmb{P}_2\begin{pmatrix} \zeta(\kappa_1) & 0  \\  0 &\zeta(\kappa_2)\end{pmatrix}\pmb{P}_2^{-1},
\end{equation}
where $\kappa_1$ and $\kappa_2$ are given in \eqref{eq:kappas}, and $\zeta$ is defined in \eqref{eq:measurefpeqmatf_noise}.
\end{corollary}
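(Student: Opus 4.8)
The plan is to derive Corollary~\ref{thm:pure_noise_Stieltjes} as an immediate specialization of Theorem~\ref{thm:generalized_measure} to the case $r = 2$, so the only real work is to verify the explicit eigendecomposition \eqref{eq:kappas}. First I would unpack the definition of $\pmb{W}_2$ in \eqref{eq:Wdef2}. For $r = 2$ the limiting Gram matrix is $\pmb{R}_{vv,2} = \begin{bmatrix} 1 & \rho \\ \rho & 1 \end{bmatrix}$ with $\rho = \lim_N \langle \hat\v_1, \hat\v_2 \rangle$, so its $(d-1)$th Hadamard power is $\pmb{R}^{\odot(d-1)}_{vv,2} = \begin{bmatrix} 1 & \lambda \\ \lambda & 1 \end{bmatrix}$ with $\lambda = \rho^{d-1}$. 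Inverting, $(\pmb{R}^{\odot(d-1)}_{vv,2})^{-1} = \frac{1}{1-\lambda^2}\begin{bmatrix} 1 & -\lambda \\ -\lambda & 1 \end{bmatrix}$, and the diagonal factor in \eqref{eq:Wdef2} is $\operatorname{diag}(\gamma_2/\gamma_1, 1) = \operatorname{diag}(\nu, 1)$. Multiplying gives $\pmb{W}_2 = \frac{1}{1-\lambda^2}\begin{bmatrix} \nu & -\lambda \\ -\lambda\nu & 1 \end{bmatrix}$, so that $\frac{2}{\sqrt{d(d-1)}}\pmb{W}_2 = \frac{2}{(1-\lambda^2)\sqrt{d(d-1)}}\begin{bmatrix} \nu & -\lambda \\ -\lambda\nu & 1 \end{bmatrix}$, which is exactly the matrix on the left-hand side of the claimed eigendecomposition.

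Next I would compute the eigenvalues of this $2\times 2$ matrix directly. Writing $c = \frac{2}{(1-\lambda^2)\sqrt{d(d-1)}}$ and $\pmb{A} = \begin{bmatrix} \nu & -\lambda \\ -\lambda\nu & 1 \end{bmatrix}$, the trace is $\operatorname{Tr}\pmb{A} = \nu + 1$ and the determinant is $\det\pmb{A} = \nu - \lambda^2\nu = \nu(1-\lambda^2)$. Hence the eigenvalues of $\pmb{A}$ are $\frac{(\nu+1) \pm \sqrt{(\nu+1)^2 - 4\nu(1-\lambda^2)}}{2}$, and multiplying by $c$ gives precisely $\kappa_1, \kappa_2$ as in \eqref{eq:kappas}. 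One should note that the discriminant $(\nu+1)^2 - 4\nu(1-\lambda^2) = (\nu-1)^2 + 4\nu\lambda^2 \ge 0$ (using $\nu = \gamma_2/\gamma_1 \in \mathbb{R}$, and in fact $|\nu|\le 1$ by \eqref{eq:betarelation}-type ordering of the $\hat\gamma_j$), so $\kappa_1, \kappa_2$ are real, consistent with Theorem~\ref{thm:generalized_measure} requiring a real eigendecomposition of $\frac{2}{\sqrt{d(d-1)}}\pmb{W}_2$; moreover one checks $|\kappa_1| \ge |\kappa_2|$ to match the ordering convention. Then $\pmb{P}_2$ is simply the matrix of eigenvectors of $\pmb{A}$ (equivalently of $\pmb{W}_2$), and the statement $\frac{2}{(1-\lambda^2)\sqrt{d(d-1)}}\pmb{A} = \pmb{P}_2 \operatorname{diag}(\kappa_1,\kappa_2) \pmb{P}_2^{-1}$ holds.

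Finally, with $\pmb{P}_2$ and $\kappa_1^{(2)} = \kappa_1$, $\kappa_2^{(2)} = \kappa_2$ identified this way, I would invoke Theorem~\ref{thm:generalized_measure} verbatim with $r = 2$: under Assumption~\ref{assump:general} and the condition $|\gamma_2| \ge (d-1)|\kappa_1|$ (which is $|\gamma_r| \ge (d-1)|\kappa_1^{(r)}|$ for $r=2$), the quantities $\frac{1}{N}\operatorname{Tr}\Q^{ij}(z)$ converge almost surely to analytic functions $g_{ij}(z)$ on $\mathbb{C}\setminus\mathbb{R}$, and $\pmb{G}(z) = \pmb{P}_2 \operatorname{diag}(\zeta(\kappa_1), \zeta(\kappa_2)) \pmb{P}_2^{-1}$ with $\zeta$ as in \eqref{eq:measurefpeqmatf_noise}. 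This is exactly the content of the corollary.

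There is no genuine obstacle here: the statement is a transcription of Theorem~\ref{thm:generalized_measure} into the $r=2$ notation, and the only computational content is the elementary $2\times 2$ eigenvalue computation above. The one point deserving a line of care is confirming that the discriminant is nonnegative so that the $\kappa_i$ are real (needed for consistency with the diagonalization over $\mathbb{R}$), and that the labeling $|\kappa_1| \ge |\kappa_2|$ agrees with the ordering convention inherited from Theorem~\ref{thm:generalized_measure}; both follow from $\nu \in \mathbb{R}$ and $(\nu+1)^2 - 4\nu(1-\lambda^2) = (\nu-1)^2 + 4\nu\lambda^2$.
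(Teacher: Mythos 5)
Your proposal is correct and follows essentially the same route as the paper: the corollary is read off from Theorem~\ref{thm:generalized_measure} with $r=2$, and the only content is the explicit computation of $\pmb{W}_2$ and its eigenvalues via the trace $\nu+1$ and determinant $\nu(1-\lambda^2)$, which yields \eqref{eq:kappas}. Your extra remark on the nonnegativity of the discriminant $(\nu-1)^2+4\nu\lambda^2$ (using $|\nu|\le 1$ and $|\lambda|\le 1$) is a harmless refinement of what the paper implicitly relies on through the diagonalizability of $\pmb{W}_r$.
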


Figure~\ref{fig:specmeasure} gives an illustration of the almost sure convergence of the spectrum of $\flat(\sT)$. Indeed, from the proof of Theorem~\ref{thm:generalized_measure}, in particular Subsection~\ref{sec:estimateA1}, it is straightforward to verify that the result still hold for sequence of vectors $(\hat\gamma_1\hat\v_1,\dots,\gamma_r\hat\v_r)$ drawn independently from $\sT$. Therefore, in order to illustrate this result for $r=2$, we generate $100$ random symmetric pure noise tensors $\sT$ and independent vectors $\hat\gamma_1\hat\v_1,\hat\gamma_2\hat\v_2$ satisfying
\[
\pmb{\hat R}_{vv, r} = \begin{bmatrix}1 & 0.5\\ 0.5 & 1\end{bmatrix} \text{\quad and \quad} \frac{\hat\gamma_2}{\hat\gamma_1} = 0.4.
\]
In particular, 
Figure~\ref{fig:sub1} depicts the spectrum of $\flat(\sT)$ of order-$3$ of dimension $200$. In~Figure~\ref{fig:sub2}, we illustrate the spectrum of order-$4$ tensors $\sT$ of dimension $100$.

\begin{figure}[h]
\centering
\begin{subfigure}{.5\textwidth}
  \centering
  \caption{order $ = 3$, dimension $ = 200$}
  \includegraphics[scale=0.15]{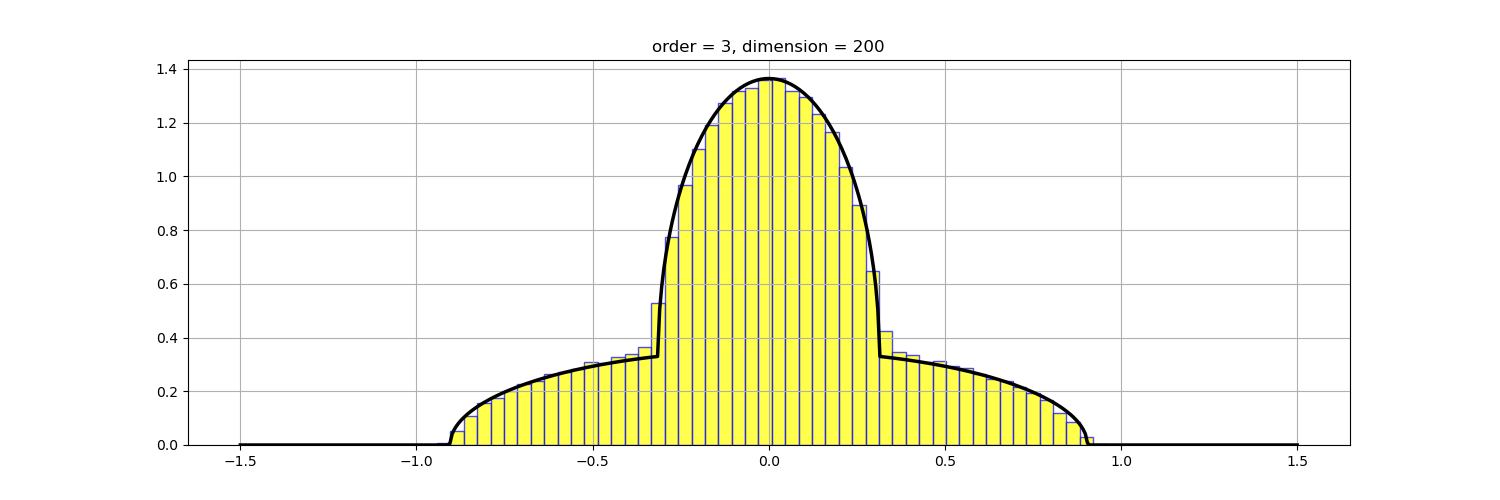}
  \label{fig:sub1}
\end{subfigure}%
\begin{subfigure}{.5\textwidth}
  \centering
 \caption{order $ = 4$, dimension $ = 100$}
 \includegraphics[scale=0.15]{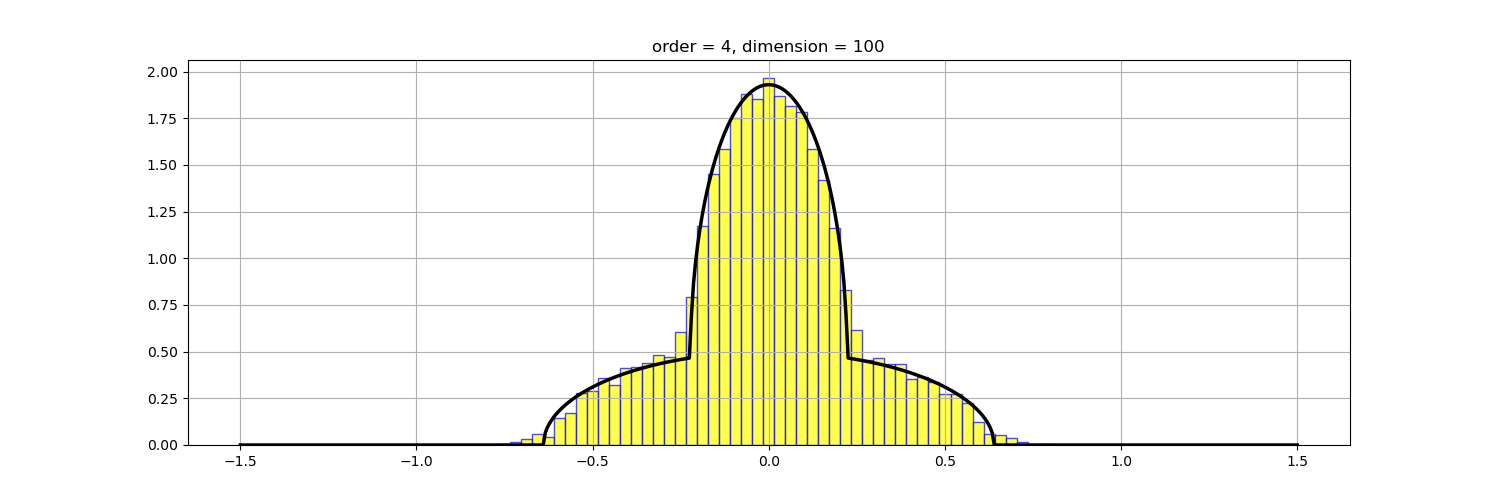}
  \label{fig:sub2}
\end{subfigure}
\caption{Density functions of the spectral measure of Theorem~\ref{thm:generalized_measure} (in black) for $d = 3$ and $d = 4$ and histograms of eigenvalues for one realization of $\flat (\sT)$.}
\label{fig:specmeasure}
\end{figure}

\subsection{Generalization to sequence with partially convergent summary statistics}
We now consider a sequence of critical points such that only the convergence of the summary statistics of $s\leq r$ spikes is ensured while the remaining spikes are assumed to be orthogonal to the first $s$ spikes but without extra convergence assumption. 
\begin{assumption}\label{assump:general0}
The sequence $(\hat\gamma_1, \dots,\hat\gamma_r, \hat\v_1, \dots,\hat\v_r)$ satisfies 
\[
 \hat\gamma_i \xrightarrow{\text{a.s.}} \gamma_i \text{\ for $1\leq i\leq s$}, \quad \pmb{\hat R}_{vv,s} \xrightarrow{\text{a.s.}} \pmb{R}_{vv,s}\in\mathbb{R}^{s\times s}, \quad \langle \hat\v_i,\hat\v_j\rangle =0 \text{ for $1\leq i \leq s < j\leq r$ if $s < r$}.
\]
\end{assumption}
Assumption~\ref{assump:general0} captures several classes of sequence of critical points indexed by some $s\leq r$. In particular, the case $s=r$ corresponds to Assumption~\ref{assump:general}.
Each class of critical points sequences aims at capturing a different possible behavior in terms of convergence which are observed in practice, namely only a fraction of the the estimated signals summary statistics are converging.
Thanks to the orthogonality property of $\hat\v_i$ and  $\hat\v_j$ for $1\leq i \leq s < j\leq r$, we are able to state a convergence result on the partial matrix
\begin{equation}\label{eq:defbsTgeneral2}
\flat_s(\sT) = (\pmb{\hat W}_s\otimes\pId) \begin{bmatrix}
    \langle \sT, \hat\v^{\otimes (d-2)}_1 \rangle & 0  & 0 \\
    0  & \ddots & 0 \\
    0 & 0  & \langle \sT, \hat\v^{\otimes (d-2)}_s \rangle
\end{bmatrix}
\end{equation}
and its resolvent
\begin{equation}
\Q_s(z) = (\flat_s(\sT) -z\pId)^{-1} = \begin{bmatrix}
\Q_s^{11}(z) & \Q_s^{12}(z) & \dots & \Q^{1s}_s(z) \\
\Q_s^{21}(z) & \Q_s^{22}(z) & \dots & \Q^{2s}_s(z) \\
\vdots &\vdots  & \ddots & \vdots \\
\Q^{s1}_s(z) & \Q^{s2}_s(z) & \dots & \Q^{ss}_s(z) \\
\end{bmatrix}.
\end{equation}

We can now state the result generalizing Theorem~\ref{thm:generalized_measure}.
\begin{theorem}\label{thm:mostgeneral}
Define $\pmb{P}_s$ and $\lvert\kappa_1^{(s)}\rvert \geq \dots\geq \lvert\kappa_s^{(s)}\rvert$ defined through the following eigenvalue decomposition
\begin{equation}\label{eq:Pdef2s}
\frac{2}{\sqrt{d(d-1)}}\pmb{W}_{s} = \pmb{P}_s\begin{bmatrix}
    \kappa_1^{(s)}  & 0  & 0 \\
     0 & \ddots & 0 \\
    0 & 0   & \kappa_s^{(s)}\end{bmatrix}\pmb{P}_s^{-1}.
\end{equation}
Under Assumption~\ref{assump:general0}, when
\begin{equation}
|\gamma_s| \geq (d-1)\,\lvert\kappa_1^{(s)}\rvert,
\end{equation}
the quantity $\frac{1}{N} \Tr \Q_s^{ij}(z)$ converges almost surely to a deterministic function $g_{ij,s}(z)$ which is complex analytic on $\mathbb{C} \setminus \mathbb{R}$ as $N \to \infty$ for all $i, j\in [s]$.
Let $\pmb{G}_s(z)$ be the matrix whose $(i, j)$th entry is $g_{ij}(z)$. Then, for $\zeta$  defined in \eqref{eq:measurefpeqmatf_noise},
\begin{equation} \label{eq:measurefpeqmatf_noise_s}
\pmb{G}_s(z) =
\pmb{P}_s\begin{pmatrix} \zeta(\kappa_1^{(s)}) & 0 & 0 \\ 0 & \ddots & 0 \\ 0 & 0 &\zeta(\kappa_s^{(s)})\end{pmatrix}\pmb{P}_s^{-1},
\end{equation}
\end{theorem}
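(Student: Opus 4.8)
The plan is to reduce Theorem~\ref{thm:mostgeneral} to Theorem~\ref{thm:generalized_measure}. The matrix $\flat_s(\sT)$ of~\eqref{eq:defbsTgeneral2} is obtained from the defining formula~\eqref{eq:defbsTgeneral} of $\flat(\sT)$ by replacing $r$ with $s$, and Assumption~\ref{assump:general0}, once restricted to the indices $1,\dots,s$, is precisely Assumption~\ref{assump:general} for a configuration of $s$ spikes: we have $\hat\gamma_i\xrightarrow{\text{a.s.}}\gamma_i$ for $i\le s$ and $\pmb{\hat R}_{vv,s}\xrightarrow{\text{a.s.}}\pmb R_{vv,s}$ with $\pmb R^{\odot(d-1)}_{vv,s}$ invertible (hence positive-definite), so that $\pmb{\hat W}_s\xrightarrow{\text{a.s.}}\pmb W_s$ by the same computation as in~\eqref{eq:Wdef2}. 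First I would set up the resolvent $\Q_s(z)=(\flat_s(\sT)-z\pId)^{-1}$ and re-run the argument of Theorem~\ref{thm:generalized_measure} verbatim with $s$ in place of $r$: the self-consistent fixed-point equation for the block matrix of normalized traces $\frac1N\Tr\Q_s^{ij}(z)$, the diagonalization $\frac{2}{\sqrt{d(d-1)}}\pmb W_s=\pmb P_s\,\mathrm{diag}(\kappa_1^{(s)},\dots,\kappa_s^{(s)})\,\pmb P_s^{-1}$ of~\eqref{eq:Pdef2s}, and the scalar transform $\zeta$ of~\eqref{eq:measurefpeqmatf_noise} all enter exactly as before, producing~\eqref{eq:measurefpeqmatf_noise_s}. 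The threshold $|\gamma_s|\ge(d-1)|\kappa_1^{(s)}|$ plays the identical role that $|\gamma_r|\ge(d-1)|\kappa_1^{(r)}|$ did in Theorem~\ref{thm:generalized_measure}: it selects the correct branch of $\sqrt{z^2-\kappa^2}$ and ensures the contraction property of the fixed-point map on the relevant domain.

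Second, I would verify that nothing in the appendix proof of Theorem~\ref{thm:generalized_measure} actually uses that the tuple is a genuine critical point of the rank-$r$ problem. The resolvent estimates, the Gaussian integration-by-parts (Stein's lemma) manipulations, and the concentration of $\frac1N\Tr\Q_s^{ij}$ around its expectation via the Poincar\'e inequality (Lemma~\ref{lem:Poincare}) depend only on the Gaussianity of $\sX$, on the a.s.\ convergence statements above, and on $\hat\v_1,\dots,\hat\v_s$ being independent of (or only benignly dependent on) $\sX$ — indeed, as noted in the discussion surrounding Figure~\ref{fig:specmeasure} (see Subsection~\ref{sec:estimateA1}), the conclusion already holds for vectors drawn independently of $\sT$. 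The extra signal terms $\sum_{j>s}\beta_j\u_j^{\otimes d}$ that are present in $\sT$ but invisible to $\flat_s(\sT)$ enter only through the contractions $\langle\sT,\hat\v_i^{\otimes(d-2)}\rangle$ with $i\le s$, where they contribute a deterministic perturbation of rank at most $r-s$; under the normalized trace $\frac1N\Tr$ such a bounded finite-rank term is negligible, consistently with the fact (pure-noise corollary) that the limiting traces do not depend on the signal-to-noise ratios at all. This is the step where the orthogonality hypothesis $\langle\hat\v_i,\hat\v_j\rangle=0$ for $1\le i\le s<j\le r$ is used: it guarantees that the non-converging, possibly wild spikes $\hat\v_{s+1},\dots,\hat\v_r$ do not couple, through the Gram structure underlying the critical-point equations, into the first $s$ blocks, so that the self-consistent equation for $\Q_s$ closes in exactly the same form as the one for $\Q$ in the rank-$s$ case.

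The main obstacle will be the bookkeeping in Appendix~\ref{sec:prooftotal}: one must check that each estimate there survives the substitution of the sub-configuration $(\hat\gamma_i,\hat\v_i)_{i\le s}$ of a rank-$r$ critical point for a bona fide rank-$s$ critical point, and that the error terms produced by Stein's lemma — which now also see the additional rank-one pieces of $\sT$ and the discarded spikes — still vanish in the limit, the latter precisely because of orthogonality. I expect this to be routine but notationally heavy, so the proof will largely consist of invoking the corresponding lemmas of Appendix~\ref{sec:prooftotal} with $r$ replaced by $s$ and pointing out the trivial modifications, rather than redoing the computations from scratch.
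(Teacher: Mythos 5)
Your proposal is correct and takes essentially the same route as the paper: in Appendix~\ref{subsec:proofmeasure_orth} the orthogonality in Assumption~\ref{assump:general0} is used to decouple the KKT equations of the first $s$ spikes from the remaining ones, so that $(\hat\gamma_i,\hat\v_i)_{i\le s}$ satisfies a closed rank-$s$ fixed-point system whose reduced Jacobian feeds the implicit-function-theorem/Stein computation of Appendix~\ref{sec:prooftotal} verbatim with $s$ in place of $r$ (the paper writes out the case $s=1$ explicitly). The one point to make explicit in your write-up is this decoupling itself — the sub-tuple must be exhibited as a bona fide rank-$s$ critical point so that $\partial\hat\v_i/\partial X_{\alpha}$, $i\le s$, can be computed from the reduced system and the $\sA_2$-type error terms controlled — which is exactly what your remark about the Gram structure of the critical-point equations amounts to.
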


\begin{proof}
See Appendix~\ref{subsec:proofmeasure_orth}.
\end{proof}

In particular, taking $s = r$ in Theorem~\ref{thm:mostgeneral} gives us the rank-$r$ case, i.e., Theorem~\ref{thm:generalized_measure}. When $s=1$, we recover the limiting spectrum for the rank-one case, which is given by~\cite[Theorem~2]{GCC22}.
\begin{corollary}\label{th:measure_orth}
Assume Assumption~\ref{assump:general0} holds with $|\gamma_1|\geq \frac{2(d-1)}{\sqrt{d(d-1)}}$, the empirical spectral measure of $\langle \sT, \hat\v_1^{\otimes (d-2)}\rangle$ converges weakly almost surely to a deterministic measure whose Stieltjes transform is defined by
\begin{equation}\label{eq:stieltjes1}
g_1(z) = \frac{d(d-1)}{2}\left(-z+\sqrt{z^2-\frac{4}{d(d-1)}}\right).
\end{equation}
\end{corollary}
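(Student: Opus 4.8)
The plan is to derive Corollary~\ref{th:measure_orth} as the special case $s=1$ of Theorem~\ref{thm:mostgeneral}, so that essentially no new analysis is required --- only a careful identification of the scalar quantities involved. First I would observe that for $s=1$ the Gram matrix $\pmb{\hat R}_{vv,1} = [\langle\hat\v_1,\hat\v_1\rangle] = [1]$ holds automatically because $\hat\v_1\in\mathbb{S}^{N-1}$, so the content of Assumption~\ref{assump:general0} reduces to $\hat\gamma_1\xrightarrow{\text{a.s.}}\gamma_1$ (the orthogonality $\langle\hat\v_1,\hat\v_j\rangle=0$ for $j>1$ does not enter $\flat_1(\sT)$ at all). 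From \eqref{eq:Wdef} and \eqref{eq:Wdef2} one gets $\pmb{\hat W}_1 = [1]$ and $\pmb{W}_1 = [1]$, hence by \eqref{eq:defbsTgeneral2} the matrix $\flat_1(\sT)$ is simply the symmetric matrix $\langle\sT,\hat\v_1^{\otimes(d-2)}\rangle$, and therefore $S_{\mu_{\flat_1(\sT)}}(z) = \tfrac1N\Tr\Q_1(z) = \tfrac1N\Tr\Q_1^{11}(z)$ by \eqref{relStieltjesSpec}.

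Next I would compute the $1\times1$ eigendecomposition in \eqref{eq:Pdef2s}: since $\tfrac{2}{\sqrt{d(d-1)}}\pmb{W}_1 = \bigl[\tfrac{2}{\sqrt{d(d-1)}}\bigr]$, we have $\kappa_1^{(1)} = \tfrac{2}{\sqrt{d(d-1)}}$ and $\pmb{P}_1 = [1]$, so $(\kappa_1^{(1)})^2 = \tfrac{4}{d(d-1)}$ and the threshold $|\gamma_s|\geq(d-1)|\kappa_1^{(s)}|$ of Theorem~\ref{thm:mostgeneral} becomes exactly the hypothesis $|\gamma_1|\geq\tfrac{2(d-1)}{\sqrt{d(d-1)}}$. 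Theorem~\ref{thm:mostgeneral} then gives $\tfrac1N\Tr\Q_1^{11}(z)\xrightarrow{\text{a.s.}}g_{11,1}(z)$ with $\pmb{G}_1(z) = \pmb{P}_1\bigl[\zeta(\kappa_1^{(1)})\bigr]\pmb{P}_1^{-1} = \bigl[\zeta(\kappa_1^{(1)})\bigr]$; plugging $\kappa^2 = \tfrac{4}{d(d-1)}$ into the formula for $\zeta$ in \eqref{eq:measurefpeqmatf_noise} yields
\[
\zeta(\kappa_1^{(1)}) = \frac{2}{4/(d(d-1))}\Bigl(-z+\sqrt{z^2-\tfrac{4}{d(d-1)}}\Bigr) = \frac{d(d-1)}{2}\Bigl(-z+\sqrt{z^2-\tfrac{4}{d(d-1)}}\Bigr) = g_1(z),
\]
which is the claimed Stieltjes transform.

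Finally I would upgrade this convergence of Stieltjes transforms to weak almost-sure convergence of the measures using the equivalence criterion recalled in Section~\ref{sec:pre}: choose a countable set $D\subseteq\mathbb{C}_+$ with an accumulation point in $\mathbb{C}_+$, intersect over $z\in D$ the almost-sure events provided by Theorem~\ref{thm:mostgeneral} to obtain a single full-measure event on which $S_{\mu_{\flat_1(\sT)}}(z)\to g_1(z)$ for all $z\in D$, and conclude (since $g_1$ is the Stieltjes transform of a fixed probability measure) that $\mu_{\flat_1(\sT)}$ converges weakly to that measure almost surely. It remains only to identify the limiting measure explicitly: one checks that $g_1$ coincides with the Stieltjes transform of $\mu_{\kappa_1^{(1)}}$ from Theorem~\ref{th:explicitmu_noise}, i.e. the semicircle-type law $\tfrac{d(d-1)}{2\pi}\sqrt{(\tfrac{4}{d(d-1)}-x^2)_+}\,dx$ supported on $[-\tfrac{2}{\sqrt{d(d-1)}},\tfrac{2}{\sqrt{d(d-1)}}]$, recovering \cite[Theorem~2]{GCC22}. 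There is no genuine difficulty in this argument; the only points deserving attention are the reduction of Assumption~\ref{assump:general0} to the single hypothesis $\hat\gamma_1\xrightarrow{\text{a.s.}}\gamma_1$ and the bookkeeping that matches the scalar $\kappa_1^{(1)}$ with the constant appearing in both the threshold and $g_1$.
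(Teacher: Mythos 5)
Your proposal is correct and follows exactly the paper's route: the paper proves this corollary as a direct application of Theorem~\ref{thm:mostgeneral} with $s=1$, which is precisely your reduction $\pmb{W}_1=[1]$, $\kappa_1^{(1)}=\tfrac{2}{\sqrt{d(d-1)}}$, $\zeta(\kappa_1^{(1)})=g_1(z)$. Your added bookkeeping (the threshold identification and the Stieltjes-to-weak-convergence step) simply makes explicit what the paper leaves implicit, so there is nothing to correct.
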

\begin{proof}
It is a direct application of Theorem~\ref{thm:mostgeneral}.
\end{proof}

Even though the orthogonality property of $\hat\v_i$ and  $\hat\v_j$ for $1\leq i\leq s < j\leq r$ in Assumption~\ref{assump:general0} may not be satisfied in practice, we conjecture that Theorem~\ref{thm:mostgeneral} remains true when only assuming that $\langle \hat\v_i,\hat\v_j\rangle$ converges to $0$ when $N$ goes to infinity.

\begin{remark}\label{rem:spectrum}
Let us define
\[
\pmb{U}_i = \begin{bmatrix}
\u_i & 0 & 0 \\
0 & \ddots & 0\\
0 & 0 & \u_i
\end{bmatrix} \text{\quad and \quad} \A_i = \beta_i \pmb{\hat W}_r \begin{bmatrix}
\langle \u_i, \hat\v_1 \rangle^{d-2} & 0 & 0 \\
0 & \ddots & 0\\
0 & 0 & \langle \u_i, \hat\v_r\rangle^{d-2}
\end{bmatrix}.
\]
Then
\begin{equation}\label{eq:rk2deformation}
\flat(\sT) = \flat(\sum_{j=1}^r \beta_j \u_j^{\otimes d}) + \frac{1}{\sqrt{N}}\flat(\sX) = \sum_{i=1}^r \pmb{U}_i \A_i \pmb{U}_i^{\top} + \frac{1}{\sqrt{N}}\flat(\sX).
\end{equation}
Although we have seen that the empirical spectral measures of $\flat(\sT)$ and $\frac{1}{\sqrt{N}}\flat(\sX)$ converge to the same limit, thanks to the rank-$r^2$ matrix $\pmb{U}_i \A_i \pmb{U}_i^{\top}$, the largest eigenvalues of $\flat(\sT)$ and $\frac{1}{\sqrt{N}}\flat(\sX)$ may be different, which can help us distinguish the two matrices, hence providing another criterion to detect the low-rank signal.
\end{remark}


\section{Limiting alignments}\label{sec:limitingalignments}
\subsection{Asymptotic results}

In this section, we study the limiting alignments for Model~\eqref{eq:tensormodel} by using the results on the spectrum of $\flat(\sT)$ obtained in Section~\ref{sec:spectrum}. 
In order to state our results, we will assume the convergence of the following correlation matrices related to the true signals
\begin{equation}
\pmb{\hat R}_{uv,s} = \begin{bmatrix}\langle \u_1,\hat\v_1\rangle & \dots & \langle \u_1,\hat\v_s\rangle \\ \vdots & \ddots & \vdots\\ \langle \u_r,\hat\v_1\rangle & \dots & \langle \u_r,\hat\v_s\rangle\end{bmatrix}
\text{\quad and \quad}
\pmb{\hat R}_{uu,r} = \begin{bmatrix}1 & \dots & \langle \u_1,\u_r\rangle \\ \vdots & \ddots & \vdots\\ \langle \u_r,\u_1\rangle & \dots & 1\end{bmatrix}.
\end{equation}
\begin{assumption}\label{assump:general2}
The sequence $(\hat\gamma_1, \dots,\hat\gamma_s, \hat\v_1, \dots,\hat\v_s)$ satisfies 
\[
 \pmb{\hat R}_{uv,s}\xrightarrow{\text{a.s.}} \pmb{R}_{uv,s}\in\mathbb{R}^{r\times s}, \quad \pmb{\hat R}_{uu,r} \xrightarrow{\text{a.s.}} \pmb{R}_{uu,r}\in\mathbb{R}^{r\times r}.
\]
\end{assumption}

Denote furthermore
\[
\pmb{D}_{\beta} = \begin{bmatrix}
    \beta_1  & 0 & 0 \\
    0 & \ddots  & 0  \\
   0 & 0 & \beta_r \\
\end{bmatrix}
\text{\ and \ }
\pmb{D}_{\gamma} = \begin{bmatrix}
    \gamma_1  & 0 & 0 \\
    0 & \ddots  & 0  \\
   0 & 0 & \gamma_s \\
\end{bmatrix}.
\]
Then we have the following result governing the asymptotic relationships between the critical points and the true signals, more precisely, between the $s$ estimated signals whose summary statistics are assumed to \textit{converge} and the $r$ true signals. 

\begin{theorem}\label{thm:generalized_align}
Under Assumption~\ref{assump:general0}~and~\ref{assump:general2}, when
\begin{equation}\label{eq:bound_gamma}
    |\gamma_s| > (d-1) \, \lvert\kappa_1^{(s)}\rvert,
\end{equation}
  the limiting alignments satisfy the following equations
\begin{equation}\label{eq:bigsystems}
\begin{dcases}
\pmb{R}_{uu,r}\pmb{D}_{\beta} \pmb{R}_{uv,s}^{\odot (d-1)} &= \pmb{R}_{uv,s} \pmb{M} \\
\pmb{R}_{uv,s}^{\top}\pmb{D}_{\beta} \pmb{R}_{uv,s}^{\odot (d-1)}  &= \pmb{R}_{vv,s}\pmb{M} + \frac{1}{d(d-1)}\left(\pmb{R}_{vv,s}^{\odot (d-1)}\pmb{G}_s\left(\frac{\gamma_s}{d-1}\right)\pmb{W}_s\right)^{\top}
\end{dcases},
\end{equation}
where
\begin{equation}
    \pmb{M} =  \pmb{D}_{\gamma}\pmb{R}_{vv,s}^{\odot (d-1)}+  \frac{1}{d}\pmb{R}_{vv,s}^{\odot (d-2)}\odot\left(\pmb{G}_s\left(\frac{\gamma_s}{d-1}\right)\pmb{W}_s\right).
\end{equation}
\end{theorem}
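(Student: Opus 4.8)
The plan is to derive the two matrix equations in~\eqref{eq:bigsystems} by contracting the eigenvector identity~\eqref{eq:kktrkrmatrix} against the true signals $\u_i$ and the estimated signals $\hat\v_j$, and then taking $N\to\infty$ using the resolvent asymptotics of Theorem~\ref{thm:mostgeneral}. Concretely, starting from $\flat_s(\sT)\,[\hat\gamma_1\hat\v_1;\dots;\hat\gamma_s\hat\v_s]^{\top}=\hat\gamma_s\,[\hat\gamma_1\hat\v_1;\dots;\hat\gamma_s\hat\v_s]^{\top}$ together with the decomposition~\eqref{eq:rk2deformation}, i.e. $\flat_s(\sT)=\sum_{i=1}^r\pmb U_i\A_i\pmb U_i^{\top}+\frac{1}{\sqrt N}\flat(\sX)$ restricted to the first $s$ blocks, I would isolate the signal part and write, for each block $k\in[s]$,
\[
\sum_{i=1}^r \beta_i\,\langle\u_i,\hat\v_k\rangle^{d-2}\, (\pmb{\hat W}_s)_{k\,\cdot}\!\begin{bmatrix}\langle\u_i,\hat\v_1\rangle\hat\gamma_1\hat\v_1\\ \vdots\end{bmatrix}\cdot\text{(suitable contraction)} \;+\; \frac{1}{\sqrt N}\bigl(\flat(\sX)[\hat\gamma_\bullet\hat\v_\bullet]\bigr)_k \;=\; \hat\gamma_s\,\hat\gamma_k\hat\v_k,
\]
and then take inner products of this vector identity with $\u_j$ (giving the first equation) and with $\hat\v_j$ (giving the second). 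The purely algebraic terms produce $\pmb{R}_{uu,r}\pmb{D}_\beta\pmb{R}_{uv,s}^{\odot(d-1)}$, $\pmb{R}_{uv,s}\pmb{D}_\gamma\pmb{R}_{vv,s}^{\odot(d-1)}$, etc., after using Assumptions~\ref{assump:general0} and~\ref{assump:general2} and $\hat\gamma_k\to\gamma_k$; the Hadamard powers appear because a factor $\langle\u_i,\hat\v_k\rangle^{d-2}$ from $\A_i$ combines with one more factor $\langle\u_i,\hat\v_k\rangle$ (resp. $\langle\hat\v_j,\hat\v_k\rangle$) coming from the contraction index.

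The genuinely nontrivial part is the noise term $\frac{1}{\sqrt N}\langle\sX,\hat\v_k^{\otimes(d-2)}\rangle$ contracted further against $\u_j$ or $\hat\v_j$. The key idea is that this is exactly a bilinear form in the random matrix blocks whose resolvent is controlled by Theorem~\ref{thm:mostgeneral}: one rewrites the eigenvector relation as $(\flat_s(\sT)-z\pId)^{-1}$ evaluated near the eigenvalue $z=\gamma_s$, or, more carefully, one uses that the relevant quantity $\frac1N\Tr\Q_s^{ij}(z)\to g_{ij,s}(z)$ together with a concentration argument (Poincaré inequality, Lemma~\ref{lem:Poincare}) to pass from the trace to the bilinear forms $\u_j^{\top}\Q_s^{ij}(z)\,\hat\gamma_i\hat\v_i$ and $\hat\v_j^{\top}\Q_s^{ij}(z)\,\hat\gamma_i\hat\v_i$. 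Because $[\hat\gamma_1\hat\v_1;\dots]$ is itself the eigenvector at eigenvalue $\gamma_s$, these bilinear forms reduce (after a standard resolvent identity manipulation, separating the rank-one spike directions from the noise) to expressions in $\pmb{G}_s(\gamma_s/(d-1))$ and $\pmb W_s$; the shift by the factor $d-1$ and the normalization $\frac{1}{d(d-1)}$ and $\frac1d$ come from the Stein's-lemma computation of $\E[\sX_{\cdots}\,\partial(\cdot)]$ applied to $\langle\sX,\hat\v^{\otimes(d-2)}\rangle$, which produces $d-1$ equivalent ways of removing a copy of $\hat\v$ and a variance weight~\eqref{eq:var}. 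I would first do this at a generic $z\in\C_+$, obtain the matrix identity with $\pmb G_s(z)$, and then justify taking $z\to\gamma_s/(d-1)$ using the edge condition~\eqref{eq:bound_gamma}, which guarantees $\gamma_s/(d-1)$ lies strictly outside the support $[-|\kappa_1^{(s)}|,|\kappa_1^{(s)}|]$ of the limiting spectral measure so that $g_{ij,s}$ extends analytically there.

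The main obstacle I anticipate is the self-referential nature of the noise term: the vector $[\hat\gamma_i\hat\v_i]$ that we contract against $\flat(\sX)$ is not independent of $\sX$, so a naive Stein integration by parts generates derivative terms $\partial\hat\v_i/\partial\sX$ that are not obviously negligible. The standard remedy — and the one I would pursue — is to introduce the resolvent $\Q_s(z)$ as in Section~\ref{sec:full}, express the quantities of interest as $z\to\gamma_s/(d-1)$ limits of $\u_j^{\top}\Q_s^{ij}(z)(\cdot)$ and $\hat\v_j^{\top}\Q_s^{ij}(z)(\cdot)$, and exploit that in the resolvent formalism the troublesome derivatives produce lower-order $O(N^{-1/2})$ contributions controlled by the boundedness of $\Q_s$ away from the spectrum (property~(2) of the resolvent) and by the a.s. convergence of the summary statistics. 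This is precisely the mechanism already used in the proof of Theorem~\ref{thm:mostgeneral}, so I expect the argument to reuse the estimates from Appendix~\ref{subsec:proofmeasure_orth} (in particular the analogue of Subsection~\ref{sec:estimateA1}) almost verbatim, with the only new work being the bookkeeping that turns the scalar trace convergence into the stated matrix equations.
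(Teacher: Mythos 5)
Your starting point agrees with the paper: contract the KKT relations \eqref{eq:rkrapprox} against $\u_j$ and against $\hat\v_j$, split $\sT$ into signal plus $\frac{1}{\sqrt N}\sX$, and evaluate the Gaussian term $\frac{1}{\sqrt N}\langle\sX,\hat\v_k^{\otimes(d-1)}\u_j\rangle$ (resp.\ $\langle\sX,\hat\v_k^{\otimes(d-1)}\hat\v_j\rangle$) by Stein's lemma. The gap is in how you treat that Gaussian term. You assert that the derivative terms $\partial\hat\v_i/\partial X_{\alpha}$ produced by the integration by parts are lower-order $O(N^{-1/2})$ and that the $\pmb{G}_s$ correction should instead be recovered from resolvent bilinear forms. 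This is backwards: since $\langle\sX,\hat\v_k^{\otimes(d-1)}\u_j\rangle$ contains no resolvent entry, the \emph{only} Stein contribution is through $\partial\hat\v_i/\partial X_{\alpha}$, and it is not negligible — it carries the entire correction. In the paper's proof (Appendix~\ref{sec:generalized_align}) this derivative is computed exactly via the implicit function theorem applied to the critical-point system: the relevant Jacobian block is $(d-1)\langle\sT,\hat\v^{\otimes(d-2)}\rangle-\hat\gamma\,\pId$ up to a finite-rank perturbation (see \eqref{eq:defC_gal}), so its inverse is $\frac{1}{d-1}\bigl(\Q(\tfrac{\hat\gamma_s}{d-1})\pmb{\hat W}\bigr)$ plus finite rank, and the normalized traces of its blocks converge to $\frac{1}{d-1}\bigl(\pmb{G}_s(\tfrac{\gamma_s}{d-1})\pmb{W}_s\bigr)_{ij}$. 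That is where the argument $\gamma_s/(d-1)$ and the prefactors $\frac1d$ and $\frac{1}{d(d-1)}$ come from — the factor $d-1$ is a shift induced by differentiating $\hat\v^{\otimes(d-1)}$ in the fixed-point map, not a consequence of evaluating the resolvent near the eigenvalue $\gamma_s$ or of the Stein combinatorics you invoke.

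The substitute route you propose also does not close. Theorem~\ref{thm:mostgeneral} controls only normalized traces $\frac1N\Tr\Q_s^{ij}(z)$; it says nothing about bilinear forms $\u_j^{\top}\Q_s^{ij}(z)\,\hat\v_i$, and the Poincar\'e inequality only concentrates such quantities around their means without identifying those means in terms of $g_{ij,s}$ — that would require an anisotropic (deterministic-equivalent) statement the paper never proves. Moreover, contracting $\Q_s(z)$ against the eigenvector $[\hat\gamma_1\hat\v_1;\dots;\hat\gamma_s\hat\v_s]$ near its own outlier eigenvalue is singular, so ``separating the spike direction from the noise'' is precisely the step your sketch leaves undone. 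The fix is to abandon the bilinear-form detour: after the implicit-function-theorem computation, the surviving limits are normalized traces weighted by the summary statistics (terms like $\frac{1}{N}\Tr(\pmb{D}_{1t})\cdot\langle\u_1,\hat\v_t\rangle\langle\hat\v_1,\hat\v_t\rangle^{d-2}$), for which the trace convergence of Theorem~\ref{thm:mostgeneral} is exactly sufficient, together with bookkeeping of the multinomial variance weights \eqref{eq:var} to show the remaining Stein terms vanish.
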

\begin{proof}
See Appendix~\ref{sec:generalized_align}.
\end{proof}

In particular, taking $s = r$ in Theorem~\ref{thm:generalized_align} gives us the limiting alignments for the rank-$r$ case. As another direct corollary, we apply Theorem~\ref{thm:generalized_align} to the case $s=1,r=2$.
\begin{corollary}\label{thm:orthogonal}
Under Assumption~\ref{assump:general0} and Assumption~\ref{assump:general2}, when
\begin{equation}\label{eq:bound_gamma1}
    \gamma_1 > \frac{2(d-1)}{\sqrt{d(d-1)}},
\end{equation}
denoting by 
\[
\pmb{R}_{uv,1}=\begin{bmatrix}\alpha_{11}\\ \alpha_{21}\end{bmatrix} \text{\quad  and \quad} \pmb{R}_{uu,2} = \begin{bmatrix}1 & \rho\\ \rho & 1\end{bmatrix},
\]
then $\gamma_1, \alpha_{11}, \alpha_{21}$ satisfy the following equations
\begin{equation}\label{eq:bigsystem_orth}
\begin{dcases}
    \beta_1 \alpha_{11}^{d-1} + \beta_2 \alpha_{21}^{d-1}\rho &= \bigg(\gamma_1 + \frac{1}{d} g_1(\frac{\gamma_1}{d-1}) \bigg) \alpha_{11}\\
    \beta_1 \rho\alpha_{11}^{d-1} + \beta_2 \alpha_{21}^{d-1} &= \bigg(\gamma_1 + \frac{1}{d}g_1(\frac{\gamma_1}{d-1}) \bigg) \alpha_{21}\\
    \beta_1\alpha_{11}^d + \beta_2\alpha_{21}^d &= \gamma_1 + \frac{1}{d-1} g_1(\frac{\gamma_1}{d-1}) \\
\end{dcases},
\end{equation}
where $g_1$ given in \eqref{eq:stieltjes1}.
\end{corollary}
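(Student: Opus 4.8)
The plan is to obtain Corollary~\ref{thm:orthogonal} as a direct specialization of Theorem~\ref{thm:generalized_align} to the case $s=1$, $r=2$, so the task reduces to unwinding the matrix equations in~\eqref{eq:bigsystems} into the scalar form~\eqref{eq:bigsystem_orth}. First I would record the data of the specialization: with $s=1$ the matrices $\pmb{R}_{vv,s}$ and $\pmb{W}_s$ collapse to scalars. Since $\hat\v_1$ is a unit vector, $\pmb{R}_{vv,1} = 1$, hence $\pmb{R}_{vv,1}^{\odot(d-1)} = \pmb{R}_{vv,1}^{\odot(d-2)} = 1$ and $\pmb{W}_1 = 1$ (the diagonal weight matrix in~\eqref{eq:Wdef2} is just the scalar $1$ when $s=1$). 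Consequently the eigendecomposition~\eqref{eq:Pdef2s} gives $\kappa_1^{(1)} = \frac{2}{\sqrt{d(d-1)}}$, so the bound $|\gamma_1| > (d-1)|\kappa_1^{(1)}|$ in~\eqref{eq:bound_gamma} becomes exactly~\eqref{eq:bound_gamma1}, and $\pmb{G}_1(z) = g_1(z)$ with $g_1$ the Stieltjes transform from Corollary~\ref{th:measure_orth}, i.e.~\eqref{eq:stieltjes1}. I would also note $\pmb{D}_\gamma = \gamma_1$, $\pmb{D}_\beta = \mathrm{diag}(\beta_1,\beta_2)$, and with the stated notation $\pmb{R}_{uv,1} = \begin{bmatrix}\alpha_{11}\\ \alpha_{21}\end{bmatrix}$, $\pmb{R}_{uu,2} = \begin{bmatrix}1 & \rho\\ \rho & 1\end{bmatrix}$, so that $\pmb{R}_{uv,1}^{\odot(d-1)} = \begin{bmatrix}\alpha_{11}^{d-1}\\ \alpha_{21}^{d-1}\end{bmatrix}$.

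Next I would substitute these into the scalar $\pmb{M}$: since $\pmb{R}_{vv,1}^{\odot(d-1)} = \pmb{R}_{vv,1}^{\odot(d-2)} = \pmb{W}_1 = 1$, we get $\pmb{M} = \gamma_1 + \frac{1}{d} g_1\!\left(\frac{\gamma_1}{d-1}\right)$, a scalar. The first equation of~\eqref{eq:bigsystems} then reads
\[
\begin{bmatrix}1 & \rho\\ \rho & 1\end{bmatrix}\begin{bmatrix}\beta_1 & 0\\ 0 & \beta_2\end{bmatrix}\begin{bmatrix}\alpha_{11}^{d-1}\\ \alpha_{21}^{d-1}\end{bmatrix} = \begin{bmatrix}\alpha_{11}\\ \alpha_{21}\end{bmatrix}\left(\gamma_1 + \tfrac{1}{d} g_1(\tfrac{\gamma_1}{d-1})\right),
\]
whose two rows are precisely the first two lines of~\eqref{eq:bigsystem_orth}. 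For the third line I would evaluate the second equation of~\eqref{eq:bigsystems}, which in the $s=1$ case is the scalar identity
\[
\pmb{R}_{uv,1}^\top \pmb{D}_\beta \pmb{R}_{uv,1}^{\odot(d-1)} = \pmb{R}_{vv,1}\pmb{M} + \frac{1}{d(d-1)}\left(\pmb{R}_{vv,1}^{\odot(d-1)} g_1(\tfrac{\gamma_1}{d-1})\pmb{W}_1\right)^\top;
\]
the left side is $\beta_1\alpha_{11}^d + \beta_2\alpha_{21}^d$, and the right side is $\gamma_1 + \frac{1}{d}g_1(\tfrac{\gamma_1}{d-1}) + \frac{1}{d(d-1)}g_1(\tfrac{\gamma_1}{d-1}) = \gamma_1 + \frac{1}{d-1}g_1(\tfrac{\gamma_1}{d-1})$, using $\frac{1}{d}+\frac{1}{d(d-1)} = \frac{1}{d-1}$. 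This matches the third line of~\eqref{eq:bigsystem_orth}.

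Since every ingredient is already supplied by Theorem~\ref{thm:generalized_align} and Corollary~\ref{th:measure_orth}, there is no substantive obstacle here; the only points requiring care are bookkeeping ones, namely verifying that all the Hadamard powers of the $1\times 1$ Gram matrix are $1$ (which uses $\|\hat\v_1\|=1$) and correctly transposing/indexing the $r\times 1$ versus $1\times 1$ blocks when reading off the two scalar rows of the first equation. I would therefore present the proof as a short computation: state the specialization of the parameters, plug into~\eqref{eq:bigsystems}, expand the two matrix equations row by row, and simplify the coefficient $\frac{1}{d}+\frac{1}{d(d-1)}$ to $\frac{1}{d-1}$ to land exactly on~\eqref{eq:bigsystem_orth}.
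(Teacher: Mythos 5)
Your proposal is correct and follows exactly the paper's route: the paper proves this corollary as a direct specialization of Theorem~\ref{thm:generalized_align} to $s=1$, $r=2$, and your computation (scalar $\pmb{M}$, $\kappa_1^{(1)}=\tfrac{2}{\sqrt{d(d-1)}}$ giving the bound~\eqref{eq:bound_gamma1}, row-by-row expansion, and the simplification $\tfrac{1}{d}+\tfrac{1}{d(d-1)}=\tfrac{1}{d-1}$) is precisely the bookkeeping the paper leaves implicit. No gaps.
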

\begin{proof}
This is a direct application of Theorem~\ref{thm:generalized_align}.
\end{proof}

Note that the lower bound given by~\eqref{eq:bound_gamma1} coincides with the bound for $\gamma_1$ in the rank-one case. See~\cite[Theorem~3]{GCC22}. Moreover, we remark that the limiting alignments in this case correspond to the limiting alignments obtained by the first step of the tensor deflation method presented in \cite[Corollary~4.3]{SGG23}.


As another explicit example, let us take $s = r = 2$, i.e., the rank-$2$ case. Denoting by 
\[
\nu = \frac{\gamma_1}{\gamma_1}, \quad\pmb{R}_{vv,2} = \begin{bmatrix}1 & \tau\\ \tau & 1\end{bmatrix}, \quad \lambda = \tau^{d-1}, \quad
\pmb{R}_{uv,2}=\begin{bmatrix}\alpha_{11} & \alpha_{12}\\ \alpha_{21} & \alpha_{22}\end{bmatrix}, \quad \text{and} \quad \pmb{R}_{uu,2} = \begin{bmatrix}1 & \rho\\ \rho & 1\end{bmatrix},
\]
Equation~\ref{eq:bigsystems} has the form
\begin{equation}\label{eq:bigsystem2}
\begin{dcases}
\begin{bmatrix} 1 & \rho \\ \rho & 1\end{bmatrix}\begin{bmatrix} \beta_1 & 0 \\ 0 & \beta_2\end{bmatrix} \begin{bmatrix} \alpha_{11}^{d-1} & \alpha_{12}^{d-1} \\ \alpha_{21}^{d-1} & \alpha_{22}^{d-1} \end{bmatrix} &= \begin{bmatrix} \alpha_{11} & \alpha_{12} \\ \alpha_{21} & \alpha_{22} \end{bmatrix}\pmb{M} \\
\begin{bmatrix} \alpha_{11} & \alpha_{12} \\ \alpha_{21} & \alpha_{22} \end{bmatrix}^{\top}\begin{bmatrix} \beta_1 & 0 \\ 0 & \beta_2\end{bmatrix}\begin{bmatrix} \alpha_{11}^{d-1} & \alpha_{12}^{d-1} \\ \alpha_{21}^{d-1} & \alpha_{22}^{d-1} \end{bmatrix} &= \begin{bmatrix} 1 & \tau \\ \tau & 1 \end{bmatrix}\pmb{M} + \frac{1}{d(d-1)}\pmb{G}\left(\frac{\gamma_2}{d-1}\right)\begin{bmatrix} \nu & 0 \\ 0 & 1 \end{bmatrix} 
\end{dcases},
\end{equation}
with $\pmb{G}(z) = \begin{bmatrix} g_{11}(z) & g_{12}(z) \\ g_{21}(z) & g_{22}(z) \end{bmatrix}$ with $g_{ij}(z)$ defined in Corollary~\ref{thm:pure_noise_Stieltjes}, and
\begin{multline}
    \pmb{M} =  \begin{bmatrix} \gamma_1 & \gamma_1\lambda \\ \gamma_2\lambda & \gamma_2 \end{bmatrix} \\
    + \frac{1}{d(1-\lambda^2)}\begin{bmatrix} \nu g_{11}\left(\frac{\gamma_2}{d-1}\right)-\lambda g_{12}\left(\frac{\gamma_2}{d-1}\right) & \tau^{d-2}\bigg(\nu g_{21}\left(\frac{\gamma_2}{d-1}\right)-\lambda g_{22}\left(\frac{\gamma_2}{d-1}\right)\bigg) \\ \tau^{d-2}\bigg(g_{12}\left(\frac{\gamma_2}{d-1}\right)-\lambda \nu g_{11}\left(\frac{\gamma_2}{d-1}\right)\bigg) & g_{22}\left(\frac{\gamma_2}{d-1}\right) - \lambda\nu g_{21}\left(\frac{\gamma_2}{d-1}\right)\end{bmatrix}.
\end{multline}

\subsection{Numerical experiments}

In order to compute critical points, we use the following iterative algorithm.
Given an observed $\sT$, we initialize randomly the point $(\hat\gamma_1^{(0)}\hat\v_1^{(0)},\dots,\hat\gamma_r^{(0)}\hat\v_r^{(0)})$ and iteratively update the vector using \eqref{eq:kktrkrmatrix} by first updating $\flat(\sT)$ and then performing
\begin{equation}
\begin{bmatrix} \hat\gamma_1^{(t+1)}\hat\v_1^{(t+1)}\\ \vdots \\ \hat\gamma_r^{(t+1)}\hat\v_r^{(t+1)}\end{bmatrix}  = \frac{1}{\hat\gamma_r^{(t)}}b(\sT)\begin{bmatrix} \hat\gamma_1^{(t)}\hat\v_1^{(t)}\\ \vdots \\ \hat\gamma_r^{(t)}\hat\v_r^{(t)}\end{bmatrix}
\end{equation}
until convergence, which will provide us empirical values $\pmb{\hat R}_{uv,r}, \pmb{\hat R}_{vv,r}, \hat{\gamma}_1, \dots, \hat{\gamma}_r$.

Secondly, in order to compare with our theoretical asymptotic values $\pmb{R}_{uv, r}, \pmb{R}_{vv, r}, \gamma_1, \dots, \gamma_r$, we numerically solve \eqref{eq:bigsystems} to find the asymptotic parameters in particular for the following two classes of sequences of critical points.
\begin{itemize}
\item First, we consider Assumption~\ref{assump:general0} with $r=s=2$ and solve \eqref{eq:bigsystems} with these parameters. We call this solution the \textbf{2-signal solution}.
\item Second, we consider Assumption~\ref{assump:general0} with $s=1,r=2$ and solve \eqref{eq:bigsystems} with these parameters. We call this solution the \textbf{1-signal solution}. Note that in this case, we only obtain the limits $\gamma_1$ and $\pmb{R}_{uv,1}$.
\end{itemize}
We simulate a rank-two model with two true signals satisfying $\beta_1=\beta_2$ and a correlation
\[
\pmb{R}_{uu,2} = \begin{bmatrix}1 & \rho\\ \rho & 1\end {bmatrix}.
\]

In Figure~\ref{fig:rho0nu1} we simulate the orthogonal case $\rho=0$. In this special case,  we observe that there exist three different 1-signal solutions corresponding to one of the three following asymptotic cases:
\begin{itemize}
    \item the estimated signal $\hat\v_1$ is strongly correlated with $\u_1$ and independent from $\u_2$,
    \item the estimated signal $\hat\v_1$ is a mixture of the two signals $\u_1$ and  $\u_2$,
    \item the estimated signal $\hat\v_1$ is strongly correlated with $\u_2$ and independent from $\u_1$.
\end{itemize}
We observe that the alignments of the 2-signal solution and the first 1-signal solution coincide when $\beta_2$ is sufficiently large, even though the $1$-signal solution exists for smaller $\beta_2$. This makes sense because Assumption~\ref{assump:general0} is weaker when $s=1$. 
We also observe in Figure~\ref{fig:rho0nu1} the coincidence between the empirical quantities and their asymptotic theoretical computations.

In Figure~\ref{fig:rho0nu07}, we consider the correlated case $\rho=0.7$. In this case, the two classes of alignments show different behaviors. We see in this case that some empirical critical points satisfy the first class while the others belong to the second class of alignments. 

\begin{figure}[h!]
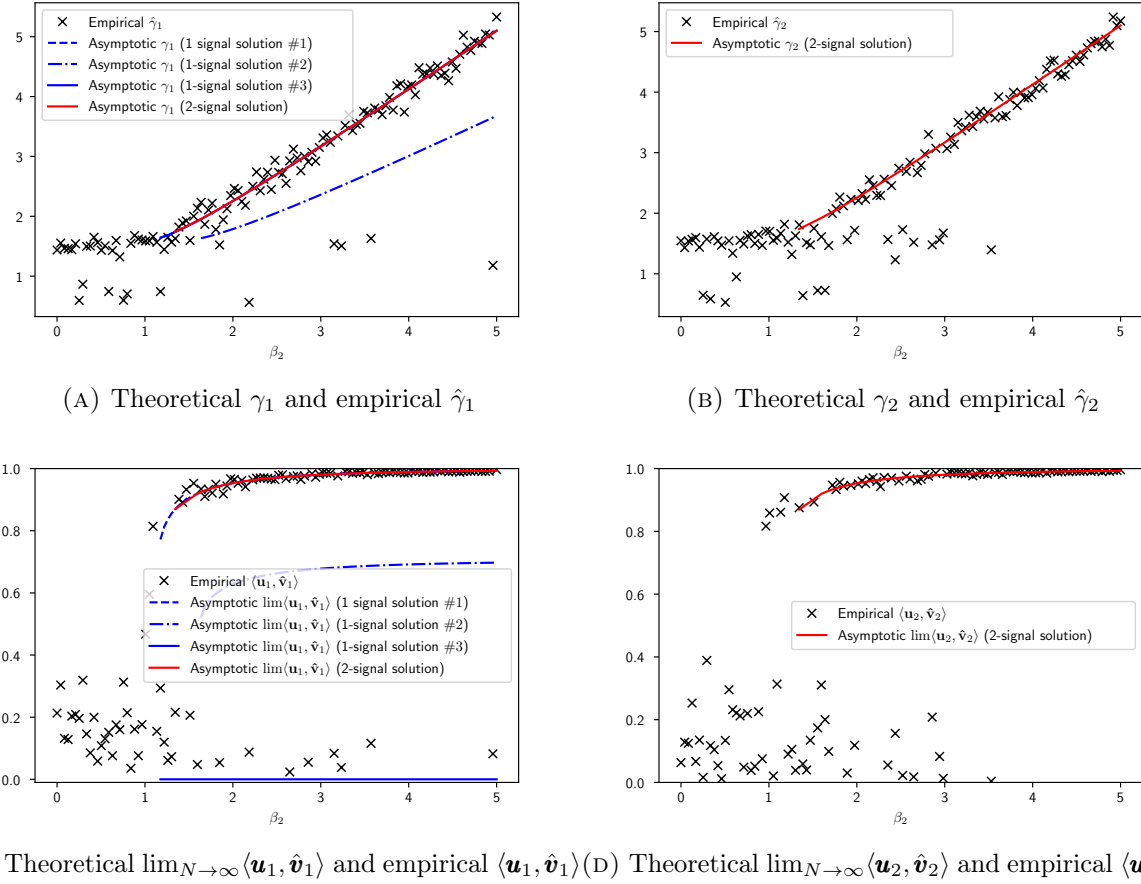

\centering
\begin{subfigure}{.5\textwidth}
  \centering
\resizebox{\columnwidth}{!}{\input{images/gamma1_rho0_ratio1.pgf}}
\caption{Theoretical $\gamma_{1}$ and empirical $\hat\gamma_{1}$}
\end{subfigure}%
\begin{subfigure}{.5\textwidth}
  \centering
\resizebox{\columnwidth}{!}{\input{images/gamma2_rho0_ratio1.pgf}}
\caption{Theoretical $\gamma_{2}$ and empirical $\hat\gamma_{2}$}
\end{subfigure}
\begin{subfigure}{.5\textwidth}
  \centering
\resizebox{\columnwidth}{!}{\input{images/alpha11_rho0_ratio1.pgf}}
\caption{Theoretical $\lim_{N\rightarrow\infty}\langle \u_1,\hat\v_1\rangle$ and empirical $\langle \u_1,\hat\v_1\rangle$}
\end{subfigure}%
\begin{subfigure}{.5\textwidth}
  \centering
\resizebox{\columnwidth}{!}{\input{images/alpha22_rho0_ratio1.pgf}}
\caption{Theoretical $\lim_{N\rightarrow\infty}\langle \u_2,\hat\v_2\rangle$ and empirical $\langle \u_2,\hat\v_2\rangle$}
\end{subfigure}
\caption{Solutions with $\rho=0$}\label{fig:rho0nu1}
\end{figure}

\begin{figure}[h!]
\centering
\begin{subfigure}{.5\textwidth}
  \centering
\resizebox{\columnwidth}{!}{\input{images/gamma1_rho0.7_ratio1.pgf}}
\caption{Theoretical $\gamma_{1}$ and empirical $\hat\gamma_{1}$}
\end{subfigure}%
\begin{subfigure}{.5\textwidth}
  \centering
\resizebox{\columnwidth}{!}{\input{images/gamma2_rho0.7_ratio1.pgf}}
\caption{Theoretical $\gamma_{2}$ and empirical $\hat\gamma_{2}$}
\end{subfigure}
\begin{subfigure}{.5\textwidth}
  \centering
\resizebox{\columnwidth}{!}{\input{images/alpha11_rho0.7_ratio1.pgf}}
\caption{Theoretical and empirical $\langle \u_1,\hat\v_1\rangle$}
\end{subfigure}%
\begin{subfigure}{.5\textwidth}
  \centering
\resizebox{\columnwidth}{!}{\input{images/alpha22_rho0.7_ratio1.pgf}}
\caption{Theoretical and empirical $\langle \u_2,\hat\v_2\rangle$}
\end{subfigure}
\caption{Solutions with $\rho=0.7$}\label{fig:rho0nu07}
\end{figure}

Remark that, in general, it is possible that there may be multiple algebraic solutions to equations~\eqref{eq:bigsystems}. All of them  may correspond to limits of critical points of Equation~\eqref{eq:rkrapprox}. 
We can pick the solution corresponding to the minimizer of~\eqref{lowrkopt} among the class of critical points corresponding to $s=r$ obtained from~\eqref{eq:bigsystems}. For this aim, in addition to the semialgebraic conditions imposed by the Stieltjes transform, we also need the solution $\pmb{R}_{uv, r}, \pmb{R}_{vv, r}, \gamma_1, \dots, \gamma_r$ minimizing the asymptotic squared error since we can express this limit squared error in terms of the signal summary statistics when $s=r$.

Indeed, for a sequence of critical points $(\hat\gamma_1, \dots,\hat\gamma_r, \hat\v_1, \dots,\hat\v_r)$ (i.e., satisfying~\eqref{eq:rkrapprox}), we define the normalized squared error by
\[
\begin{split}
\mathcal{H}(\hat\gamma_1, \dots,\hat\gamma_r, \hat\v_1, \dots,\hat\v_r) &\coloneqq \bigg\lVert \sT - \sum_{i=1}^r \hat\gamma_i \hat\v_i^{\otimes d} \bigg\rVert - \frac{1}{N} \lVert \sX \rVert^2.
\end{split}
\]
Note that we need to define a normalized version of the squared error since the term $\frac{1}{N} \lVert \sX \rVert^2$ will dominate the error for large $N$. Then, the following result holds.
\begin{lemma}\label{lemma:likelihood}
Under Assumption~\ref{assump:general}~and~\ref{assump:general2}, we have
\[
\lim_{N\to\infty} \mathcal{H}(\hat\gamma_1, \dots,\hat\gamma_r, \hat\v_1, \dots,\hat\v_r) =  \begin{bmatrix}\beta_1\\ \vdots\\ \beta_r\end{bmatrix}^{\top}\pmb{R}_{uu,r}^{\odot d}\begin{bmatrix}\beta_1\\ \vdots\\ \beta_r\end{bmatrix} -  \begin{bmatrix}\gamma_1\\ \vdots\\ \gamma_r\end{bmatrix}^{\top}\pmb{R}_{vv,r}^{\odot d}\begin{bmatrix}\gamma_1\\ \vdots\\ \gamma_r\end{bmatrix}.
\]
\end{lemma}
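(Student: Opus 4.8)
The plan is to exploit the first-order optimality (KKT) structure to reduce $\mathcal{H}$ to a sum of three pieces: two deterministic norms and one vanishing noise term. Write $\hat{\pmb B}=\sum_{i=1}^r\hat\gamma_i\hat\v_i^{\otimes d}$ and $\pmb B=\sum_{j=1}^r\beta_j\u_j^{\otimes d}$, so that $\sT=\pmb B+\tfrac1{\sqrt N}\sX$. First I would contract the first line of \eqref{eq:rkrapprox} with $\hat\v_i$ to get $\langle\sT-\hat{\pmb B},\hat\v_i^{\otimes d}\rangle=0$ for every $i\in[r]$, and then take the $\hat\gamma_i$-weighted sum to obtain the key orthogonality $\langle\sT-\hat{\pmb B},\hat{\pmb B}\rangle=0$. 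By the Pythagorean identity this gives $\lVert\sT-\hat{\pmb B}\rVert^2=\lVert\sT\rVert^2-\lVert\hat{\pmb B}\rVert^2$ (throughout, the first term of $\mathcal{H}$ is read as $\lVert\sT-\hat{\pmb B}\rVert^2$). Expanding $\lVert\sT\rVert^2=\lVert\pmb B\rVert^2+\tfrac2{\sqrt N}\langle\pmb B,\sX\rangle+\tfrac1N\lVert\sX\rVert^2$ and cancelling the $\tfrac1N\lVert\sX\rVert^2$ term, I would arrive at
\[
\mathcal{H}(\hat\gamma_1,\dots,\hat\gamma_r,\hat\v_1,\dots,\hat\v_r)=\lVert\pmb B\rVert^2-\lVert\hat{\pmb B}\rVert^2+\frac{2}{\sqrt N}\langle\pmb B,\sX\rangle .
\]

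Next I would pass to the limit in the two norm terms. Since $\lVert\pmb B\rVert^2=\sum_{j,k}\beta_j\beta_k\langle\u_j,\u_k\rangle^d$, Assumption~\ref{assump:general2} (convergence of $\pmb{\hat R}_{uu,r}$) gives $\lVert\pmb B\rVert^2\to\beta^{\top}\pmb R_{uu,r}^{\odot d}\beta$ with $\beta=(\beta_1,\dots,\beta_r)^{\top}$; likewise $\lVert\hat{\pmb B}\rVert^2=\sum_{i,\ell}\hat\gamma_i\hat\gamma_\ell\langle\hat\v_i,\hat\v_\ell\rangle^d\to\gamma^{\top}\pmb R_{vv,r}^{\odot d}\gamma$ by Assumption~\ref{assump:general} (almost sure convergence of the $\hat\gamma_i$ and of $\pmb{\hat R}_{vv,r}$), with $\gamma=(\gamma_1,\dots,\gamma_r)^{\top}$. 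Both are continuous functions of finitely many a.s.-convergent scalars, and they are exactly the two quadratic forms in the claimed expression.

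Finally I would show $\tfrac2{\sqrt N}\langle\pmb B,\sX\rangle\xrightarrow{\text{a.s.}}0$. Because $\pmb B$ is a deterministic \emph{symmetric} tensor, the permutation averaging in \eqref{eq:symmodel_noise} drops out and $\langle\pmb B,\sX\rangle=\langle\pmb B,\sW\rangle$, which is a centered Gaussian variable with variance $\lVert\pmb B\rVert^2\le(\sum_j|\beta_j|)^2$, a bound independent of $N$ since the $\beta_j$ are fixed. Hence $\tfrac2{\sqrt N}\langle\pmb B,\sX\rangle$ is $\mathcal N(0,O(1/N))$, and a Gaussian tail bound gives $\PP\bigl(|\tfrac2{\sqrt N}\langle\pmb B,\sX\rangle|>\varepsilon\bigr)\le 2e^{-cN\varepsilon^2}$ for some constant $c>0$, which is summable in $N$; Borel--Cantelli then yields the almost sure convergence. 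Combining the three contributions gives the stated identity.

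The computation is short, and the only genuinely delicate point is the last one: upgrading the vanishing of the signal/noise cross term from ``in probability'' to ``almost surely''. Even there I expect no real obstacle, since the key observation is merely that the relevant variance $\lVert\pmb B\rVert^2$ stays bounded in $N$; the one thing to be careful about is invoking the symmetry of $\pmb B$ so that $\langle\pmb B,\sX\rangle$ may be replaced by the cleaner $\langle\pmb B,\sW\rangle$ with i.i.d. coordinates before applying the tail bound.
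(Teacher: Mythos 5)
Your proposal is correct and follows essentially the same route as the paper: contracting the KKT conditions to cancel $\langle\sT,\hat{\pmb B}\rangle$ (which you phrase as orthogonality/Pythagoras), expanding $\lVert\sT\rVert^2$ so that $\tfrac1N\lVert\sX\rVert^2$ cancels, and passing to the limit of the two quadratic forms via Assumptions~\ref{assump:general} and~\ref{assump:general2}. Your handling of the signal--noise cross term (reducing to $\langle\pmb B,\sW\rangle$ by symmetry, then Gaussian tail plus Borel--Cantelli) is simply a more explicit justification of the $o(1)$ step the paper asserts from the deterministic boundedness of $\pmb B$.
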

\begin{proof}
See Appendix~\ref{sec:likelihood}.
\end{proof}

\subsection{Direct threshold}

As a corollary of Theorem~\ref{thm:generalized_align}, we would like to establish a threshold above which the detection of critical points of the optimization problem~\eqref{lowrkopt} is possible. To this end, we first use the arguments of Remark~\ref{rem:spectrum} to prove the following result
\begin{corollary}\label{thm:alignmentbeta2}
There exists some $\beta_{\operatorname{cri}}(\pmb{R}_{uu,r}) > 0$ which depends on $\pmb{R}_{uu,r}$ such that when\[
\lvert\beta_r \rvert > \beta_{\operatorname{cri}}(\pmb{R}_{uu,r}),
\]
any sequence of critical points satisfying Assumption~\ref{assump:general0} and Assumption~\ref{assump:general2}, where $\hat{\gamma}_r$ is the largest eigenvalue of $\flat(\sT)$, also satisfies the limiting alignments in Equation~\eqref{eq:bigsystems}.
\end{corollary}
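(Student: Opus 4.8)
The plan is to obtain the corollary directly from Theorem~\ref{thm:generalized_align} by showing that its standing hypothesis is automatically met once $|\beta_r|$ is large. Taking $s=r$, Theorem~\ref{thm:generalized_align} asserts that any sequence of critical points obeying Assumption~\ref{assump:general0} with $s=r$ (that is, Assumption~\ref{assump:general}) together with Assumption~\ref{assump:general2} satisfies~\eqref{eq:bigsystems} as soon as $|\gamma_r|>(d-1)\,|\kappa_1^{(r)}|$. Since $\gamma_r=\lim_N\hat\gamma_r$ and, by hypothesis, $\hat\gamma_r$ is the largest eigenvalue of $\flat(\sT)$, the corollary reduces to the spectral statement: there is $\beta_{\operatorname{cri}}(\pmb R_{uu,r})>0$ such that $|\beta_r|>\beta_{\operatorname{cri}}(\pmb R_{uu,r})$ forces $\lim_N\lambda_{\max}(\flat(\sT))>(d-1)\,|\kappa_1^{(r)}|$. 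Along the way I would also check that Assumption~\ref{assump:general} is in force; the only extra point beyond Assumption~\ref{assump:general0} is invertibility of $\pmb R_{vv,r}^{\odot(d-1)}$, which follows from the positive-definiteness of $\pmb{\hat R}_{vv,r}^{\odot(d-1)}$ (Remark following Lemma~\ref{lem:eigenreal}) once one knows, as in the argument below, that the estimated directions cannot degenerate when $|\beta_r|$ is large.

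For the spectral statement I would use the decomposition of Remark~\ref{rem:spectrum}, $\flat(\sT)=\B_N+\tfrac{1}{\sqrt N}\flat(\sX)$ with $\B_N=\sum_{i=1}^r\pmb{U}_i\A_i\pmb{U}_i^{\top}=\flat\bigl(\sum_j\beta_j\u_j^{\otimes d}\bigr)$ of rank at most $r^2$. By Lemma~\ref{lem:eigenreal}, $\flat(\sT)$ is diagonalizable with real spectrum; conjugating by the symmetrizer underlying that lemma (essentially $(\pmb{\hat R}_{vv,r}^{\odot(d-1)})^{1/2}\otimes\pId$ composed with the diagonal $\gamma$-rescaling) turns the identity into $\widetilde{\flat(\sT)}=\widetilde{\B_N}+\tfrac{1}{\sqrt N}\widetilde{\flat(\sX)}$ with all three matrices symmetric, the conjugation having condition number controlled by $\pmb R_{vv,r}^{\odot(d-1)}$ and the ratios $\gamma_r/\gamma_i$. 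By Theorem~\ref{th:explicitmu_noise} applied to the pure noise (cf.\ the corollary following it) the bulk of $\tfrac{1}{\sqrt N}\flat(\sX)$ sits in $[-|\kappa_1^{(r)}|,|\kappa_1^{(r)}|]$, while its operator norm is in any case $O(1)$ almost surely, being at most $\|\pmb{\hat W}_r\|\,\sup_{\|\w\|=1}\bigl\|\tfrac{1}{\sqrt N}\langle\sX,\w^{\otimes(d-2)}\rangle\bigr\|_{\mathrm{op}}$; a stronger no-outlier statement, $\bigl\|\tfrac{1}{\sqrt N}\flat(\sX)\bigr\|\xrightarrow{\mathrm{a.s.}}|\kappa_1^{(r)}|$, can be extracted from the resolvent control in the proof of Theorem~\ref{thm:generalized_measure} if needed. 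A Weyl inequality then gives $\lambda_{\max}(\flat(\sT))\ge\lambda_{\max}(\B_N)-C+o(1)$ with $C$ a bounded constant (equal to $|\kappa_1^{(r)}|$ if one uses the no-outlier bound), so it remains to bound $\lambda_{\max}(\B_N)$ from below.

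Bounding $\lambda_{\max}(\B_N)$ is where the work concentrates, and I expect it to be the main obstacle, because $\B_N=\flat\bigl(\sum_j\beta_j\u_j^{\otimes d}\bigr)$ depends on the critical point itself through the alignments $\langle\u_j,\hat\v_i\rangle$ and through $\pmb{\hat W}_r$. Testing the quadratic form of the symmetrized $\B_N$ against block vectors supported on the true directions $\u_j$ gives $\lambda_{\max}(\B_N)\ge|\beta_r|\,h(\pmb R_{uv,r},\pmb R_{vv,r},\ldots)+o(1)$ for an explicit $h$ read off from~\eqref{eq:Wdef2}; the danger is that $h$ degenerates when $\pmb R_{uv,r}\to0$, i.e.\ when the estimated directions become asymptotically orthogonal to $\operatorname{span}\{\u_1,\dots,\u_r\}$, in which case $\B_N\to0$. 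To exclude this I would use the hypothesis that $\hat\gamma_r$ is the \emph{largest} eigenvalue of $\flat(\sT)$ together with outlier-eigenvector perturbation theory: if $\lambda_{\max}(\flat(\sT))$ exceeds the bulk edge $|\kappa_1^{(r)}|$ by a fixed amount, then its eigenvector $[\hat\gamma_1\hat\v_1,\dots,\hat\gamma_r\hat\v_r]^{\top}$ has $\Omega(1)$ overlap with $\operatorname{Col}(\B_N)$, which lies in the span of the block vectors carrying the $\u_j$, and this bounds $\pmb R_{uv,r}$, hence $h$, away from zero; feeding that back into the Weyl bound gives $\lambda_{\max}(\flat(\sT))\ge c(\pmb R_{uu,r})\,|\beta_r|-C+o(1)$ with $c(\pmb R_{uu,r})>0$. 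The complementary regime, in which $\lambda_{\max}(\flat(\sT))$ stays within a bounded window of $|\kappa_1^{(r)}|$, I would rule out for $|\beta_r|$ large by running the same Weyl estimate with a test vector along $\u_r$ whose relevant overlap is bounded below independently of the critical point. One then takes $\beta_{\operatorname{cri}}(\pmb R_{uu,r})$ large enough that $c(\pmb R_{uu,r})\,\beta_{\operatorname{cri}}(\pmb R_{uu,r})-C>(d-1)\,|\kappa_1^{(r)}|$, the quantities $C$ and $|\kappa_1^{(r)}|$ being themselves controlled in terms of $\pmb R_{uu,r}$ through the condition-number bound (using that near-recovery forces $\pmb R_{vv,r}$ close to $\pmb R_{uu,r}$). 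The two delicate points to settle are (i) uniformity of the condition number of the symmetrizer over the class of critical points considered, and (ii) closing the self-consistent loop between a lower bound on $\lambda_{\max}(\flat(\sT))$ and a lower bound on the alignments, which I would make quantitative using~\eqref{eq:Wdef2}, \eqref{eq:Pdef2} and a quantitative eigenvector-perturbation estimate; the case of general $s\le r$ in Assumption~\ref{assump:general0} is handled in the same way, with $\flat_s(\sT)$ and Theorem~\ref{thm:mostgeneral} replacing $\flat(\sT)$ and Theorem~\ref{thm:generalized_measure}.
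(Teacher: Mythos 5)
Your proposal follows essentially the same route as the paper: write $\flat(\sT)$ as the finite-rank perturbation of $\tfrac{1}{\sqrt N}\flat(\sX)$ from Remark~\ref{rem:spectrum}, argue that large $\lvert\beta_r\rvert$ pushes the largest eigenvalue (hence $\gamma_r$) above $(d-1)\lvert\kappa_1^{(r)}\rvert$, and conclude by Theorem~\ref{thm:generalized_align}. The paper's proof is in fact shorter than yours --- it simply asserts this spectral step and then obtains $\beta_{\operatorname{cri}}(\pmb{R}_{uu,r})$ by elimination theory applied to the limiting system~\eqref{eq:bigsystems} --- so your additional Weyl/eigenvector-perturbation loop, while it honestly flags the genuine subtlety that the low-rank part depends on the critical point through the alignments and $\pmb{\hat W}_r$ (a point the paper glosses over), goes beyond the published argument and is itself left open at exactly that self-consistency step.
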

\begin{proof}
See Appendix~\ref{subsec:alignmentbeta}.
\end{proof}

As a complement to Corollary~\ref{thm:alignmentbeta2}, in numerical experiments we also observe the existence of some bound $\beta_{*}(\pmb{R}_{uu,s}) > 0$ which depends on $\pmb{R}_{uu,s}$ such that whenever
\[
\lvert\beta_1 \rvert \leq \beta_{*}(\pmb{R}_{uu,s}),
\]
there is no solutions to~\eqref{eq:bigsystems}, namely there does not exist any sequence of critical points satisfying Assumption~\ref{assump:general0}, Assumption~\ref{assump:general2} and \eqref{eq:bound_gamma} simultaneously. Combined with Corollary~\ref{thm:alignmentbeta2}, it indicates a phase transition phenomenon happening in the detection of the true signals through tensor decomposition \eqref{lowrkopt}, i.e., the existence of certain bound above which the detection of critical points of the optimization problem~\eqref{lowrkopt} is possible and below which such detections are not possible, which is visible in Figure~\ref{fig:rho0nu1} and Figure~\ref{fig:rho0nu07}.


\section{Summary statistics inference from estimated signals}\label{sec:inference}

For a given received tensor $\mathcal{T}$ we can compute a critical point of the rank-$r$ approximation loss, i.e., the squared error in \eqref{lowrkopt}. Then, if $(\hat\gamma_i \hat\v_i)_i$ corresponds to an optimal point of the likelihood, it is the maximum likelihood estimator of $(\beta_i \u_i)_i$. In previous sections we have seen the performance of the convergent critical points summary statistics. In particular, we saw that $\hat\gamma_i$ is an asymptotically biased estimator of $\beta_i$ since  $\beta_i\neq\gamma_i$ in general.
Given any $N$, for an observation $\sT$, instead of considering $\hat\gamma_1, \dots,\hat\gamma_r, \hat\v_1, \dots, \hat\v_r$ as estimators for $\beta_1, \dots,\beta_r, \u_1, \dots,\u_r$, Theorem~\ref{thm:generalized_align} gives us a way to obtain asymptotically unbiased estimators by viewing the equations of System~\eqref{eq:bigsystems} as equations in $\beta_1, \dots,\beta_r, \pmb{R}_{uu,r},\pmb{R}_{uv,r}$.
Indeed, we can design a plug-in estimator of these quantities, namely, by solving this polynomial system of equations~\eqref{eq:bigsystems} in $\pmb{\hat\beta}, \pmb{R}_{uu,r},\pmb{R}_{uv,r}$ with $\pmb{\hat\beta}=(\beta_1,\dots,\beta_r)$.

To this end, let us define
\begin{equation}\label{eq:changeofvarCD}
\begin{array}{l}
    \pmb{\hat M} =  \pmb{D}_{\hat\gamma}\pmb{\hat R}_{vv,s}^{\odot (d-1)}+  \frac{1}{d}\pmb{\hat R}_{vv,s}^{\odot (d-2)}\odot\left(\pmb{\hat G}\pmb{\hat W}_s\right)\\
    \pmb{\hat C} =  \pmb{\hat R}_{vv,s}\pmb{\hat M} + \frac{1}{d(d-1)}\left(\pmb{\hat R}_{vv,s}^{\odot (d-1)}\pmb{\hat G}\pmb{\hat W}_r\right)^{\top}
\end{array}
\end{equation}
where 
\[
\hat{\pmb{G}} = \hat{\pmb{G}}_r\left(\frac{\hat\gamma_r}{d-1}\right)
\]
is the plugin estimator whose expression is given by~\eqref{eq:measurefpeqmatf_noise}.
Note that $\pmb{\hat M}$ and $\pmb{\hat C}$ depend on the estimated signals $\hat\gamma_i\hat\v_i$, i.e., they are completely determined by the computed critical point. 

Then, let us denote by $\mathcal{S}$ the set of potential plug-in estimates $(\pmb{\hat\beta},\pmb{\hat R}_{uv,r},\pmb{\hat R}_{uu,r})$ satisfying
\begin{equation}\label{eq:system_poly}
\begin{dcases}
\pmb{\hat R}_{uu,r}\pmb{D}_{\pmb{\hat\beta}} \pmb{\hat  R}_{uv,s}^{\odot (d-1)} &= \pmb{\hat  R}_{uv,s} \pmb{\hat  M} \\
\pmb{\hat  R}_{uv,s}^{\top}\pmb{D}_{\pmb{\hat\beta}} \pmb{\hat R}_{uv,s}^{\odot (d-1)}  &= \pmb{\hat C}
\end{dcases}.
\end{equation}
Explicit solutions of \eqref{eq:system_poly} cannot be provided in general. However, in the rank-two case, we provide a characterization of the solutions in $\mathcal{S}$ in Appendix~\ref{sec:proofmlecorrection}.
Since $\mathcal{S}$ may contain multiple solutions, we pick the solutions maximizing the asymptotic likelihood. More explicitly, Lemma~\ref{lemma:likelihood} finally inspires us to define an estimator for $\pmb{\beta}$ as
\[
(\pmb{\hat\beta},\pmb{\hat R}_{uv,r},\pmb{\hat R}_{uu,r}) \in \arg\max_{(\pmb{\beta}, \pmb{R}_{uu,r}, \pmb{R}_{uv,r})\in\mathcal{S}} \begin{bmatrix}\beta_1\\ \vdots\\ \beta_r\end{bmatrix}^{\top}\pmb{R}_{uu,r}^{\odot d}\begin{bmatrix}\beta_1\\ \vdots\\ \beta_r\end{bmatrix}.
\]

Note that it is possible that $\mathcal{S}$ contains no solution. In this case, no refined estimator can be provided. This intuitively means that the found critical point is not informative enough.
In Figure~\ref{fig:beta_estimated} we depict the results of this process in the rank-two case where we picked different values for $\beta_1$ and $\beta_2$ and we fixed
\[
\pmb{R}_{uu,2} = \begin{bmatrix}1 & \rho\\ \rho & 1\end{bmatrix} = \begin{bmatrix}1 & 0.7\\ 0.7 & 1\end{bmatrix}.
\]

\begin{figure}[h!]
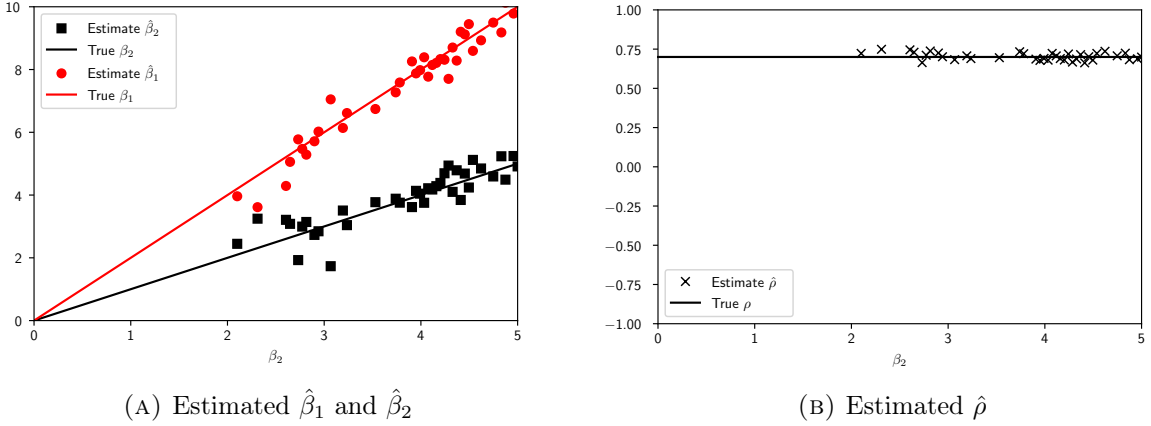

\centering
\begin{subfigure}{.5\textwidth}
  \centering
\resizebox{\columnwidth}{!}{\input{images/betas0.7_ratio0.5.pgf}}
\caption{Estimated $\hat\beta_1$ and $\hat\beta_2$}
\end{subfigure}%
\begin{subfigure}{.5\textwidth}
  \centering
\resizebox{\columnwidth}{!}{\input{images/rho0.7_ratio0.5.pgf}}
\caption{Estimated $\hat\rho$}
\end{subfigure}
\caption{Solutions with $\beta_1=2\beta_2$ and $\rho=0.7$}\label{fig:beta_estimated}
\end{figure}


\appendix

\section{Proof of Proposition~\ref{prop:lowrkappopt}}\label{subsec:proplowrkopt}

For the Lagrangian~\eqref{eq:Lagop}, the best rank-$r$ approximation $(\hat\gamma_1, \dots, \hat\gamma_r, \hat\v_1, \dots, \hat\v_r)$ needs to satisfy  	
\begin{equation}\label{eq:Lagde}
\begin{dcases}
\frac{\partial \sL}{\partial \gamma_i} = \langle \sT - \sum_{j=1}^r \hat\gamma_j \hat\v_j^{\otimes d}, \hat\v_i^{\otimes d} \rangle = 0 \\
\frac{\partial \sL}{\partial \v_i} = \langle \sT - \sum_{j=1}^r \hat\gamma_j \hat\v_j^{\otimes d}, \hat\v_i^{\otimes (d-1)} \pmb{w}_i \rangle = 0 \\
\frac{\partial \sL}{\partial \mu_i} = \langle \hat\v_i, \hat\v_i \rangle - 1 = 0
\end{dcases} \enspace,
\end{equation}
where $\pmb{w}_i$ is any tangent vector of the unit sphere at $\hat\v_i$. Since any vector $\u$ in $\R^N$ can be written as $\u = \alpha_i \hat\v_i + \pmb{w}_i$ for some $\alpha_i \in\mathbb{R}$ and tangent vector $\pmb{w}_i$ of the unit sphere at $\hat\v_i$, by~\eqref{eq:Lagde} we have
\[
 \langle \sT - \sum_{j=1}^r \hat\gamma_j \hat\v_j^{\otimes d}, \hat\v_i^{\otimes (d-1)}\u \rangle = \langle \sT - \sum_{j=1}^r \hat\gamma_j \hat\v_j^{\otimes d}, \hat\v_i^{\otimes (d-1)} (\alpha_i \hat\v_i + \pmb{w}_i) \rangle = 0,
\]
which implies that
\begin{equation*}
\langle \sT - \sum_{j=1}^r \hat\gamma_j \hat\v_j^{\otimes d}, \hat\v_i^{\otimes (d-1)} \rangle = 0.
\end{equation*}


\section{Proof of Proposition~\ref{prop:lowrkappopt2}}\label{subsec:proplowrkopt2}

We reformulate the first equation of~\eqref{eq:rkrapprox} as follows
\begin{equation}\label{eq:kkteq}
\begin{dcases}
\langle \sT, \hat\v^{\otimes (d-1)}_1 \rangle = \hat\gamma_1 \hat\v_1 + \hat\gamma_2 \langle \hat\v_1, \hat\v_2 \rangle^{d-1} \hat\v_2 + \cdots +  \hat\gamma_r \langle \hat\v_1, \hat\v_r \rangle^{d-1} \hat\v_r \\
\qquad \dots \\
\langle \sT, \hat\v^{\otimes (d-1)}_r \rangle = \hat\gamma_1 \langle \hat\v_1, \hat\v_r \rangle^{d-1} \hat\v_1 + \cdots + \hat\gamma_{r-1} \langle \hat\v_{r-1}, \hat\v_r \rangle^{d-1} \hat\v_{r-1} +  \hat\gamma_r \hat\v_r
\end{dcases},
\end{equation}
which can be written in the following matrix form
\begin{equation}\label{eq:kktma}
\begin{bmatrix}
    \langle \sT, \hat\v^{\otimes (d-2)}_1 \rangle & 0 & \dots  & 0 \\
    0 & \langle \sT, \hat\v^{\otimes (d-2)}_2 \rangle & \dots  & 0 \\
    \vdots & \vdots & \ddots & \vdots \\
    0 & 0 & \dots  & \langle \sT, \hat\v^{\otimes (d-2)}_r \rangle
\end{bmatrix} 
\begin{bmatrix}
\hat\v_1 \\
\hat\v_2 \\
\vdots \\
\hat\v_r
\end{bmatrix} =
  \begin{bmatrix}
    1  & \dots  & \langle \hat\v_1, \hat\v_r \rangle^{d-1} \\
    \vdots & \ddots  & \vdots\\
    \langle \hat\v_1, \hat\v_r \rangle^{d-1} & \dots  & 1
\end{bmatrix}
  \begin{bmatrix}
\hat\gamma_1\hat\v_1 \\
\hat\gamma_2\hat\v_2 \\
\vdots \\
\hat\gamma_r\hat\v_r
\end{bmatrix}.
\end{equation}
When $\pmb{\hat{R}}^{\odot (d-1)}_{vv,r}$ is invertible, by definition of $\flat(\sT)$, Equation~\eqref{eq:kktma} is of the form
\begin{equation}
\flat(\sT) \begin{bmatrix}
\hat\gamma_1\hat\v_1 \\
\hat\gamma_2\hat\v_2 \\
\vdots \\
\hat\gamma_r\hat\v_r
\end{bmatrix} = \hat\gamma_r \begin{bmatrix}
\hat\gamma_1\hat\v_1 \\
\hat\gamma_2\hat\v_2 \\
\vdots \\
\hat\gamma_r\hat\v_r
\end{bmatrix}.
\end{equation}
It remains to show that $\pmb{\hat{R}}^{\odot (d-1)}_{vv,r}$ is invertible when ${\hat \v_1}^{\otimes (d-1)}, \dots, {\hat \v_r}^{\otimes (d-1)}$ are linearly independent. In fact, we will show that $\pmb{\hat{R}}^{\odot (d-1)}_{vv,r}$ is positive-definite. Given any nonzero vector $\x = (x_1, \dots, x_r) \in \R^r$,
\begin{multline*}
\x^{\top} \pmb{\hat{R}}^{\odot (d-1)}_{vv,r} \x = \sum_{i, j} x_i \langle \hat\v_i, \hat\v_j \rangle^{d-1} x_j = \sum_{i, j} x_i \langle {\hat\v_i}^{\otimes (d-1)}, {\hat\v_j}^{\otimes (d-1)} \rangle x_j = \sum_{i, j}  \langle x_i {\hat\v_i}^{\otimes (d-1)}, x_j {\hat\v_j}^{\otimes (d-1)} \rangle \\
= \langle \sum_i x_i {\hat\v_i}^{\otimes (d-1)}, \sum_j x_j {\hat\v_j}^{\otimes (d-1)} \rangle = \lVert \sum_i x_i {\hat\v_i}^{\otimes (d-1)} \rVert^2 > 0.
\end{multline*}
Conversely, if $\pmb{\hat{R}}^{\odot (d-1)}_{vv,r}$ is positive-definite, ${\hat \v_1}^{\otimes (d-1)}, \dots, {\hat \v_r}^{\otimes (d-1)}$ have to be linearly independent.

\section{Proof of Lemma~\ref{lem:eigenreal}}\label{subsec:lemeigenreal}
In the proof of Proposition~\ref{prop:lowrkappopt2} we have seen that the matrix $\pmb{\hat{R}}^{\odot (d-1)}_{vv,r}$ is symmetric positive-definite. Thus its square root $\Big(\pmb{\hat{R}}^{\odot (d-1)}_{vv,r}\Big)^{1/2}$ is well defined. Notice that the spectrum of $\flat(\sT)$ satisfies
\[
\begin{split}
\operatorname{Spec}\bigl(\flat(\sT)\bigr) &= \operatorname{Spec}\bigg(\pmb{\hat{R}}^{\odot (d-1)}_{vv,r} \begin{bmatrix}
\frac{\hat\gamma_r}{\hat\gamma_1} \langle \sT, \hat\v_1^{\otimes (d-2)}\rangle & 0 & 0 \\
0 & \ddots & 0 \\
0 & 0 & \langle \sT, \hat\v_r^{\otimes (d-2)}\rangle
\end{bmatrix}\bigg)\\
&= \operatorname{Spec}\bigg(\Big(\pmb{\hat{R}}^{\odot (d-1)}_{vv,r}\Big)^{1/2} \begin{bmatrix}
\frac{\hat\gamma_r}{\hat\gamma_1} \langle \sT, \hat\v_1^{\otimes (d-2)}\rangle & 0 & 0 \\
0 & \ddots & 0 \\
0 & 0 & \langle \sT, \hat\v_r^{\otimes (d-2)}\rangle
\end{bmatrix} \Big(\pmb{\hat{R}}^{\odot (d-1)}_{vv,r}\Big)^{1/2}\bigg).
\end{split}
\]
It is clear that $\M = \Big(\pmb{\hat{R}}^{\odot (d-1)}_{vv,r}\Big)^{1/2} \begin{bmatrix}
\frac{\hat\gamma_r}{\hat\gamma_1} \langle \sT, \hat\v_1^{\otimes (d-2)}\rangle & 0 & 0 \\
0 & \ddots & 0 \\
0 & 0 & \langle \sT, \hat\v_r^{\otimes (d-2)}\rangle
\end{bmatrix} \Big(\pmb{\hat{R}}^{\odot (d-1)}_{vv,r}\Big)^{1/2}$ is symmetric because each $\langle \sT, \hat\v_i^{\otimes (d-2)}\rangle$ is symmetric, which implies that the eigenvalues of $\flat(\sT)$ are real.

Assume that $\operatorname{Spec}\bigl(\flat(\sT)\bigr) = \{\rho_1\bigl(\flat(\sT)\bigr), \dots, \rho_{rN}\bigl(\flat(\sT)\bigr)\}$. Then
\begin{multline}
S_{\mu_{\flat(\sT)}} (z) = \frac{1}{rN} \sum_{i=1}^{rN} \int \frac{\delta_{\rho_i(\flat(\sT))}}{\rho - z} \,d\rho = \frac{1}{rN} \sum_{i=1}^{rN} \frac{1}{\rho_i\bigl(\flat(\sT)\bigr) - z} \\
= \frac{1}{rN} \sum_{i=1}^{rN} \frac{1}{\rho_i(\M) - z} = \frac{1}{rN} \operatorname{Tr} (\Q_{\pmb{M}}(z))= \frac{1}{rN} \operatorname{Tr} (\Q(z)),
\end{multline}
which completes the proof.


\section{Proof of Theorem~\ref{thm:generalized_measure}}\label{sec:prooftotal}

We first consider the case where $\sT=\frac{1}{\sqrt{N}}\sX$, i.e., the noise case.
In this section, we will focus on the case where the sequence $(\gamma_1,\dots,\gamma_r,\hat\v_1, \dots, \hat\v_r)$ is a sequence of critical points, i.e., that satisfies \eqref{eq:rkrapprox}. 
Note that, in order to lighten the notations, we will denote $\pmb{\hat W}_r$ by $\pmb{\hat W}$.

We will first establish the limit of $\frac{1}{N} \mathbb{E}[\Tr(\Q^{ij}(z))]$.
For this purpose, let us denote
\begin{equation}\label{eq:notationsA}
\A = \begin{bmatrix}\A^{11} & 0 & 0\\ 0 & \ddots & 0 \\ 0 &  0 &\A^{rr}\end{bmatrix} = \begin{bmatrix} \langle \sT, \hat\v_1^{\otimes (d-2)}\rangle & 0 & 0\\ 0 & \ddots & 0 \\ 0 &  0 & \langle \sT, \hat\v_r^{\otimes (d-2)}\rangle\end{bmatrix},
\end{equation}
and we start from the identity $\Q(z) (\flat(\sT) - z\pId) = \pId$ that gives us the following equations  for any $1\leq s,t\leq r$,
\[
\left\{\begin{array}{ll}
\frac{1}{N}  \sum_{u=1}^r \mathbb{E}\Tr[\hat{W}_{ut}\Q^{su}(z) \A^{tt}] - z \frac{1}{N} \mathbb{E}\Tr[\Q^{st}(z)]  = 1 &\text{if $s=t$}\\
\frac{1}{N}  \sum_{u=1}^r \mathbb{E}\Tr[\hat{W}_{ut}\Q^{su}(z) \A^{tt}] - z \frac{1}{N} \mathbb{E}\Tr[\Q^{st}(z)]   = 0 &\text{if $s\neq t$}
\end{array}\right..
\]
Since $\pmb{\hat W}\xrightarrow{a.s.} \pmb{W}$, we can replace $\pmb{\hat W}$ by $\pmb{W}$ in the expectations by noticing that
\[
\mathbb{E}\Tr[\hat{W}_{ut}\Q^{su}(z) \A^{tt}] = \mathbb{E}\Tr[W_{ut}\Q^{su}(z) \A^{tt}] + \mathbb{E}\Tr[(\hat{W}_{ut} - W_{ut})\Q^{su}(z) \A^{tt}],
\]
and controlling the second term by
\[
\begin{split}
\frac{1}{N} \left| \mathbb{E}\Tr[(\hat{W}_{ut} - W_{ut})\Q^{su}(z) \A^{tt}]\right| &\leq \max_{u,t} |W_{ut} - \hat{W}_{ut}|\, \lVert\pmb{\hat W}^{-1}\rVert_{\infty} \, \frac{1}{N} \left| \mathbb{E}\Tr[(\Q(z) (\pmb{\hat W}\otimes\pId)\A)_{st}] \right|\\
 &\leq \max_{u,t}|W_{ut} - \hat{W}_{ut}| \, \lVert\pmb{\hat W}^{-1}\rVert_{\infty} \, \Vert\Q(z) (\pmb{\hat W}\otimes\pId)\A\Vert_2\\
 &= \max_{u,t}|W_{ut} - \hat{W}_{ut}|\, \lVert\pmb{\hat W}^{-1}\rVert_{\infty} \, \Vert\pId + z\Q(z) \Vert_2 \xrightarrow{a.s} 0.
\end{split}
\]
Thus, for any $1\leq s,t\leq r$,
\begin{equation}\label{eq:allcrieqs_gal}
\left\{\begin{array}{ll}
\frac1N  \sum_{u=1}^r \mathbb{E}\Tr[W_{ut}\Q^{su}(z) \A^{tt}] - z \frac1N \mathbb{E}\Tr[\Q^{st}(z)]   \xrightarrow{a.s} 1 &\text{if $s=t$}\\
\frac1N  \sum_{u=1}^r \mathbb{E}\Tr[W_{ut}\Q^{su}(z) \A^{tt}] - z \frac1N \mathbb{E}\Tr[\Q^{st}(z)]  \xrightarrow{a.s} 0 &\text{if $s\neq t$}
\end{array}\right..
\end{equation}

Therefore, we need to control the term $\frac1N \sum_{k\neq i} \mathbb{E}\Tr[\Q^{su}(z) \A^{tt}]$ for any $1\leq s,t\leq r$.
For the sake of simplicity of exposition, we will consider in the sequel the case $s=u=1$ since the other terms can be controlled in a similar way.
Recall that
\[
A_{ji}^{11} = \frac{1}{\sqrt{N}}\sum_{\ell_1, \dots, \ell_{d-2}=1}^N X_{\ell_1\dots \ell_{d-2}ji} \hat{v}^1_{\ell_1}\cdots \hat{v}^1_{\ell_{d-2}} \enspace,
\]
where $\hat{v}^1_j$ denotes the $j$th entry of $\hat{v}_1$, thus, using Stein lemma,
\begin{equation}\label{eq:Q11Ark2}
\begin{split}
&\frac{1}{N} \E\bigl[\sum_{i,j=1}^N Q^{11}_{ij}A_{ji}^{11}\bigr] = \frac{1}{N\sqrt{N}} \sum_{i, j, \ell_1, \dots, \ell_{d-2}=1}^N \E\bigl[ Q^{11}_{ij} X_{\ell_1\dots \ell_{d-2}ji} \hat{v}^1_{\ell_1}\cdots \hat{v}^1_{\ell_{d-2}}\bigr] \\
= &\!\begin{multlined}[t]
\underbrace{\frac{1}{N\sqrt{N}} \sum_{i, j, \ell_1, \dots, \ell_{d-2}=1}^N \sigma^2_{\ell_1\dots\ell_{d-2}ji}\E \bigl[ \hat{v}^1_{\ell_1}\cdots \hat{v}^1_{\ell_{d-2}} \frac{\partial Q^{11}_{ij}}{\partial X_{\ell_1\dots \ell_{d-2}ji}}\bigr]}_{\sA_1} \\
+ \underbrace{\frac{1}{N\sqrt{N}} \sum_{i, j, \ell_1, \dots, \ell_{d-2}=1}^N \sum_{k=1}^{d-2} \sigma^2_{\ell_1\dots\ell_{d-2}ji}\E \bigl[ \hat{v}^1_{\ell_1}\cdots \hat{v}^1_{\ell_{k-1}} \frac{\partial \hat{v}^{1}_{\ell_k}}{\partial X_{\ell_1\dots \ell_{d-2}ji}} \hat{v}^1_{\ell_{k+1}}\cdots \hat{v}^1_{\ell_{d-2}}\, Q^{11}_{ij}\bigr]}_{\sA_2}.
\end{multlined}
\end{split}
\end{equation}
We will first prove that $\sA_2$ is negligible asymptotically. Then, we will focus on the evaluation of $\sA_1$.
Note that, in order to ensure that the derivative of $\hat\v_1$ with respect to $X_{\ell_1\dots \ell_{d-2}ji}$ exists, we can use the same arguments of the proof of \cite[Lemma~8]{GCC22}.

\subsection{Control of $\mathcal{A}_2$}\label{sec:A1}
In order to control $\mathcal{A}_2$, we need to quantify the dependency of $\hat\v_1$ with respect to $\sX$ using the fixed point equation satisfied by $\hat\v_1,\dots,\hat\v_r$.

Recall that the critical points need to satisfy $\sF(\{\hat\v_i\}, \{\hat\gamma_i\})=\mathbf{0}$ with
\[
\sF(\{\hat\v_i\}, \{\hat\gamma_i\})= \begin{bmatrix}
\langle \sT, \hat\v_1^{\otimes (d-1)}\rangle - \sum_{k=1}^r \hat\gamma_k \langle\hat\v_1,\hat\v_k\rangle^{d-1} \hat\v_k \\
\vdots\\
\langle \sT, \hat\v_r^{\otimes (d-1)}\rangle - \sum_{k=1}^r \hat\gamma_k \langle\hat\v_r,\hat\v_k\rangle^{d-1} \hat\v_k \\
\langle \hat\v_1, \hat\v_1 \rangle - 1 \\
\vdots \\
\langle \hat\v_r, \hat\v_r \rangle - 1
\end{bmatrix}.
\]
Then the Jacobian of $\sF$ with respect to $(\{\hat\v_i\}, \{\hat\gamma_i\})$ is given by
\begin{equation}\label{eq:Jacobian1}
\pmb{J} = \begin{bmatrix}\pmb{C} & \pmb{R}\\ \pmb{L} & \mathbf{0}\end{bmatrix}
\end{equation}
with
\begin{equation}\label{eq:defC_gal}
\pmb{C} = (d-1)\Q^{-\top}\left(\frac{\hat\gamma_r}{d-1}\right)\pmb{\hat W}^{-\top} - \pmb{\tilde C} \enspace
\end{equation}
and 
\[
\pmb{\tilde C} = (d-1) \begin{bmatrix}
\sum_{k\neq 1}\hat\gamma_k\langle \hat\v_1, \hat\v_k\rangle^{d-2}\hat\v_k\hat\v_k^{\top} & \hat\gamma_2\langle \hat\v_1, \hat\v_2\rangle^{d-2}\v_2\v_1^{\top} & \dots &  \hat\gamma_r\langle \hat\v_1, \hat\v_r\rangle^{d-2}\hat\v_r\hat\v_1^{\top}\\
\hat\gamma_1\langle \hat\v_2, \hat\v_1\rangle^{d-2}\hat\v_1\hat\v_2^{\top} & \sum_{k\neq 2}\hat\gamma_k\langle \hat\v_2, \hat\v_k\rangle^{d-2}\hat\v_k\hat\v_k^{\top}   &  \dots & \hat\gamma_r\langle \hat\v_2, \hat\v_r\rangle^{d-2}\hat\v_r\hat\v_2^{\top}\\
\vdots &\vdots  &  \ddots & \vdots\\
\hat\gamma_1\langle \hat\v_r, \hat\v_1\rangle^{d-2}\hat\v_1\hat\v_r^{\top} & \hat\gamma_2\langle \hat\v_r, \hat\v_2\rangle^{d-2}\hat\v_2\hat\v_r^{\top} & \dots& \sum_{k\neq 1}\hat\gamma_k\langle \hat\v_r, \hat\v_k\rangle^{d-2}\hat\v_k\hat\v_k^{\top} \\
\end{bmatrix}
\]
and
\[
\pmb{L} = \begin{bmatrix}2\hat\v_1^{\top} & 0 & 0 \\
0 & \ddots & 0\\ 0 & 0 & 2\hat\v_r^{\top}\end{bmatrix} \text{\quad and \quad} 
\pmb{R} = -\begin{bmatrix}
\hat\v_1 & \dots & \langle\hat\v_1,\hat\v_r\rangle^{d-1}\hat\v_r\\
\dots & \ddots & \dots\\
\langle\hat\v_r,\hat\v_1\rangle^{d-1} \hat\v_1 &\dots &  \hat\v_r
\end{bmatrix}.
\]
Since we assumed that $\hat\gamma_r\xrightarrow{a.s.}\gamma_r$ and that $\gamma_r$ lies outside the spectrum of $\Q$, with Assumption~\ref{assump:general}, $\Q\left(\frac{\hat\gamma_r}{d-1}\right)$ is invertible and has a bounded spectral norm.
Furthermore, the rank of $\pmb{\tilde C}$ is at most $r^2$. Therefore, $\pmb{C}$ is almost surely invertible, and by Woodbury identity,
\begin{equation}\label{eq:defC-1}
\pmb{C}^{-1} = \frac{1}{d-1}\pmb{\hat W}^{\top}\Q(\frac{\hat\gamma_r}{d-1})^{\top} +  \frac{1}{d-1}\pmb{\hat W}^{\top}\Q(\frac{\hat\gamma_r}{d-1})^{\top}\pmb{\tilde{C}}\left(\Q(\frac{\hat\gamma_r}{d-1})^{-\top}\pmb{\hat W}^{-\top} - \pmb{\tilde{C}}\right)^{-1}
\end{equation}
and by the Schur complement formula,
\begin{equation}\label{eq:defJ-1}
\pmb{J}^{-1} = \left(\begin{matrix}
\pmb{C}^{-1} + \pmb{C}^{-1}\pmb{R}\pmb{S}^{-1}\pmb{L}\pmb{C}^{-1} & -\pmb{C}^{-1}\pmb{R}\pmb{S}^{-1} \\
-\pmb{S}^{-1}\pmb{L}\pmb{C}^{-1} & \pmb{S}^{-1}
\end{matrix}
\right) \text{\quad with \quad}\pmb{S}=\pmb{L}\pmb{C}^{-1}\pmb{R}.
\end{equation}

On the other hand, denote by $X_{\alpha} = X_{i_1\dots i_d}$. 
By the implicit function theorem, we have that
\begin{equation}\label{eq:implicit}
\left(\begin{matrix}
\frac{\partial \hat\v^{\top}_1}{\partial X_{\alpha}} &
\dots &
\frac{\partial \hat\v^{\top}_r}{\partial X_{\alpha}} &
\frac{\partial \hat\gamma_1}{\partial X_{\alpha}} &
\dots &
\frac{\partial \hat\gamma_r}{\partial X_{\alpha}}
\end{matrix}
\right)^{\top} = - \pmb{J}^{-1}\, \frac{\partial \sF}{\partial X_{\alpha}}.
\end{equation}
In particular,
\begin{equation}\label{eq:partialvD}
\begin{bmatrix}
\frac{\partial \hat\v_1}{\partial X_{\alpha}} \\
\vdots\\
\frac{\partial \hat\v_r}{\partial X_{\alpha}}
\end{bmatrix} = - \bigl(\pmb{C}^{-1} + \pmb{C}^{-1}\pmb{R}\pmb{S}^{-1}\pmb{L}\pmb{C}^{-1}\bigr)\begin{bmatrix}
\frac{\partial \langle \sT, \hat\v_1^{\otimes(d-1)}\rangle }{\partial X_{\alpha}} \\
\vdots\\
\frac{\partial \langle \sT, \hat\v_r^{\otimes(d-1)}\rangle }{\partial X_{\alpha}}
\end{bmatrix}.
\end{equation}

Note that, if $j \notin\{i_1, \dots, i_d\}$, the derivative of the $j$th entry of $\langle \sT, \hat\v_1^{\otimes(d-1)}\rangle$ with respect to $X_{\alpha}$ is zero, i.e.,
\[
\frac{\partial \langle \sT, \hat\v_1^{\otimes(d-1)}\rangle_j }{\partial X_{\alpha}} = 0,
\]
where $X_{\alpha}$ denotes $X_{i_1\dots i_d}$. Otherwise, when $j \in\{i_1, \dots, i_d\}$, assume that
\[
\{i_1, \dots, i_d\}\setminus \{j\} = \{\ell_1, \dots, \ell_1, \dots, \ell_p, \dots, \ell_p\}
\]
as a multiset, where the multiplicity of $\ell_k$ is $m_k$. Then
\begin{equation}\label{eq:multTvd-1gal}
\frac{\partial \langle \sT, \hat\v_1^{\otimes(d-1)}\rangle_j }{\partial X_{\alpha}} = \frac{1}{\sqrt{N}} \binom{d-1}{m_1, \dots, m_p}(\hat{v}^1_{\ell_1})^{m_1}\cdots (\hat{v}^1_{\ell_p})^{m_p}.
\end{equation}
For convenience of notation, we use $Q^{11}_{ij}$ to denote the $(i, j)$th entry of $\Q^{11}(z)$.

In particular, for $\sA_2$, we focus on the following term appearing in $\sA_2$ as an example of the argument
\begin{equation}\label{eq:partialv1exm}
\frac{1}{N\sqrt{N}} \sum_{i, j, \ell_1, \dots, \ell_{d-2}=1}^N \sigma^2_{\ell_1\dots\ell_{d-2}ji}\E \bigg[ \frac{\partial \hat{v}^{1}_{\ell_1}}{\partial X_{\ell_1\dots \ell_{d-2}ji}} \hat{v}^1_{\ell_{2}}\cdots \hat{v}^1_{\ell_{d-2}} \, Q^{11}_{ij}\bigg] \enspace.
\end{equation}
Assume $\pmb{C}^{-1} + \pmb{C}^{-1}\pmb{R}\pmb{S}^{-1}\pmb{L}\pmb{C}^{-1}$ has the form
\begin{equation}\label{eq:Ddef}
\pmb{C}^{-1} + \pmb{C}^{-1}\pmb{R}\pmb{S}^{-1}\pmb{L}\pmb{C}^{-1} = \left(\begin{matrix}
\pmb{D}^{11} & \dots & \pmb{D}^{1r} \\
\vdots & \ddots & \vdots \\
\pmb{D}^{r1} &\dots& \pmb{D}^{rr}
\end{matrix}\right).
\end{equation}
Then by~\eqref{eq:var} and~\eqref{eq:multTvd-1gal}, we have the following estimation for the term~\eqref{eq:partialv1exm},
\begin{equation}\label{eq:crieq1rk2}
\begin{split}
&\left\lvert\frac{1}{N\sqrt{N}} \sum_{i, j, \ell_1, \dots, \ell_{d-2}=1}^N \sigma^2_{\ell_1\dots\ell_{d-2}ji}\E \bigg[\frac{\partial \hat{v}^{1}_{\ell_1}}{\partial X_{\ell_1\dots \ell_{d-2}ji}} \hat{v}^1_{\ell_{2}}\cdots \hat{v}^1_{\ell_{d-2}} \, Q^{11}_{ij}\bigg] \right\rvert\\
= &\frac{1}{N\sqrt{N}} \sum_{i, j, \ell_1, \dots, \ell_{d-2}=1}^N \sigma^2_{\ell_1\dots\ell_{d-2}ji} \left\lvert\E \bigg[\sum_{k=1}^N\sum_{s=1}^r D^{1s}_{\ell_1 k}\frac{\partial \langle \sT, \hat\v_s^{\otimes(d-1)}\rangle_k }{\partial X_{\alpha}} v^1_{\ell_{2}}\cdots \hat{v}^1_{\ell_{d-2}} \, Q^{11}_{ij}\bigg] \right\rvert\\
\le &\!\begin{multlined}[t]
\frac{1}{N^2} \E\bigg[\bigl(\left\lvert\Tr(\pmb{D}^{11})\right\rvert + (d-3) \left\lvert \hat\v_1^{\top}\pmb{D}^{11}\v_1 \right\rvert \bigr)\left\lvert \hat\v_1^{\top}\hat\nu\Q^{11}\hat\v_1 \right\rvert \bigg] + \\
\frac{1}{N^2}\bigg[\E\left\lvert \hat\v_1^{\top}\pmb{D}^{11} (\hat\nu\Q^{11})^{\top}\hat\v_1\right\rvert + \E\left\lvert \hat\v_1^{\top} \pmb{D}^{11} \hat\nu\Q^{11}\hat\v_1 \right\rvert\bigg] \\
+\sum_{s=2}^r \frac{1}{N^2} \E\bigg[ \bigl(\left\lvert\Tr(\pmb{D}^{1s})\right\rvert \Vert \hat\v_1\odot \hat\v_s\Vert_1 + (d-3) \left\lvert \hat\v_s^{\top}\pmb{D}^{1s}\hat\v_1\right\rvert \bigr)\bigl(\Vert \hat\v_1\odot \hat\v_s\Vert_1\bigr)^{d-4} \left\lvert \hat\v_s^{\top}\hat\nu\Q^{11}\hat\v_s\right\rvert\bigg] \\
+ \frac{1}{N^2} \E \bigg[ \bigl(\Vert \hat\v_1\odot \hat\v_s\Vert_1\bigr)^{d-3} \Big(\left\lvert \hat\v_s^{\top}\pmb{D}^{1s} (\hat\nu\Q^{11})^{\top}\hat\v_s \right\rvert + \left\lvert \hat\v_s^{\top}\pmb{D}^{1s} \hat\nu\Q^{11}\hat\v_s \right\rvert \Big)\bigg] \enspace,
\end{multlined}
\end{split}
\end{equation}
where $\odot$ denotes the Hadamard product. Since the spectral norms
\[
\lVert \Q^{11} \rVert_2 \quad \text{and} \quad \lVert \pmb{D}^{ij} \rVert_2
\]
are bounded, the right-hand side of Inequality~\eqref{eq:crieq1rk2} tends to $0$ when $N$ goes to infinity. This implies that $\sA_2$ can actually be ignored when computing~\eqref{eq:Q11Ark2}.

\subsection{Control of $\mathcal{A}_1$}\label{sec:estimateA1}

On the other hand, from $\flat(\sT)\Q - z\Q = \pId$, we deduce that
\[
\frac{\partial \Q}{\partial X_{\ell_1\dots \ell_{d-2}ji}} = - \Q \frac{\partial \flat(\sT)}{\partial X_{\ell_1\dots \ell_{d-2}ji}} \Q \enspace.
\]
In particular, using the notations of \eqref{eq:notationsA} and considering the first block of $\frac{\partial \Q}{\partial X_{\ell_1\dots \ell_{d-2}ji}}$,
\begin{equation}\label{eq:parQ}
\frac{\partial \Q^{11}}{\partial X_{\ell_1\dots \ell_{d-2}ji}} = - \sum_{s,t=1}^r \hat{W}_{st}\Q_{1s}\frac{\partial \A^{tt}}{\partial X_{\ell_1\dots \ell_{d-2}ji}}\Q_{t1}.
\end{equation}

Therefore we have
\begin{equation}\label{eq:parQ11ij}
\frac{\partial  Q^{11}_{ij}}{\partial X_{\ell_1\dots \ell_{d-2}ji}} = - \sum_{s,t=1}^r\sum_{k,l=1}^N  \hat{W}_{st}Q^{1s}_{ik}\frac{\partial A_{kl}^{tt}}{\partial X_{\ell_1\dots \ell_{d-2}ji}}Q^{t1}_{lj}.
\end{equation}
Due to the symmetry of $\sT$, the entry $\frac{\partial A_{kl}^{tt}}{\partial X_{\ell_1\dots \ell_{d-2}ji}}$ is nonzero if and only if $\{k, l\} \subseteq \{\ell_1, \dots, \ell_{d-2}, j, i\}$ as a submultiset (which means that we allow repetitions of elements). More precisely, let
\[
I = \{\ell_1, \dots, \ell_{d-2}, j, i\} \setminus \{k, l\} = \{i_1, \dots, i_{d-2}\}, \quad \text{and} \quad \hat{v}^1_I = \hat{v}^t_{i_1} \cdots \hat{v}^t_{i_{d-2}} \enspace.
\]
Assume that $I$ has the form $\{\xi_1, \dots, \xi_1, \dots, \xi_p, \dots, \xi_p\}$, where the multiplicity of each $\xi_h$ is $m_h$. Then
\[
\frac{\partial A_{kl}^{tt}}{\partial X_{\ell_1\dots\ell_{d-2}ji}} = \frac{1}{\sqrt{N}} \binom{d-2}{m_1, \dots, m_p}\hat{v}^t_I.
\]
We define
\[
\Omega_{I, k, l} = \sigma^2_{\ell_1,\dots,\ell_{d-2}ji}\, \frac{\partial A_{kl}^{tt}}{\partial X_{\ell_1\dots\ell_{d-2}ji}} \enspace.
\]
It is evident that $\Omega_{I, k, l}\le \frac{1}{\sqrt{N}}\hat{v}^t_I$. In particular, when $\{k, l\} \cap \{i_1, \dots, i_{d-2}\} = \emptyset$, we have
\begin{equation}\label{eq:Omega}
\Omega_{I, k, l} = \frac{1}{\sqrt{N}}\frac{1}{d(d-1)}\hat{v}^t_I.
\end{equation}
Since $\lVert \hat\v_t\rVert_2 = 1$, the cardinality of $\{i\in[N] \mid \hat{v}_i^t = O(1) \text{ as } N \to\infty\}$ is finite. In addition, for fixed indices $i_1,\dots, i_{d-2}$, the cardinality of 
\[
\Psi_I = \left\{k,l\in [N]\times [N] \,\middle\vert\, \Omega_{I,k,l}\ne \frac{1}{\sqrt{N}}\frac{1}{d(d-1)}\hat{v}^t_I \right\}
\]
is finite as $N \to \infty$. We first consider the first term ($t=1$) in $\mathcal{A}_1$ using \eqref{eq:parQ11ij}
\begin{equation*}\label{eq:11A11}
\frac{1}{N\sqrt{N}}\sum_{i,j,\ell_1,\dots,\ell_{d-2} = 1}^N \E\bigg[\hat{v}^t_{\ell_1}\cdots \hat{v}^t_{\ell_{d-2}}\sigma^2_{\ell_1\dots\ell_{d-2}ji} \sum_{s=1}^r \hat{W}_{s1}Q^{1s}_{ik}\frac{\partial A_{kl}^{11}}{\partial X_{\ell_1\dots \ell_{d-2}ji}}Q^{11}_{lj}\bigg] \enspace. \tag{*}
\end{equation*}

\begin{itemize}
\item When $k = j$ and $l = i$,
\[
\begin{split}
\eqref{eq:11A11} &\le \frac{1}{N^2}\, \sum_{\ell_1, \dots, \ell_{d-2}}\E\bigl[ (\hat{v}^1_{\ell_1}\cdots \hat{v}^1_{\ell_{d-2}})^2\, \sum_{i,j}\sum_{s=1}^r  \hat{W}_{s1}Q^{1s}_{ij} Q^{11}_{ij} \bigr] \\
&= \frac{1}{N^2}\, \E\bigl[ \Vert \hat\v_1^{\otimes (d-2)} \Vert^2_{\nF} \, \sum_{i,j} \sum_{s=1}^r \hat{W}_{s1} Q^{1s}_{ij} Q^{11}_{ij} \bigr] \\
&\le \frac{1}{N^2}\, \E \bigg[\left\Vert\sum_{s=1}^r \hat{W}_{s1} \Q^{1s} \right\Vert_{\nF} \, \Vert \Q^{11}\Vert_{\operatorname{F}}\bigg] \\
&\le \frac{r}{N}\, \E \bigg[\Vert \Q\Vert^2_2 \bigg] \xrightarrow{N \to \infty} 0 \enspace,
\end{split}
\]
since the spectral norm of $\Q$ is bounded and $\lVert\pmb{\hat W}\rVert_{\infty} \le 1$.
\item When $k = i$ and $l = j$, we first claim that
\[
\frac{1}{N\sqrt{N}}\sum_I \sum_{\{k, l\}\cap I \ne \emptyset} \hat{v}^1_I\, \left\lvert\Omega_{I,k,l} - \frac{1}{\sqrt{N}}\frac{1}{d(d-1)} \hat{v}^1_I \right\vert\, \E\bigl[\sum_{s=1}^r \hat{W}_{s1} Q^{1s}_{kk} Q^{11}_{ll}\bigr] \xrightarrow{N \to \infty} 0 \enspace.
\]
The proof of this claim splits into two cases, i.e., only one of $k$ and $l$ is contained in $I$, or both of them are in $I$. Since the spirits of the proofs are the same, here we only present the argument for the case $k\in I$ and $l\notin I$. In this case, since
\[
\,\left\vert\Omega_{I,k,l} - \frac{1}{\sqrt{N}}\frac{1}{d(d-1)} \hat{v}^1_I\right\vert\, \le \frac{1}{\sqrt{N}} \lvert\hat{v}^1_I \rvert \enspace,
\]
we have
\[
\begin{split}
&\frac{1}{N\sqrt{N}}\sum_I \sum_{\{k, l\}\cap I \ne \emptyset} \lvert\hat{v}^1_I\rvert \, \left\lvert\Omega_{I,k,l} - \frac{1}{\sqrt{N}}\frac{1}{d(d-1)} \hat{v}^1_I \right\vert\, \E\bigl[\sum_{s=1}^r \hat{W}_{s1} Q^{1s}_{kk} Q^{11}_{ll}\bigr] \\
\le &\frac{1}{N^2}\sum_{J \subseteq ([N]\setminus \{k\})^{d-1}} (\hat{v}^1_{J})^2\sum_{k\in [N]} \E\bigl[\sum_{s=1}^r \hat{W}_{s1}\, Q^{1s}_{kk}\, \lvert\hat{v}^1_k\rvert^{m_{k,I}+1}\bigr] \E\bigl[\sum_l Q^{11}_{ll}\bigr] \\
\le & \bigg[\E\bigl(\frac{\Tr \Q^{11}}{N}\bigr)\bigg] \frac{\E\lVert \sum_{s=1}^r \hat{W}_{s1}\Q^{1s}\rVert_2}{N} \xrightarrow{N \to \infty} 0
\end{split}
\]
because the spectral norm $\lVert\Q\rVert_2$ is upper bounded, where $m_{k,I}$ is the multiplicity of $k$ in $I$. This claim implies that when considering the limiting behavior of~\eqref{eq:11A11} we are safe to use~\eqref{eq:Omega} for every tuple $I, k, l$. Thus
\[
\begin{split}
\eqref{eq:11A11} &\xrightarrow{N \to \infty} \frac{1}{d(d-1)}\frac{1}{N^2} \, \sum_{\ell_1, \dots, \ell_{d-2}} (\hat{v}^1_{\ell_1}\cdots \hat{v}^1_{\ell_{d-2}})^2\, \E\bigg[\sum_i \sum_{s=1}^r \hat{W}_{s1} Q^{1s}_{ii}\bigg] \E\bigl[\sum_j Q^{11}_{jj}\bigr] \\
&= \frac{1}{d(d-1)}\frac{1}{N^2} \, \E\bigg[\sum_{s=1}^r \hat{W}_{s1}\bigl(\Tr \Q^{1s}\bigr) \bigg] \, \bigg[\E\bigl(\Tr \Q^{11}\bigr)\bigg] \\
&\xrightarrow{N \to \infty} \frac{1}{d(d-1)} \, \sum_{s=1}^r W_{s1} g_{1s}(z) g_{11}(z)
\end{split}
\]
\item When $k = i$ and $l\in \{\ell_1, \dots, \ell_{d-2}\}$,
\[
\begin{split}
\eqref{eq:11A11} &\le \frac{1}{N^2} \,  \sum_i \E\bigl[\sum_{s=1}^r \hat{W}_{s1} Q^{1s}_{ii}\bigr] \, \sum_{\ell_1, \dots, \ell_{d-2}} \, \sum_{l\in \{\ell_1, \dots, \ell_{d-2}\}} \E\bigl[ \hat{v}^1_l (\hat{v}^1_{\{\ell_1, \dots, \ell_{d-2}\}\setminus\{l\}})^2 \, \sum_j Q^{11}_{lj} \hat{v}^1_{j} \bigr]\\
&= (d-2)\, \frac{\E\bigl(\hat\v_1^{\top} \Q^{11}\hat\v_1\bigr)}{N}\, \sum_{s=1}^r \frac{\E\bigl(\hat{W}_{s1}\Tr \Q^{1s}\bigr)}{N} \\
&\le (d-2)\, \frac{\E\Vert \Q^{11}\Vert_2}{N} \, \sum_{s=1}^r \frac{\E\bigl(\hat{W}_{s1}\Tr \Q^{1s}\bigr)}{N} \xrightarrow{N \to \infty} 0
\end{split}
\]
\item When $k = j$ and $l\in \{\ell_1, \dots, \ell_{d-2}\}$,
\[
\begin{split}
\eqref{eq:11A11} &\le \frac{1}{N^2} \, \sum_j \sum_{\ell_1, \dots, \ell_{d-2}} \sum_{l\in \{\ell_1, \dots, \ell_{d-2}\}} \E\bigg[ (\hat{v}^1_{\{\ell_1, \dots, \ell_{d-2}\}\setminus\{l\}})^2 \,  \sum_i \sum_{s=1}^r \hat{W}_{s1} Q^{1s}_{ij}\hat{v}^1_i \bigg] \, \E\bigl[Q^{11}_{lj}\hat{v}^1_{l} \bigr]\\
&= (d-2)\, \bigg[\frac{\E\bigl( \sum_{s=1}^r \hat{W}_{s1} \hat\v_1^{\top}\Q^{11} \mathbf{1}\bigr)}{N}\bigg] \, \bigg[\frac{\E\bigl(\hat\v_1^{\top} \Q^{11} \mathbf{1}\bigr)}{N}\bigg]\\
&\le (d-2)\, \bigg[\sum_{s=1}^r \frac{\E  \max_s |\hat{W}_{s1}| \Vert\Q^{1s}\Vert_2}{\sqrt{N}}\bigg] \, \bigg[\frac{\E\Vert \Q^{11}\Vert_2}{\sqrt{N}}\bigg] \xrightarrow{N \to \infty} 0 \enspace,
\end{split}
\]
where $\mathbf{1}$ denotes the vector whose entries are all $1$.
\item When $k\in \{\ell_1, \dots, \ell_{d-2}\}$ and $l\in \{i,j\}$, similarly $\eqref{eq:11A11} \xrightarrow{N \to \infty} 0$.
\item When $\{k, l\} \in \{\ell_1, \dots, \ell_{d-2}\}$,
\[
\begin{split}
\eqref{eq:11A11} &\le \frac{1}{N^2} \, \sum_{i, j} \, \sum_{\ell_1, \dots, \ell_{d-2}}\, \sum_{k, l \in\{\ell_1, \dots, \ell_{d-2}\}} \E\bigg[ (\hat{v}^1_{\{\ell_1, \dots, \ell_{d-2}\}\setminus\{k, l\}})^2 \, \hat{v}^1_i \sum_{s=1}^r \hat{W}_{1s} Q^{1s}_{ik}  \hat{v}^1_k\bigg] \E\bigl[\hat{v}^1_l Q^{11}_{lj} \hat{v}^1_j \bigr] \\
&= \binom{d-2}{2} \, \bigg[\frac{\E\bigl(\sum_{s=1}^r \hat{W}_{s1}\hat\v_1^{\top}  \Q^{1s} \hat\v_1\bigr)}{N}\bigg] \, \bigg[\frac{\E\bigl(\hat\v_1^{\top} \Q^{11}\hat\v_1\bigr)}{N}\bigg] \\
&\le \binom{d-2}{2}\, \bigg[\sum_{s=1}^r \max_s |\hat{W}_{s1}| \frac{\E \Vert \Q^{1s}\Vert_2}{N}\bigg] \, \bigg[\frac{\E\Vert \Q^{11}\Vert_2}{N}\bigg] \xrightarrow{N \to \infty} 0
\end{split}
\]
\end{itemize}
Thus we have
\begin{multline}\label{eq:par1Q11}
 \frac{1}{N\sqrt{N}} \E\bigg[ \sum_{i,j,\ell_1,\dots,\ell_{d-2}} \hat{v}^1_{\ell_1} \cdots \hat{v}^1_{\ell_{d-2}}\sigma^2_{\ell_1\dots\ell_{d-2}ji} \sum_{s=1}^r \hat{W}_{s1}\Q^{1s} \frac{\partial \A^{11}}{\partial X_{\ell_1\dots \ell_{d-2}ji}}\Q^{11} \bigg]_{ij} \\
\xrightarrow{N \to \infty}  \frac{1}{d(d-1)} \, \sum_{s=1}^r W_{s1} g_{1s}(z) g_{11}(z)
\end{multline}
We proceed similarly for any $1\leq t\leq r$, which leads to
\begin{equation}\label{eq:par2Q11}
\mathcal{A}_1 \xrightarrow{N \to \infty} - \frac{1}{d(d-1)} \, \sum_{s,t=1}^r W_{st} g_{1s}(z) g_{t1}(z).
\end{equation}

\subsection{Expression of $\pmb{G}(z)$}

Combining the convergence of $\mathcal{A}_1$ and $\mathcal{A}_2$ gives us
\begin{equation}\label{eq:pardQ11}
\frac{1}{N} \E\bigl[\sum_{i,j=1}^N Q^{11}_{ij}A_{ji}^{11}\bigr] \xrightarrow{N \to \infty}- \frac{1}{d(d-1)} \sum_{s,t=1}^r W_{st} g_{1s}(z) g_{t1}(z)=\pmb{e}_1^{\top}\pmb{G}(z)\pmb{W}_r \,\pmb{G}(z)\pmb{e}_1
\end{equation}
with $\pmb{e}_1$ the vector whose entries are all zero except the first coordinate that equals $1$ and
\[
\pmb{G}(z) = \begin{bmatrix} g_{11}(z) & \dots & g_{1r}(z) \\ \vdots & \ddots & \vdots\\ g_{r1}(z) & \dots & g_{rr}(z) \end{bmatrix}.
\]
Similarly, we get
\begin{equation}
\frac{1}{N} \E\bigl[\sum_{i,j=1}^N Q^{su}_{ij}A_{ji}^{tt}\bigr] \xrightarrow{N \to \infty} -\frac{1}{d(d-1)} \pmb{e}_s^{\top}\pmb{G}(z)\pmb{W}_r \,\pmb{G}(z)\pmb{e}_u.
\end{equation}
Therefore the equality in Equation~\eqref{eq:allcrieqs_gal} when $s\neq t$ converges to
\begin{equation}\label{eq:conveq1}
\left\{\begin{array}{ll}
\frac{1}{d(d-1)}\sum_{u=1}^r W_{ut}\pmb{e}_s^{\top}\pmb{G}(z)\pmb{W}_r\,\pmb{G}(z)\pmb{e}_u - z g_{st}(z) = 1 & \text{if $s= t$}\\
\frac{1}{d(d-1)}\sum_{u=1}^r W_{ut}\pmb{e}_s^{\top}\pmb{G}(z)\pmb{W}_r\,\pmb{G}(z)\pmb{e}_u - z g_{st}(z) = 0 & \text{if $s\neq t$}
\end{array}\right..
\end{equation}
We can combine all equations in \eqref{eq:conveq1} in a matrix form and obtain
\begin{equation}\label{eq:fixedpoint_gal}
\frac{1}{d(d-1)} \pmb{G}(z)\pmb{W}_r\,\pmb{G}(z)\pmb{W}_r + z \pmb{G}(z) + \pId = \mathbf{0}.
\end{equation}
Finally we notice the eigendecomposition 
\[
\frac{2}{\sqrt{d(d-1)}}\pmb{W}_r = \pmb{P}_r\, \begin{bmatrix}
    \kappa_1^{(r)}  & 0  & 0 \\
     0 & \ddots & 0 \\
    0 & 0   & \kappa_r^{(r)}\end{bmatrix}\pmb{P}_r^{-1}.
\]
Since \eqref{eq:fixedpoint_gal} can be reformulated as
\[
\frac{1}{d(d-1)} \pmb{G}(z)\pmb{W}_r + \Big(\pmb{G}(z)\pmb{W}_r\Big)^{-1} = - z \pmb{W}_r,
\]
$\pmb{P}_r$ also diagonalizes $\pmb{G}(z)$ except for a finite number of $z$. It remains then to compute the eigenvalues of $\pmb{G}(z)$ which we denote by $\zeta_1(z), \dots, \zeta_2(z)$. Then~\eqref{eq:fixedpoint_gal} becomes
\begin{equation}\label{eq:zeta}
 \frac{1}{4} \begin{bmatrix} (\kappa_1^{(r)})^2\zeta^2_1(z) & \dots & 0 \\ 0  &\ddots & 0 \\ 0 & 0 & (\kappa_1^{(r)})^2\zeta^2_r(z) \end{bmatrix} +  \begin{bmatrix} z\zeta_1(z) & 0 & 0 \\ 0 & \ddots & 0\\ 0 & 0 & z\zeta_r(z) \end{bmatrix} +  \pId =\pmb{0} \enspace.
\end{equation}
Because the Stieltjes transform behaves as $O(z^{-1})$ at infinity, Equation~\eqref{eq:zeta} has a unique solution which gives rise to the Stieltjes transform of some measure.
Distinguishing between the case $\kappa = 0$ and the case $\kappa \neq 0$ provides the result.

\subsection{Resolvent study: concentration property}\label{subsub:concentration}

To show the almost surely convergence, notice that
\[
\lVert \bigl(\Q^{ij}(z)\bigr)^k \rVert_2 \le \lVert \Q^{ij}(z) \rVert^k_2 \le \frac{1}{\lvert\mathfrak{I}(z) \rvert^k} \quad \text{and} \quad \lVert \bigl(\Q(z)\bigr)^k \rVert_2 \le \frac{1}{\lvert\mathfrak{I}(z) \rvert^k}
\]
for all $z\in \mathbb{C}\setminus \mathbb{R}$ and positive integer $k$, which implies that all moments
\[
\frac{1}{N} \operatorname{Tr} \bigl(\Q^{ij}(z)\bigr)^k \quad \text{and} \quad \frac{1}{rN} \operatorname{Tr} \bigl(\Q(z)\bigr)^k
\]
are bounded on $\mathbb{C}\setminus \mathbb{R}$, and thus there exist convergent subsequences of
\[
\bigg(\frac{1}{N} \mathbb{E}[\operatorname{Tr}\Q^{ij}(z)] \bigg)_{N\ge 1} \quad \text{and} \quad \bigg(\frac{1}{rN} \mathbb{E}[\operatorname{Tr}\Q(z)] \bigg)_{N\ge 1}.
\]
In fact, all convergent subsequences must converge to a same limit, i.e.,
\[
\lim_{N\to \infty}\frac{1}{N} \mathbb{E}[\operatorname{Tr}\Q^{ij}(z)] = g_{ij}(z) \quad \text{and} \quad \lim_{N\to\infty}\frac{1}{rN} \mathbb{E}[\operatorname{Tr}\Q(z)] = g(z) = \frac1r \sum_{s=1}^r g_{ss}(z)
\]
for some $g_{ij}(z)$, because all these limits need to satisfy Equation~\eqref{eq:fixedpoint_gal} which has a unique solution that can give rise to a Stieltjes transform. Now we would like to show that $\frac{1}{rN} \operatorname{Tr} \bigl(\Q(z)\bigr)$ actually concentrates around $g(z)$ by controlling the variances. Indeed, by Lemma~\ref{lem:Poincare},
\begin{equation}\label{eq:vartrace}
\operatorname{\mathbb{V}ar} \bigg(\frac{1}{rN} \operatorname{Tr} \bigl(\Q(z)\bigr)\bigg) \le \frac{1}{N^2} \mathbb{E} \bigg[\sum_{\bm{i}} \sigma^2_{\bm{i}} \Big(\sum_j\sum_s \frac{\partial Q_{jj}^{ss}(z)}{\partial X_{\bm{i}}} \Big)^2 \bigg]\enspace.
\end{equation}
For simplicity of notation, we will use $\Q$ instead of $\Q(z)$ in the remaining of this paper. By Equation~\eqref{eq:parQ11ij} and Equation~\eqref{eq:Omega}, we have
\[
\begin{split}
\sum_{\bm{i}}\sum_j \bigg[\frac{\partial Q_{jj}^{11}}{\partial X_{\bm{i}}} \bigg]^2 \le & \frac{1}{N} \bigl((d-2)!\bigr)^2 \sum_{j, k, l}\sum_t \bigg[(\sum_s\hat{W}_{st} Q^{1s}_{jk})^2(Q^{t1}_{lj})^2 \bigg] \\
= &\frac{1}{N} \bigl((d-2)!\bigr)^2 \bigg[\sum_t\lVert  \sum_s \hat{W}_{st}\Q^{t1}\Q^{1s}\rVert_{\operatorname{F}}^2 \bigg] \\
\le & \bigl((d-2)!\bigr)^2 \bigg[\sum_t\Vert \Q^{t1}\Vert_2^2   \sum_s |\hat{W}_{st}|\lVert\Q^{1s}\rVert_2^2 \bigg],
\end{split}
\]
which implies that $\sum_{\bm{i}}\sum_j \bigg[\frac{\partial Q_{jj}^{11}}{\partial X_{\bm{i}}} \bigg]^2$ is $\mathcal{O}(1)$. Similarly, we can show that $\sum_{\bm{i}}\sum_j \bigg[\frac{\partial Q_{jj}^{ss}}{\partial X_{\bm{i}}} \bigg]^2$ is $\mathcal{O}(1)$. Hence $\operatorname{\mathbb{V}ar} \bigg(\frac{1}{rN} \operatorname{Tr} \bigl(\Q(z)\bigr)\bigg)$ is of $\mathcal{O}(\frac{1}{N^2})$. By Chebyshev's inequality and Borel-Cantelli lemma, $\frac{1}{N} \operatorname{Tr} \Q^{ij}(z)$ and $\frac{1}{rN} \operatorname{Tr} \Q(z)$ converge to $g_{ij}(z)$ and $g(z)$ almost surely respectively.



\subsection{Rank-$r$ case}


In the case of $\flat(\sT)$, it suffices to show that the limiting Stieltjes transforms of $\mu_{\flat(\sT)}$ and $\mu_{\flat(\frac{1}{\sqrt{N}}\sX)}$ are equal, i.e.,
\[
\lim_{N\to \infty} S_{\mu_{\flat(\sT)}} (z) =\lim_{N\to \infty} S_{\mu_{\flat(\frac{1}{\sqrt{N}}\sX)}} (z) \enspace.
\]
To this end, let
\[
\Q(z) = \bigl(\frac{1}{\sqrt{N}} \flat(\sX) - z\pId \bigr)^{-1} \text{ and } \pmb{R}(z) = \bigl(\flat(\sT) - z\pId \bigr)^{-1}.
\]
Then by Woodbury matrix identity we have
\[
\begin{split}
\frac{1}{rN}\Tr \pmb{R}(z) &= \frac{1}{rN}\Tr \bigg[\Q^{-1}(z) + \flat(\sum_{j=1}^r \beta_j \u_j^{\otimes d}) \bigg]^{-1} \\
&= \frac{1}{rN}\Tr \Q(z) - \frac{1}{rN}\Tr \bigg[\Q(z) (\sum_{i=1}^r \pmb{U}_i \A_i \pmb{U}_i^{\top}) \bigl(\Q^{-1}(z) + \sum_{i=1}^r \pmb{U}_i \A_i \pmb{U}_i^{\top} \bigr)^{-1}\bigg] \\
&= \frac{1}{rN}\Tr \Q(z) - \sum_{i=1}^r \frac{1}{rN}\Tr \bigg[\A_i \pmb{U}_i^{\top} \pmb{R}(z) \Q(z) \pmb{U}_i\bigg],
\end{split}
\]
where we denote
\[
\pmb{U}_i = \begin{bmatrix}
\u_i & 0 & 0 \\
0 & \ddots & 0\\
0 & 0 & \u_i
\end{bmatrix} \text{\quad and \quad} \A_i = \beta_i \pmb{\hat W}_r \begin{bmatrix}
\langle \u_i, \hat\v_1 \rangle^{d-2} & 0 & 0 \\
0 & \ddots & 0\\
0 & 0 & \langle \u_i, \hat\v_r\rangle^{d-2}
\end{bmatrix}.
\]
Notice that each matrix
\[
\A_i \pmb{U}_i^{\top} \pmb{R}(z) \Q(z) \pmb{U}_i
\]
is a $r\times r$ matrix, and has bounded spectral norm thanks to Assumption~\ref{assump:general} and the fact that $\Q(z)$ and $\pmb{R}(z)$ are of bounded spectral norms. This implies that
\[
\frac{1}{rN}\Tr \bigg[\A_i \pmb{U}_i^{\top} \pmb{R}(z) \Q(z) \pmb{U}_i\bigg] \xrightarrow{N \to \infty} 0
\]
for all $i \in [r]$. Thus
\[
\lim_{N\to \infty} \frac{1}{rN} \Tr \pmb{R}(z) = \lim_{N\to \infty} \frac{1}{rN} \Tr \Q(z).
\]


\section{Proof of Theorem~\ref{th:explicitmu_noise}}
\label{subsec:proofexplicitmu}
From Theorem~\ref{thm:pure_noise_Stieltjes}, the limit of the Stieltjes transform $S_{\mu_{\flat(\sT)}} (z) $ of the empirical spectral measure of $\pmb{\flat}(\sT)$ is equal to 
\begin{align}
\frac1r\sum_{i=1}^r g_{ii}(z) = \frac1r\operatorname{Tr}\pmb{G}(z) = \frac2r \sum_{i=1}^r \frac{-z+\sqrt{z^2-\kappa_i^2}}{\kappa_i^2},
\end{align}
when $\kappa_i\neq 0$. By the inverse formula of each term of the summand, we have
\begin{equation}
\mu_{\kappa_i}(dx) = \frac{2}{\pi\kappa_i^2}\sqrt{\bigg(\kappa_i^2 - x^2\bigg)_+}dx .
\end{equation}
We compute similarly the inverse Stieltjes transform in the case where $\kappa_i = 0$.


\section{Proof of Theorem~\ref{thm:mostgeneral}}\label{subsec:proofmeasure_orth}

We will follow the same lines of proof as in Section~\ref{sec:prooftotal}.
For the sake of clarity of the exposition, we provide details in the case $s=1$, focusing only on $\hat\v_1$ and $\hat\gamma_1$.
Therefore, we consider the following two fixed point equations that the critical points need to satisfy
\begin{equation}\label{eq:cripteq_1}
\begin{dcases}
\langle \sT, \hat\v_1^{\otimes (d-1)}\rangle = \hat\gamma_1 \hat\v_1\\
\langle \hat\v_1, \hat\v_1 \rangle = 1
\end{dcases}
\end{equation}
We can do this choice because we assumed that $\hat\v_1$ is orthogonal to $\hat\v_2,\dots,\hat\v_r$, hence we can separate the fixed point equations in equations involving $\hat\v_1$ (see \eqref{eq:cripteq_1}) and the equations involving $\hat\v_2,\dots,\hat\v_r$.
Denote now by $X_{\alpha} = X_{i_1\dots i_d}$, and let
\[
\sF_1(\hat\v_1, \hat\gamma_1, \hat\v_2, \hat\gamma_2, \dots, \hat\v_r, \hat\gamma_r) = \begin{bmatrix}
\langle \sT, \hat\v_1^{\otimes (d-1)}\rangle - \hat\gamma_1 \hat\v_1\\
\langle \hat\v_1, \hat\v_1 \rangle - 1
\end{bmatrix}.
\]
Then the Jacobian of $\sF_1$ with respect to $(\hat\v_1,  \hat\gamma_1)$ is given by
\[
\scalemath{0.93}{\pmb{J}_1 = \begin{bmatrix}
(d-1)\langle \sT, \hat\v_1^{\otimes (d-2)}\rangle - \hat\gamma_1\pId & -\hat\v_1 \\
2\hat\v_1^{\top} & 0 
\end{bmatrix}}.
\]

Therefore, the Jacobian $\pmb{J}_1$ is invertible since $\frac{\hat\gamma_1}{d-1}$ is assumed to be large enough. The implicit function theorem then provides
\begin{equation}\label{eq:implicit_1}
\left(\begin{matrix}
\frac{\partial \hat\v^{\top}_1}{\partial X_{\alpha}} &
\frac{\partial \hat\gamma_1}{\partial X_{\alpha}}
\end{matrix}
\right)^{\top} = - \pmb{J}_1^{-1}\, \frac{\partial \sF_1}{\partial X_{\alpha}}.
\end{equation}
Noticing
\begin{equation}\label{eq:C_1}
\pmb{C}_1 = (d-1)\langle \sT, \hat\v_1^{\otimes (d-2)}\rangle - \hat\gamma_1\pId,
\end{equation}
we can explicitly write out the inverse of $\pmb{J}_1$
\begin{equation}
\pmb{J}_1^{-1} = \begin{bmatrix}
\pmb{C}_1^{-1} + \frac{\pmb{C}_1^{-1}\hat\v_1\hat\v_1^{\top}\pmb{C}_1^{-1}}{\hat\v_1^{\top}\pmb{C}_1^{-1}\hat\v_1} & \frac{\pmb{C}_1^{-1}\hat\v_1}{2\hat\v_1^{\top}\pmb{C}_1^{-1}\hat\v_1} \\
-\frac{\hat\v_1^{\top}\pmb{C}_1^{-1}}{\hat\v_1^{\top}\pmb{C}_1^{-1}\hat\v_1} & \frac{1}{2\hat\v_1^{\top}\pmb{C}_1^{-1}\hat\v_1} 
\end{bmatrix}.
\end{equation}
In particular, from \eqref{eq:implicit_1}, we get
\begin{equation}\label{eq:partialv_1}
\frac{\partial \hat\v_1}{\partial X_{\alpha}} = -\left(\pmb{C}_1^{-1} + \frac{\pmb{C}_1^{-1}\hat\v_1\hat\v_1^{\top}\pmb{C}_1^{-1}}{\hat\v_1^{\top}\pmb{C}_1^{-1}\hat\v_1}\right)\frac{\partial \langle \sT,\hat\v_1^{\otimes(d-1)}\rangle_j }{\partial X_{\alpha}}.
\end{equation}

Now recall that $\A = \langle \sT, \hat\v_1^{\otimes (d-2)} \rangle$ and $\pmb{Q}_1 = (\A-z\pId)^{-1}$. Thus, 
\begin{equation}\label{eq:small_trace}
\frac1N \Tr(\pmb{Q}_1\A) - \frac{z}{N} \Tr(\pmb{Q}_1) = 1.
\end{equation}
Let us denote $g_{1}(z)=\frac1N \Tr(\pmb{Q}_1)$. We are now interested in
\begin{equation}
\begin{split}
&\frac{1}{N} \E\bigl[\sum_{i,j=1}^N Q^{1}_{ij}A_{ji}\bigr] \\
= &\frac{1}{N\sqrt{N}} \sum_{i, j, \ell_1, \dots, \ell_{d-2}=1}^N \E\bigl[Q^{1}_{ij} X_{\ell_1\dots \ell_{d-2}ji} \hat{v}^1_{\ell_1}\cdots \hat{v}^1_{\ell_{d-2}}\bigr] \\
= &\!\begin{multlined}[t]
\underbrace{\frac{1}{N\sqrt{N}} \sum_{i, j, \ell_1, \dots, \ell_{d-2}=1}^N \sigma^2_{\ell_1\dots\ell_{d-2}ji}\E \bigl[\hat{v}^1_{\ell_1}\cdots \hat{v}^1_{\ell_{d-2}} \frac{\partial Q^{1}_{ij}}{\partial X_{\ell_1\dots \ell_{d-2}ji}}\bigr]}_{\sA_1} \\
+ \underbrace{\frac{1}{N\sqrt{N}} \sum_{i, j, \ell_1, \dots, \ell_{d-2}=1}^N \sum_{k=1}^{d-2} \sigma^2_{\ell_1\dots\ell_{d-2}ji}\E \bigl[\hat{v}^1_{\ell_1}\cdots \hat{v}^1_{\ell_{k-1}} \frac{\partial \hat{v}^{1}_{\ell_k}}{\partial X_{\ell_1\dots \ell_{d-2}ji}} \hat{v}^1_{\ell_{k+1}}\cdots \hat{v}^1_{\ell_{d-2}}\, Q^{1}_{ij}\bigr]}_{\sA_2}.
\end{multlined}
\end{split}
\end{equation}
Following the same lines of proof in Section~\ref{sec:prooftotal}, we can prove that $\sA_2$ converges to $0$ using the fact that the spectral norm of $\pmb{C}_1^{-1} + \frac{\pmb{C}_1^{-1}\hat\v_1\hat\v_1^{\top}\pmb{C}_1^{-1}}{\hat\v_1^{\top}\pmb{C}_1^{-1}\hat\v_1}$ in \eqref{eq:partialv_1} is bounded. Following again Section~\ref{sec:prooftotal}, we can prove that 
\begin{equation}
\sA_1\xrightarrow{N \to \infty} - \frac{g_1(z)^2}{d(d-1)}.
\end{equation}
Therefore considering the limit when $N$ tends to infinity in Equation~\eqref{eq:small_trace} yields
\begin{equation}
- \frac{g_1(z)^2}{d(d-1)} - zg_{1}(z) = 1.
\end{equation}
Since $g_1(z)$ is a Stieltjes transform behaving as $O(z^{-1})$ for large $z$, we can solve this polynomial equation
\begin{equation}
g_1(z) = \frac{d(d-1)}{2}\left(-z+\sqrt{z^2-\frac{4}{d(d-1)}}\right).
\end{equation}
Considering the inverse transform provides the result.



\section{Proof of Theorem~\ref{thm:generalized_align}}\label{sec:generalized_align}
Consider a sequence of critical points, i.e., satisfying \eqref{eq:rkrapprox}. Thus, thanks to Assumption~\ref{assump:general0}, we have
\begin{equation}\label{eq:u1v1}
\langle \sT, \hat\v_1^{\otimes (d-1)} \u_1\rangle = \sum_{t=1}^s \hat\gamma_t \langle \hat\v_t, \u_1 \rangle \langle \hat\v_t, \v_1 \rangle^{d-1} .
\end{equation}

Since
\begin{equation}\label{eq:alignTu1}
\langle \sT, \hat\v_1^{\otimes (d-1)} \u_1\rangle = \sum_{t=1}^r \beta_t \langle \u_1, \u_t\rangle \langle \u_t, \hat\v_1\rangle^{d-1} + \frac{1}{\sqrt{N}}\langle \mathcal{X}, \hat\v_1^{\otimes (d-1)} \u_1\rangle \enspace,
\end{equation}
to characterize the alignment $\langle \hat\v_1, \u_1 \rangle$ we need to evaluate $\langle \sX, \hat\v_1^{\otimes (d-1)} \u_1\rangle$. By~\eqref{eq:partialvD} and~\eqref{eq:Ddef},
\begin{equation}\label{eq:alignu1}
\begin{split}
\frac{1}{\sqrt{N}}\E\bigg[\langle \sX, \hat\v_1^{\otimes (d-1)} \u_1\rangle\bigg] 
&= \frac{1}{\sqrt{N}} \sum_{i_1,\dots,i_d} \E \bigg[X_{i_1\dots i_d} \hat{v}^1_{i_1} \cdots \hat{v}^1_{i_{d-1}}\bigg] u^1_{i_d} \\
&= \frac{d-1}{\sqrt{N}} \sum_{i_1,\dots,i_d} \sigma^2_{i_1\dots i_d}\E \bigg[\frac{\partial \hat{v}^1_{i_1}}{\partial X_{i_1\dots i_d}} \hat{v}^1_{i_2}\cdots \hat{v}^1_{i_{d-1}}\bigg] u^1_{i_d} \\
&= - \frac{d-1}{\sqrt{N}} \sum_{i_1, \dots, i_d, j} \sigma^2_{i_1\dots i_d} \E \bigg[\sum_{s=1}^r D^{1s}_{i_1j} \frac{\partial \langle \sT, \hat\v_s^{\otimes(d-1)}\rangle_j }{\partial X_{\alpha}} \hat{v}^1_{i_2}\cdots \hat{v}^1_{i_{d-1}}\bigg] u^1_{i_d} \enspace.
\end{split}
\end{equation}
with the notations of \eqref{eq:Ddef}. For the purpose of computing~\eqref{eq:alignu1}, we start from the term
\begin{equation}\label{eq:D11w1}
\frac{d-1}{\sqrt{N}} \sum_{i_1, \dots, i_d, j} \sigma^2_{i_1\dots i_d} \E \bigg[D^{11}_{i_1j}\frac{\partial \langle \sT, \hat\v_1^{\otimes(d-1)}\rangle_j }{\partial X_{\alpha}}\hat{v}^1_{i_2}\cdots \hat{v}^1_{i_{d-1}}\bigg]u^1_{i_d}.
\end{equation}
To this end, thanks to~\eqref{eq:defC-1}, let us decompose $\pmb{D}_{11}$ as
\begin{equation}\label{eq:C-111}
\pmb{D}_{11} = \frac{1}{d-1} \left(\Q(\frac{\hat\gamma_s}{d-1})\pmb{\hat W}\right)_{11} + \pmb{M},
\end{equation}
where $\left(\Q(\frac{\hat\gamma_s}{d-1})\pmb{\hat W}\right)_{11}$ denotes the top-left $N\times N$ block of the matrix $\Q(\frac{\hat\gamma_s}{d-1})\pmb{\hat W}$ and  $\pmb{M}$ is a finite rank matrix of bounded spectral norm similarly as in Section~\ref{sec:A1}.
Therefore
\[
\begin{split}
\lim_{N\to \infty}\frac{\Tr(\pmb{D}_{11})}{N} &= \lim_{N\to\infty} \frac{1}{N(d-1)}\Tr\left(\left(\Q(\frac{\hat\gamma_s}{d-1})\pmb{\hat W}\right)_{11}\right)\\
&= \frac{1}{d-1}\bigg(\pmb{G}(\frac{\gamma_s}{d-1})\pmb{W}\bigg)_{11}
\end{split}
\]

Next we compute $\phi_j:=\frac{\partial \langle \sT, \hat\v_1^{\otimes(d-1)}\rangle_j }{\partial X_{\alpha}}$. Assume $\{i_1, \dots, i_{d-1}\} = \{\ell_1, \dots, \ell_1, \dots, \ell_k, \dots, \ell_k\}$ as a multiset, where the multiplicity of $\ell_j$ is $m_j$. By~\eqref{eq:multTvd-1gal}, $\phi_j\neq 0$ only when $j\in\{i_1, \dots, i_d\}$. Then, its calculation is splitted into the following two cases.
\begin{itemize}
\item When $i_d \in \{\ell_1, \dots, \ell_k\}$, without loss of generality, we assume $i_d = \ell_1$. Then
\begin{equation*}
\phi_{\ell_1}= \frac{1}{\sqrt{N}}\binom{d-1}{m_1, \dots, m_k} \bigl(\hat{v}^1_{\ell_1}\bigr)^{m_1} \cdots \bigl(\hat{v}^1_{\ell_k}\bigr)^{m_k} \enspace,
\end{equation*}
and when $j \in \{2, \dots, k\}$,
\begin{multline*}
\phi_{\ell_j} = \frac{1}{\sqrt{N}}\binom{d-1}{m_1+1, m_2, \dots, m_{j-1}, m_j-1, m_{j+1}, \dots, m_k} \\
\bigl(\hat{v}^1_{\ell_1}\bigr)^{m_1+1} \bigl(\hat{v}^1_{\ell_2}\bigr)^{m_2} \cdots \bigl(\hat{v}^1_{\ell_{j-1}}\bigr)^{m_{j-1}} \bigl(\hat{v}^1_{\ell_j}\bigr)^{m_j-1} \bigl(\hat{v}^1_{\ell_{j+1}}\bigr)^{m_{j+1}} \cdots \bigl(\hat{v}^1_{\ell_k}\bigr)^{m_k} \enspace.
\end{multline*}
\item When $i_d \notin \{\ell_1, \dots, \ell_k\}$,
\begin{equation*}
\phi_{i_d}= \frac{1}{\sqrt{N}}\binom{d-1}{m_1, \dots, m_k} \bigl(\hat{v}^1_{\ell_1}\bigr)^{m_1} \cdots \bigl(\hat{v}^1_{\ell_k}\bigr)^{m_k} \enspace,
\end{equation*}
and when $j \in [k]$,
\begin{multline*}
\phi_{\ell_j} = \frac{1}{\sqrt{N}}\binom{d-1}{m_1, \dots, m_{j-1}, m_j-1, m_{j+1}, \dots, m_k, 1} \\
\bigl(\hat{v}^1_{\ell_1}\bigr)^{m_1} \cdots \bigl(\hat{v}^1_{\ell_{j-1}}\bigr)^{m_{j-1}} \bigl(\hat{v}^1_{\ell_j}\bigr)^{m_j-1} \bigl(\hat{v}^1_{\ell_{j+1}}\bigr)^{m_{j+1}} \cdots \bigl(\hat{v}^1_{\ell_k}\bigr)^{m_k} \bigl(\hat{v}^1_{i_d}\bigr) \enspace.
\end{multline*}
\end{itemize}
Consequently we split the computation of~\eqref{eq:D11w1} into the following two cases.
\begin{itemize}
\item When $i_d \in \{\ell_1, \dots, \ell_k\}$, without loss of generality, we assume $i_d = \ell_1$. Then
\begin{equation}\label{eq:D11est}
\resizebox{0.92\textwidth}{!}{%
$\begin{aligned}
&\frac{d-1}{\sqrt{N}} \sum_{i_1, \dots, i_d, j} \sigma^2_{i_1\dots i_d} \E \bigg[D^{11}_{i_1j}\phi_{j} \hat{v}^1_{i_2}\cdots \hat{v}^1_{i_{d-1}}\bigg]u^1_{i_d} \\
= &\frac{1}{\sqrt{N}} \sum_{\ell_1, \dots, \ell_k, j, q} m_j \binom{d-1}{m_1, \dots, m_k} \sigma^2_{i_1\dots i_d} \E \bigg[D^{11}_{\ell_jq}\phi_q \bigl(\hat{v}^1_{\ell_1}\bigr)^{m_1} \cdots \bigl(\hat{v}^1_{\ell_{j-1}}\bigr)^{m_{j-1}} \bigl(\hat{v}^1_{\ell_j}\bigr)^{m_j-1} \bigl(\hat{v}^1_{\ell_{j+1}}\bigr)^{m_{j+1}} \cdots \bigl(\hat{v}^1_{\ell_k}\bigr)^{m_k}\bigg]u^1_{\ell_1}.
\end{aligned}$
}
\end{equation}
For simplicity of notation, here we only focus on the following term contained in~\eqref{eq:D11est}
\begin{equation}\label{eq:D11w1aell}
\frac{1}{\sqrt{N}} \sum_{\ell_1, \dots, \ell_k, q}m_1\binom{d-1}{m_1, \dots, m_k}\sigma^2_{i_1\dots i_d} \E \bigg[D^{11}_{\ell_1q}\phi_q \bigl(\hat{v}^1_{\ell_1}\bigr)^{m_1-1} \bigl(\hat{v}^1_{\ell_{2}}\bigr)^{m_{2}} \cdots \bigl(\hat{v}^1_{\ell_k}\bigr)^{m_k}\bigg]u^1_{\ell_1} \enspace.
\end{equation}
Thanks to the expression of $\phi_q$, Equation~\eqref{eq:D11w1aell} has the form
\begin{multline}\label{eq:D11w1aellest}
\frac{1}{N} \sum_{\ell_1, \dots, \ell_k} \frac{m_1(m_1+1)}{d} u^1_{\ell_1}\E \bigg[\binom{d-1}{m_1, \dots, m_k} D^{11}_{\ell_1\ell_1} \bigl(\hat{v}^1_{\ell_1}\bigr)^{2m_1-1} \bigl(\hat{v}^1_{\ell_{2}}\bigr)^{2m_{2}} \cdots \bigl(\hat{v}^1_{\ell_k}\bigr)^{2m_k} \\
+ \cdots + \binom{d-1}{m_1+1, \dots, m_k-1} D^{11}_{\ell_1\ell_k} \bigl(\hat{v}^1_{\ell_1}\bigr)^{2m_1} \cdots \bigl(\hat{v}^1_{\ell_{k-1}}\bigr)^{2m_{k-1}} \bigl(\hat{v}^1_{\ell_k} \bigr)^{2m_k-1}\bigg] \enspace.
\end{multline}
Since $\D_{11}$ has bounded spectral norm and
\[
\sum_{\ell_1} u^1_{\ell_1} D^{11}_{\ell_1\ell_1} \hat{v}^1_{\ell_1} \le \sqrt{N}\Vert \D_{11}\Vert_2 \quad \text{and} \quad \sum_{\ell_1, \ell_k} u^1_{\ell_1} D^{11}_{\ell_1\ell_k} \hat{v}^1_{\ell_k} \le \Vert \D_{11}\Vert_2 \enspace,
\]
Equation~\eqref{eq:D11w1aellest} is bounded by
\[
\frac{(d!)^2}{N}  \bigl(\sqrt{N} + 1\bigr) \Vert \D_{11}\Vert_2 \xrightarrow{N \to \infty} 0 \enspace.
\]
Hence in a similar way we have that Equation~\eqref{eq:D11est} tends to $0$ when $N \to \infty$.
\item When $i_d \notin \{\ell_1, \dots, \ell_k\}$,
\begin{equation}\label{eq:D11idest}
\resizebox{0.91\textwidth}{!}{%
$\begin{aligned}
&\frac{d-1}{\sqrt{N}} \sum_{i_1, \dots, i_d, j}\sigma^2_{i_1\dots i_d} \E \bigg[D^{11}_{i_1j}\phi^1_j \hat{v}^1_{i_2}\cdots \hat{v}^1_{i_{d-1}}\bigg]u^1_{i_d} \\
= &\frac{1}{\sqrt{N}} \sum_{\ell_1, \dots, \ell_k, i_d, j, q}m_j\binom{d-1}{m_1, \dots, m_k}\sigma^2_{i_1\dots i_d} \E \bigg[D^{11}_{\ell_jq}\phi^1_q \bigl(\hat{v}^1_{\ell_1}\bigr)^{m_1} \cdots \bigl(\hat{v}^1_{\ell_{j-1}}\bigr)^{m_{j-1}} \bigl(\hat{v}^1_{\ell_j}\bigr)^{m_j-1} \bigl(\hat{v}^1_{\ell_{j+1}}\bigr)^{m_{j+1}} \cdots \bigl(\hat{v}^1_{\ell_k}\bigr)^{m_k}\bigg]u^1_{i_d}.
\end{aligned}$
}
\end{equation}
As before, we focus on the term
\begin{equation}\label{eq:D11w1idell}
\begin{split}
&\frac{1}{\sqrt{N}} \sum_{\ell_2, \dots, \ell_k, i_d, q}m_1\binom{d-1}{m_1, \dots, m_k}\sigma^2_{i_1\dots i_d} \E \bigg[D^{11}_{\ell_1q}\phi^1_q \bigl(\hat{v}^1_{\ell_1}\bigr)^{m_1-1} \bigl(\hat{v}^1_{\ell_{2}}\bigr)^{m_{2}} \cdots \bigl(\hat{v}^1_{\ell_k}\bigr)^{m_k}\bigg]u^1_{i_d} \\
= &\frac{1}{N} \sum_{\ell_2, \dots, \ell_k, i_d} \frac{m_1}{d} \E \bigg[\binom{d-1}{m_1-1, m_2, \dots, m_k, 1} D^{11}_{\ell_1\ell_1} \bigl(\hat{v}^1_{\ell_1}\bigr)^{2m_1-2} \bigl(\hat{v}^1_{\ell_{2}}\bigr)^{2m_{2}} \cdots \bigl(\hat{v}^1_{\ell_k}\bigr)^{2m_k} \bigl(\hat{v}^1_{i_d}\bigr) \\
+& \cdots + \binom{d-1}{m_1, \dots, m_k-1, 1} D^{11}_{\ell_1\ell_k} \bigl(\hat{v}^1_{\ell_1}\bigr)^{2m_1-1} \bigl(\hat{v}^1_{\ell_2}\bigr)^{2m_2} \cdots \bigl(\hat{v}^1_{\ell_{k-1}}\bigr)^{2m_{k-1}} \bigl(\hat{v}^1_{\ell_k} \bigr)^{2m_k-1} \bigl(\hat{v}^1_{i_d}\bigr) \\
+& \binom{d-1}{m_1, \dots, m_k} D^{11}_{\ell_1 i_d} \bigl(\hat{v}^1_{\ell_1}\bigr)^{2m_1-1} \bigl(\hat{v}^1_{\ell_2}\bigr)^{2m_2} \cdots \bigl(\hat{v}^1_{\ell_k} \bigr)^{2m_k} \bigg]u^1_{i_d} \enspace.
\end{split}
\end{equation}
Arguing as above, we see that the only term in~\eqref{eq:D11w1idell}, which can be nonzero as $N \to \infty$, is
\[
\frac{1}{N} \sum_{\ell_2, \dots, \ell_k, i_d} \frac{m_1}{d} \E \bigg[\binom{d-1}{m_1-1, m_2, \dots, m_k, 1} D^{11}_{\ell_1\ell_1} \bigl(\hat{v}^1_{\ell_1}\bigr)^{2m_1-2} \bigl(\hat{v}^1_{\ell_{2}}\bigr)^{2m_{2}} \cdots \bigl(\hat{v}^1_{\ell_k}\bigr)^{2m_k} \hat{v}^1_{i_d} \bigg]u^1_{i_d}
\]
when $m_1 = 1$, i.e.,
\[
\begin{split}
&\frac{1}{dN} \sum_{\ell_2, \dots, \ell_k, i_d} \binom{d-1}{m_2, \dots, m_k, 1} \E \bigg[D^{11}_{\ell_1\ell_1} \bigl(\hat{v}^1_{\ell_{2}}\bigr)^{2m_{2}} \cdots \bigl(\hat{v}^1_{\ell_k}\bigr)^{2m_k} \hat{v}^1_{i_d}\bigg]u^1_{i_d} \\
= &\frac{d-1}{dN} \sum_{i_2, \dots, i_d} \E \bigg[D^{11}_{\ell_1 \ell_1} \bigl(\hat{v}^1_{i_{2}}\bigr)^{2} \cdots \bigl(\hat{v}^1_{i_{d-1}}\bigr)^{2} \hat{v}^1_{i_d} \bigg]u^1_{i_d} \\
= &\frac{d-1}{dN} \E\bigg[D^{11}_{\ell_1\ell_1} \langle \u_1, \hat\v_1\rangle\bigg] \enspace.
\end{split}
\]
Thus~\eqref{eq:D11idest} converges a.s. to
\[
\frac{d-1}{d} \lim_{N\to\infty}\E\bigg[\frac{\Tr(\D_{11})}{N} \langle \u_1, \hat\v_1\rangle\bigg] = \frac{R^{uv, s}_{11}}{d}\bigg(\pmb{G}(\frac{\gamma_s}{d-1})\pmb{W}\bigg)_{11}.
\]
\end{itemize}
Similarly we can show that for any $1\leq t\leq s$,
\[
\begin{split}
&\frac{d-1}{\sqrt{N}} \sum_{i_1, \dots, i_d, j} \sigma^2_{i_1\dots i_d} \E \bigg[\bigl(D^{1t}_{i_1j}\phi^s_j \bigr)\hat{v}^1_{i_2}\cdots \hat{v}^1_{i_{d-1}}\bigg] u^1_{i_d} \\
\xrightarrow{N \to \infty} &\frac{d-1}{d} \lim_{N\to\infty}\E\bigg[\langle \hat\v_1, \hat\v_t\rangle^{d-2}\langle \u_1, \hat\v_t\rangle \frac{\Tr\bigl(\pmb{D}_{1t}\bigr)}{N}\bigg] \\
\xrightarrow{N \to \infty} &\frac{R^{uv, s}_{1t}(R^{vv, s}_{1t})^{d-2}}{d} \bigg(\pmb{G}(\frac{\gamma_t}{d-1})\pmb{W}\bigg)_{t1} \enspace.
\end{split}
\]
Hence Equation~\eqref{eq:alignTu1} gives us
\begin{equation}
\sum_{t=1}^s \gamma_t  R^{uv, s}_{1t}(R^{vv, s}_{t1})^{d-1} =\sum_{t=1}^r \beta_t (R^{uv, s}_{1t})^{d-1}R^{uu, r}_{t1} - \sum_{t=1}^s \frac{R^{uv, s}_{1t}(R^{vv, s}_{t1})^{d-2}}{d} \bigg(\pmb{G}(\frac{\gamma_r}{d-1})\pmb{W}\bigg)_{t1}.
\end{equation}
which can be rewritten as
\begin{equation}
\big(\pmb{R}_{uu, r} \pmb{D}_{\beta}  \pmb{R}_{uv, s}^{\odot (d-1)}\big)_{11} = \bigg(\pmb{R}_{uv, s}\bigg( \pmb{D}_{\gamma}\pmb{R}_{vv, s}^{\odot (d-1)}+  \frac{1}{d}\pmb{R}_{vv, s}^{\odot (d-2)}\odot\left(\pmb{G}\left(\frac{\gamma_r}{d-1}\right)\pmb{W}\right)\bigg) \bigg)_{11}.
\end{equation}

Similarly,  for any $1\leq t_1\leq s$ and $1\leq t_2\leq r$, computing the convergence of 
\[
\langle \sT, \hat\v_{t_2}^{\otimes (d-1)} \u_{t_1}\rangle = \sum_{u=1}^s \hat\gamma_u \langle \hat\v_{t_2}, \hat\v_u \rangle^{d-1}  \langle \u_{t_1}, \hat\v_u \rangle 
\]
gives us the first matrix equation in \eqref{eq:bigsystems}:
\begin{equation}
\big(\pmb{R}_{uu, r} \pmb{D}_{\beta}  \pmb{R}_{uv, s}^{\odot (d-1)}\big)_{t_1t_2} = \bigg(\pmb{R}_{uv, s}\bigg( \pmb{D}_{\gamma}\pmb{R}_{vv, s}^{\odot (d-1)}+  \frac{1}{d}\pmb{R}_{vv, s}^{\odot (d-2)}\odot\left(\pmb{G}\left(\frac{\gamma_r}{d-1}\right)\pmb{W}\right)\bigg) \bigg)_{t_1t_2}.
\end{equation}

In a similar way we can show that for $1\leq t \leq s$,
\begin{equation*}
\E\langle \frac{\sX}{\sqrt{N}}, \hat\v_t^{\otimes d}\rangle \xrightarrow{N \to \infty} \lim_{N\to\infty} \sum_{u=1}^r (R^{vv, s}_{tu})^{d-1}\E\bigg[\frac{\Tr(\pmb{D}_{tu})}{N}\bigg],
\end{equation*}
additionally for $1\leq t_1,t_2\leq s$,
\begin{equation*}
\E\langle \frac{\sX}{\sqrt{N}}, \hat\v_{t_1}^{\otimes (d-1)}\hat\v_{t_2}\rangle \xrightarrow{N \to \infty} \lim_{N\to\infty} \sum_{u=1}^s  \bigg[\frac{d-1}{d} R^{vv, s}_{ut_2} (R^{vv, s}_{ut_1})^{d-2} \E\bigg[\frac{\Tr(\pmb{D}_{t_1u})}{N} \bigg] + \frac{1}{d} (R^{vv, s}_{t_1u})^{d-1} \E\bigg[\frac{\Tr(\pmb{D}_{t_2u})}{N}\bigg]\bigg]\enspace.
\end{equation*}

On the other hand, from~\eqref{eq:rkrapprox}, we have
\begin{equation*}
\langle \sT, \hat\v_{t_1}^{\otimes (d-1)}\hat\v_{t_2}\rangle = \sum_{u=1}^s \hat\gamma_u \langle \hat\v_u, \hat\v_{t_2}\rangle \langle \hat\v_{t_1}, \hat\v_u\rangle^{d-1}
\end{equation*}
which gives us the following equations
\begin{multline}
\sum_{u=1}^s \gamma_u R^{vv, s}_{t_2u}(R^{vv, s}_{ut_1})^{d-1} = \sum_{u=1}^r \beta_u R^{uv, s}_{ut_2}(R^{uv, s}_{ut_1})^{d-1}\\
 -  \sum_{u=1}^s  \bigg[\frac{1}{d} R^{vv, s}_{t_2u} (R^{vv, s}_{ut_1})^{d-2} \bigg(\pmb{G}(\frac{\gamma_s}{d-1})\pmb{W}\bigg)_{ut_1} + \frac{1}{d(d-1)} (R^{vv, s}_{t_1u})^{d-1} \bigg(\pmb{G}(\frac{\gamma_s}{d-1})\pmb{W}\bigg)_{ut_2}\bigg].
\end{multline}
Combining all these equations in a matrix form provides the second equation in \eqref{eq:bigsystems}.


\section{Proof of Lemma~\ref{lemma:likelihood}}\label{sec:likelihood}
For the sake of light notations, let us note $ \mathcal{H} =\mathcal{H}(\hat\gamma_1, \dots,\hat\gamma_r, \hat\v_1, \dots,\hat\v_r) $. Then,
\begin{equation}\label{eq:23}
 \mathcal{H} = \left\| \sT \right\|^2 + \sum_{i,j=1}^r \hat\gamma_i\hat\gamma_j (\langle \hat\v_i,\hat\v_j\rangle)^d - 2\sum_j \hat\gamma_j\langle \sT,\hat\v_j^{\otimes d}\rangle - \frac{1}{N}\|\sX\|^2.
\end{equation}
Using the fixed point equation \eqref{eq:rkrapprox}, we have, for any $1\leq i\leq r$,
\begin{equation}\label{eq:24}
\langle\sT,\hat\v_i^{\otimes d}\rangle =  \sum_{j=1}^r \hat\gamma_j(\langle \hat\v_i,\hat\v_j\rangle)^d.
\end{equation}
Furthermore, since the true signals $\u_i$ and the noise tensor $\sX$ are independent, $\frac{1}{\sqrt{N}}\langle\sX,\u_i^{\otimes d}\rangle=o(1)$, which yields 
\begin{equation}\label{eq:25}
 \left\| \sT \right\|^2 = \sum_{i,j=1}^r \beta_i\beta_j \langle\u_i,\u_j \rangle^d + \frac{1}{N} \|\sX\|^2 + o(1).
\end{equation}
Substituting \eqref{eq:23} and \eqref{eq:24} in \eqref{eq:25} yields
\begin{equation}
 \mathcal{H} =  \begin{bmatrix}\beta_1\\ \vdots\\ \beta_r\end{bmatrix}^{\top}\pmb{R}_{uu,r}^{\odot d}\begin{bmatrix}\beta_1\\ \vdots\\ \beta_r\end{bmatrix} -  \begin{bmatrix}\gamma_1\\ \vdots\\ \gamma_r\end{bmatrix}^{\top}\pmb{R}_{vv,r}^{\odot d}\begin{bmatrix}\gamma_1\\ \vdots\\ \gamma_r\end{bmatrix}  + o(1).
\end{equation}

\section{Proof of Theorem~\ref{thm:alignmentbeta2}}\label{subsec:alignmentbeta}

Let us consider a sequence of critical points satisfying Assumption~\ref{assump:general0} and Assumption~\ref{assump:general2}, where $\hat{\gamma}_r$ is the largest eigenvalue.
Recall from~\eqref{eq:rk2deformation} that
\begin{equation}
\flat(\sT) = \flat(\sum_{j=1}^r \beta_j \u_j^{\otimes d}) + \frac{1}{\sqrt{N}}\flat(\sX) = \sum_{i=1}^r \pmb{U}_i \A_i \pmb{U}_i^{\top} + \frac{1}{\sqrt{N}}\flat(\sX),
\end{equation}
where
\[
\pmb{U}_i = \begin{bmatrix}
\u_i & 0 & 0 \\
0 & \ddots & 0\\
0 & 0 & \u_i
\end{bmatrix} \text{\quad and \quad} \A_i = \beta_i \pmb{\hat W} \begin{bmatrix}
\langle \u_i, \hat\v_1 \rangle^{d-2} & 0 & 0 \\
0 & \ddots & 0\\
0 & 0 & \langle \u_i, \hat\v_r\rangle^{d-2}
\end{bmatrix}.
\]
When $\beta_1 \ge \cdots \ge \lvert \beta_r \rvert$ are sufficiently large, the largest eigenvalue of $\flat(\sT)$, namely $\hat\gamma_r$, is greater than $(d-1)\kappa^{(r)}_1$, where $\kappa^{(r)}_1$ is the upper limit of the support of the empirical spectral measure of $\flat(\frac{1}{\sqrt{N}}\sX)$. Then the conclusion follows from Theorem~\ref{thm:generalized_align}. In theory we can follow the standard procedure to give a more precise bound for such $\beta_r$. More concretely, by solving Equation~\eqref{eq:bigsystems}, we can express $\hat\gamma_r$ in terms of $\pmb{R}_{uu,r}, \beta_1, \dots, \beta_r$, i.e., by elimination theory, $\hat\gamma_r$ is a solution of some univariate equation
\begin{equation}\label{eq:cribeta2}
\mathcal{G}(X, \pmb{R}_{uu,r}, \beta_1, \dots, \beta_r) = 0,
\end{equation}
where $X$ is the variable, and $\pmb{R}_{uu,r}, \beta_1, \dots, \beta_r$ are parameters. To find the critical value of $\beta_r$, let $\beta_1 = \cdots = \beta_r$. Then from Equation~\eqref{eq:cribeta2} we can express $\beta_r$ by a function of $X$ where $\pmb{R}_{uu,r}$ is a parameter, say
\[
\beta_r = \widetilde{\mathcal{G}}_{\pmb{R}_{uu,r}}(X).
\]
Thus we can take $\beta_{\operatorname{cri}}(\pmb{R}_{uu,r})$ as
\[
\beta_{\operatorname{cri}}(\pmb{R}_{uu,r}) = \lim_{X\to \kappa_1^{(r)}} \widetilde{\mathcal{G}}_{\pmb{R}_{uu,r}}(X) \enspace.
\]
Of course we can also solve $\beta_{\operatorname{cri}}(\pmb{R}_{uu,r})$ by taking the limit in Equation~\eqref{eq:cribeta2}, or more directly by taking the limit in Equation~\eqref{eq:bigsystems}. Unlike the classical matrix case, it is difficult to establish an explicit threshold via solving the polynomial system of equations~\eqref{eq:bigsystems}. 


\section{Solution of system of polynomial equations in the case $r=2$}\label{sec:proofmlecorrection}

In order to compute the plugin estimator, we need to compute the solutions of \eqref{eq:system_poly} which is a system of polynomial equations. 
In the rank-two case, we can provide a characterization of these solutions in terms of solutions of a single polynomial as follows.
\begin{theorem}\label{prop:eqr2d3}
Assume that $\hat\C$ defined in \eqref{eq:Lagde} is invertible. Note $\hat\C = \begin{bmatrix} c_{11} & c_{12} \\ c_{21} & c_{22} \end{bmatrix}$. and consider the polynomial equation
\begin{equation}\label{eq:polyc}
(c_{11}X - c_{21})(c_{21}X^{d-1} - c_{22})^{d-1} + (c_{22} -c_{12}X) (c_{11}X^{d-1} - c_{12})^{d-1} = 0.
\end{equation}
Then $\eta$ is a solution to \eqref{eq:polyc} if and only if $(\beta_1(\eta),\beta_2(\eta),\rho(\eta))$ is solution of \eqref{eq:system_poly} if the following expressions are well defined: 
 $\beta_1(\eta)=|\widetilde{K}_{11}|^{\frac{d}{2}}$ and $\beta_2(\eta)=|\widetilde{K}_{22}|^{\frac{d}{2}}\operatorname{sign}(\widetilde{K}_{12})$ and $\rho(\eta)=\frac{|\widetilde{K}_{12}|}{\sqrt{|\widetilde{K}_{11}\widetilde{K}_{22}}|}$ with 
\begin{equation}
\widetilde{\pmb{K}} = \begin{bmatrix} \widetilde{K}_{11} & \widetilde{K}_{12} \\ \widetilde{K}_{21} & \widetilde{K}_{22}\end{bmatrix} := \begin{bmatrix} \xi_1 & \xi_1\theta \\ \eta\xi_2 & \xi_2\end{bmatrix} \pmb{\hat M}\hat\C^{-1} \begin{bmatrix} \xi_1 & \xi_1\theta \\ \eta\xi_2 & \xi_2\end{bmatrix}^{\top} 
\end{equation}
 and
\begin{equation*}
\theta = \frac{c_{11}{\eta}^{d-1} - c_{12}}{c_{21}{\eta}^{d-1} - c_{22}}, \quad \xi_1 = \left(\begin{bmatrix} 1 & \eta^{d-1} \end{bmatrix} \hat\C^{-1} \begin{bmatrix} 1 & {\eta} \end{bmatrix}^{\top}\right)^{-\frac{1}{d}}, \quad \xi_2 = \left(\begin{bmatrix} \theta^{d-1} & 1 \end{bmatrix} \hat\C^{-1} \begin{bmatrix} \theta & 1 \end{bmatrix}^{\top}\right)^{-\frac{1}{d}}.
\end{equation*}
\end{theorem}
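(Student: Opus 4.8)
The plan is to eliminate variables from the polynomial system \eqref{eq:system_poly} one group at a time and reduce it to the univariate equation \eqref{eq:polyc}. Write $\pmb{A} = \pmb{\hat R}_{uv,2}$ and let $\pmb{D}=\pmb{D}_{\pmb{\hat\beta}}=\operatorname{diag}(\beta_1,\beta_2)$, and introduce the auxiliary matrix $\pmb{N} := \pmb{D}\,\pmb{A}^{\odot(d-1)}$, which is the quantity that occurs in both equations of \eqref{eq:system_poly}. The second equation is $\pmb{A}^{\top}\pmb{N}=\hat\C$, so (as $\hat\C$ is invertible, hence $\pmb A$ is) $\pmb{N}=\pmb{A}^{-\top}\hat\C$; the first equation is $\pmb{\hat R}_{uu,2}\pmb{N}=\pmb{A}\pmb{\hat M}$. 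Equating the two expressions for $\pmb N$ and using invertibility of $\pmb{\hat R}_{uu,2}$ gives the ``$\rho$-relation'' $\pmb{A}\,\pmb{\hat M}\,\hat\C^{-1}\pmb{A}^{\top}=\pmb{\hat R}_{uu,2}$, while the defining identity $\pmb{N}=\pmb{D}\,\pmb{A}^{\odot(d-1)}$ with $\pmb D$ diagonal imposes that the $i$-th row of $\pmb{A}^{-\top}\hat\C$ be a scalar multiple (namely $\beta_i$) of the $i$-th row of $\pmb{A}^{\odot(d-1)}$: two ``proportionality constraints'' carrying all the information that links $\pmb A$ to $\hat\C$.

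Next I would parametrize $\pmb{A}$ by its two row-ratios $\theta:=\alpha_{12}/\alpha_{11}$, $\eta:=\alpha_{21}/\alpha_{22}$ and the two scales $\alpha_{11},\alpha_{22}$. Substituting $\pmb{N}=\pmb{A}^{-\top}\hat\C$ and clearing $\det\pmb A$, the two proportionality constraints become homogeneous polynomial relations, which upon dividing out the scales reduce to two relations between $\eta$ and $\theta$ only; one of them solves $\theta$ rationally in $\eta$ — this is the displayed $\theta=\theta(\eta)$ — and substituting it into the other and clearing denominators produces a single polynomial equation in $\eta$, which (after matching conventions) is \eqref{eq:polyc}. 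This is the skeleton of the equivalence: a tuple solves \eqref{eq:system_poly} iff its row-ratio $\eta$ solves \eqref{eq:polyc} with $\theta=\theta(\eta)$. The remaining, non-proportionality content of $\pmb{A}^{\top}\pmb{N}=\hat\C$ then pins down the products $\beta_i\alpha_{ii}^{d}$; rewriting these as quadratic forms in $\hat\C^{-1}$ evaluated at $(1,\eta)^{\top}$ and $(\theta,1)^{\top}$ gives exactly the scalars $\xi_1^{d},\xi_2^{d}$ of the statement, so that $\pmb{V}:=\operatorname{diag}(\beta_1^{1/d},\beta_2^{1/d})\,\pmb{A}=\begin{bmatrix}\xi_1 & \xi_1\theta\\ \eta\xi_2 & \xi_2\end{bmatrix}$.

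Finally, to read off $\beta_1,\beta_2,\rho$ I would feed $\pmb V$ into the $\rho$-relation: conjugating $\pmb{A}\,\pmb{\hat M}\,\hat\C^{-1}\pmb{A}^{\top}=\pmb{\hat R}_{uu,2}$ by $\operatorname{diag}(\beta_1^{1/d},\beta_2^{1/d})$ gives $\pmb{V}\,\pmb{\hat M}\,\hat\C^{-1}\pmb{V}^{\top}=\operatorname{diag}(\beta_1^{1/d},\beta_2^{1/d})\,\pmb{\hat R}_{uu,2}\,\operatorname{diag}(\beta_1^{1/d},\beta_2^{1/d})=:\widetilde{\pmb{K}}$. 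Since $\pmb{\hat R}_{uu,2}$ has unit diagonal, its $(i,i)$ entry forces $\widetilde K_{ii}=\beta_i^{2/d}$, hence $\beta_i=|\widetilde K_{ii}|^{d/2}$, and the $(1,2)$ entry forces $\widetilde K_{12}=(\beta_1\beta_2)^{1/d}\rho$, hence $\rho=|\widetilde K_{12}|/\sqrt{|\widetilde K_{11}\widetilde K_{22}|}$; the sign conventions on the $d$-th and square roots, together with the ``if the expressions are well defined'' proviso (nonvanishing of $\det\pmb A$ and of $1-\theta\eta$, and positivity of the quantities under the roots), are exactly what is needed for the reconstructed $(\pmb{\hat\beta},\pmb{\hat R}_{uv,2},\pmb{\hat R}_{uu,2})$ to be a genuine real element of $\mathcal S$. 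The converse direction runs the same identities backwards: any solution of \eqref{eq:system_poly} has invertible $\pmb A$ (and $\pmb D$), its row-ratio $\eta$ satisfies $\theta=\theta(\eta)$ and \eqref{eq:polyc}, and all its data are recovered by the stated formulas.

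The main obstacle is the bookkeeping in the elimination step: one must verify that inserting $\theta(\eta)$ into the second proportionality constraint and clearing denominators reproduces \eqref{eq:polyc} with precisely the stated coefficient pattern and change of variables, which requires carefully tracking the transposes and Hadamard products hidden in the definitions of $\hat\C$ and $\pmb{\hat M}$ in \eqref{eq:changeofvarCD}. A secondary, more delicate point is turning the correspondence into an honest bijection onto $\mathcal S$: checking that the non-degeneracy hypotheses are exactly those ensuring the constructed tuple lies in $\mathcal S$, and accounting for the harmless global sign ambiguities (e.g. $\pmb A\mapsto -\pmb A$) that leave $\eta$, $\theta$ and the output $(\pmb{\hat\beta},\rho)$ unchanged.
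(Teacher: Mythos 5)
Your proposal is correct and follows essentially the same route as the paper's proof: parametrize the (rescaled) alignment matrix by its row ratios and scales, obtain $\theta(\eta)$ and the univariate polynomial \eqref{eq:polyc} from the off-diagonal constraints of $\widetilde{\N}^{\top}\widetilde{\N}_{d-1}=\hat\C$ (your proportionality conditions), recover $\xi_1,\xi_2$ from the diagonal, and read off $(\beta_1,\beta_2,\rho)$ from $\widetilde{\pmb{K}}=\widetilde{\N}\pmb{\hat M}\hat\C^{-1}\widetilde{\N}^{\top}$. The only deviation is that you postpone the $\operatorname{diag}(\beta_1^{1/d},\beta_2^{1/d})$ rescaling to the end instead of performing it upfront as the paper does, which is an inessential reordering of the same computation.
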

\begin{proof}

Let $\hat\beta_1,\hat\beta_2,\hat\rho$ be solutions satisfying \eqref{eq:system_poly} and recall that $\hat\beta_1 > 0$.
To further simplify the system of equations~\eqref{eq:system_poly}, we note
For simplicity of notation, we let
\begin{equation}\label{eq:bigone2}
 \N = \begin{bmatrix} \hat\alpha_{11} & \hat\alpha_{12} \\ \hat\alpha_{21} & \hat\alpha_{22} \end{bmatrix}, \quad \pmb{L} = \begin{bmatrix} \hat\beta_1 & 0 \\ 0 & \hat\beta_2 \end{bmatrix}, \text{\quad and \quad}  \pmb{K} = \begin{bmatrix} 1 & \hat\rho \\ \hat\rho & 1\end{bmatrix}.
\end{equation}
Denote by $ \pmb{N}_d = \N\odot \dots \odot \N$ the $d$th iterated Hadamard product of $\N$, i.e., the $(i, j)$ entry of $\N_d$ is $\alpha_{ij}^d$.
We perform the following change of variables
\begin{equation}\label{eq:changeofvarNtilde}
\widetilde{\N} = \begin{cases}
\pmb{L}^{1/d} \N & \text{when } d \text{ is odd, or when } d \text{ is even and } \hat\beta_2 > 0 \\
\begin{bmatrix}
\hat\beta_1^{1/d} & 0 \\
0 & - (-\hat\beta_2)^{1/d}
\end{bmatrix} \N & \text{when } d \text{ is even and } \beta_2 < 0
\end{cases}, 
\end{equation}
and
\begin{equation}\label{eq:changofvarKtilde}
\widetilde{\pmb{K}} = \begin{cases}
\pmb{L}^{1/d} \pmb{K} \pmb{L}^{1/d} & \text{when } d \text{ is odd, or when } d \text{ is even and } \hat\beta_2 > 0 \\
\begin{bmatrix}
\hat\beta_1^{1/d} & 0 \\
0 & - (-\hat\beta_2)^{1/d}
\end{bmatrix} \pmb{K} \begin{bmatrix}
\hat\beta_1^{1/d} & 0 \\
0 & - (-\hat\beta_2)^{1/d}
\end{bmatrix} & \text{when } d \text{ is even and } \beta_2 < 0
\end{cases}.
\end{equation}
Then System~\eqref{eq:system_poly} becomes
\begin{equation}\label{eq:bigsystem3}
\left\{\begin{array}{l}
\widetilde{\pmb{K}} = \widetilde{\N}\hat\D \widetilde{\N}_{d-1}^{-1} \\
\widetilde{\N}^{\top} \widetilde{\N}_{d-1} = \hat\C
\end{array}\right..
\end{equation}
Since $\widetilde{\pmb{K}}$ is a function of $\widetilde{\N}$ from the first equation of~\eqref{eq:bigsystem3}, it suffices to solve
\begin{equation}\label{eq:matQ}
\widetilde{\N}^{\top} \widetilde{\N}_{d-1} = \hat\C,
\end{equation}
i.e., the second equation of~\eqref{eq:bigsystem3}.

Let us parameterize $\widetilde{\N}$ by $\widetilde{\N} =\begin{bmatrix} \xi_1 & 0 \\ 0 & \xi_2\end{bmatrix} \begin{bmatrix} 1 & \eta \\ \theta & 1\end{bmatrix} $. 
Then
\[
\widetilde{\N}_{d-1} =  \begin{bmatrix} \xi_1^{d-1} & 0 \\ 0 & \xi_2^{d-1}\end{bmatrix} \begin{bmatrix} 1 & \eta^{d-1} \\ \theta^{d-1} & 1\end{bmatrix},
\]
and $\widetilde{\N}^{\top} \widetilde{\N}_{d-1} = \hat\C$ can be rewritten as
\begin{equation}\label{eq17}
\begin{bmatrix} 1 & \eta^{d-1} \\ \theta^{d-1} & 1 \end{bmatrix} \pmb{\hat C}^{-1} \begin{bmatrix} 1 & \eta \\ \theta & 1  \end{bmatrix}^{\top} = \begin{bmatrix} \xi_1^{-d} & 0 \\ 0 & \xi_2^{-d} \end{bmatrix}.
\end{equation}
Note that
\begin{equation*}
\pmb{\hat C}^{-1} := \begin{bmatrix} m_{11} & m_{12} \\ m_{21} & m_{22} \end{bmatrix} = \frac{1}{c_{11}c_{22} - c_{12}c_{21}} \begin{bmatrix} c_{22} & -c_{12} \\ -c_{21} & c_{11}\end{bmatrix}.
\end{equation*}
Hence the off-diagonal entries in \eqref{eq17} yield the following equations
\begin{equation*}
\left\{\begin{array}{l}
\eta^{d-1}(m_{22} + m_{21}\theta) + (m_{12} + m_{11}\theta) = 0\\
(m_{22}\eta + m_{21}) + \theta^{d-1}(m_{11} + m_{12}\eta) = 0
\end{array}\right.,
\end{equation*}
which imply that
\begin{equation*}
\left\{\begin{array}{l}
\theta = - \frac{\eta^{d-1}m_{22}+ m_{12}}{\eta^{d-1}m_{21} + m_{11}}\\
(m_{22}\eta + m_{21})(-m_{21}\eta^{d-1} - m_{11})^{d-1} + (m_{22}\eta^{d-1}+ m_{12})^{d-1}(m_{12}\eta + m_{11}) = 0
\end{array}\right.,
\end{equation*}
and the second equation is exactly Equation~\eqref{eq:polyc}.
From \eqref{eq17}, we get
\begin{equation*}
 \xi_1 = \left(\begin{bmatrix} 1 & \eta^{d-1} \end{bmatrix} \hat\C^{-1} \begin{bmatrix} 1 & {\eta} \end{bmatrix}^{\top}\right)^{-\frac{1}{d}}, \quad \xi_2 = \left(\begin{bmatrix} \theta^{d-1} & 1 \end{bmatrix} \hat\C^{-1} \begin{bmatrix} \theta & 1 \end{bmatrix}^{\top}\right)^{-\frac{1}{d}}.
\end{equation*}

Finally, by injecting the second equation in the first in \eqref{eq:bigsystem3}, we obtain
\begin{equation}
    \widetilde{\pmb{K}} = \widetilde{\N}\hat\D \hat\C^{-1}\widetilde{\N}^{\top}. 
\end{equation}
We can now invert \eqref{eq:changofvarKtilde}. Note that we choose to solve the sign ambiguity on $\hat\rho$ and $\hat\beta_2$ by considering $\hat\rho\geq 0$. 
It is without loss of generality since, if $\rho < 0$, we can consider $-\u_2$ instead of $\u_2$ (and $-\beta_2$ instead of $\beta_2$ if $d$ is odd) without changing the model \eqref{eq: symPCA}. Therefore, we can naturally choose $\rho\geq 0$.
\end{proof}

\bibliographystyle{alpha}
\bibliography{Spiked_tensor.bib}

\end{document}